\makeatletter \@addtoreset{equation}{section} \makeatother
\renewcommand\thefigure{\thesection.\@arabic\c@figure}
\renewcommand\thetable{\thesection.\@arabic\c@table}
\newtheorem{theorem}{Theorem}[section]
\newtheorem{lemma}[theorem]{Lemma}
\newtheorem{proposition}[theorem]{Proposition}
\newtheorem{corollary}[theorem]{Corollary}
\theoremstyle{definition}
\theoremstyle{remark}
\newtheorem{remark}[theorem]{Remark}
\newcommand{\mc}[1]{{\mathcal #1}}
\newcommand{\mb}[1]{{\mathbf #1}}
\newcommand{\bb}[1]{{\mathbb #1}}
\newcommand{\<}{\langle}
\renewcommand{\>}{\rangle}
\DeclareMathOperator{\Geom}{Geom}
\date{}
\title{Lattice model for Fast Diffusion Equation}
\author{F.~Hern\'andez, M.~Jara, F.~Valentim}
\subjclass[2010]{60H30, 60J60, 82C22 }
\keywords{Fast Diffusion Equation, Zero-Range, Relative Entropy Method}  
\address{\noindent 
Instituto de Matem\'atica, Universidade Federal Fluminense, Rua M\'ario Santos Braga S/N,
Niter\'oi, RJ 24020-140, Brazil
\newline
e-mail: \rm \texttt{freddyhernandez@id.uff.br}}
\address{\noindent IMPA, Estrada Dona Castorina 110, Rio de Janeiro, RJ, 22460-320, Brazil.
\newline
e-mail: \rm \texttt{mjara@impa.br}}
\address{
\noindent
Departamento de Matem\'atica, Universidade Federal do Esp\'irito Santo, Av. Fernando Ferrari, 514, Goiabeiras, Vit\'oria, 29075-910, Brazil.
\newline
e-mail \rm\texttt{fabio.valentim@ufes.br}
}
\begin{document}

\begin{abstract}

We obtain a \emph{fast diffusion equation} (FDE) as scaling limit of a sequence of zero-range process with symmetric unit rate. 
Fast diffusion effect comes from the fact that the diffusion coefficient 
goes to infinity as the density goes to zero. Therefore, in order to capture 
the behaviour for an arbitrary small density of particles,
we consider a proper rescaling of a model with a typically high number of particles per site.
 Furthermore, we obtain some results on the convergence for the method of lines for FDE.
\end{abstract}

\maketitle
\section{introduction}
Consider the one dimensional Cauchy problem
\begin{equation}
\label{fastm}
\left\{
\begin{array}{l}
{\displaystyle \partial_t u  \; =\; \partial_x ( u^{m-1} \partial_x \;  u ),
} 
\\
{\displaystyle u(0,\cdot) \;=\; u_0 (\cdot)},
\end{array}
\right.
\end{equation}
where $m \in \mathbb R$ and $u_0$ is an initial data. Note that cases $m=1$ and $m>1$ correspond to the linear  heat equation and porous medium equation, respectively. 

In case $m<1$, equation \eqref{fastm} is called \emph{fast diffusion equation} (FDE). This name comes from the fact that the diffusion coefficient  goes to infinity as the density goes to zero.  This equation has been extensively studied in the literature and arises in a number of different physical applications, see \cite{dk} and the references therein. For instance, when $m<0$, equation \eqref{fastm} provides a model for diffusion in plasma \cite{bh, lh}, appears in the study of cellular automata and interacting particle systems with self-organized criticality \cite{cv} and also describes a plane curve shrinking along the normal vector with speed depending on the curvature \cite{ew, gh}.

 Microscopic derivations of the heat and porous medium equations have already been obtained, see  \cite{EkhSep}, \cite{FenIscSep}, \cite{GonLanTon}  and \cite{KipLan}, for instance. 
  Here we provide a derivation of the hydrodynamic limit for a fast diffusion equation.
  We will restrict our attention to the case $m=-1$, imposing periodic boundary conditions as well as uniform positive initial data, namely,
\begin{equation}
\label{fast}
\left\{
\begin{array}{l}
{\displaystyle \partial_t u  \; =\;  - \Delta (1/u),
} 
\\
{\displaystyle u(0,\cdot) \;=\; u_0 (\cdot)}.
\end{array}
\right.
\end{equation}
where $u_0: \mathbb T \to \mathbb [\epsilon, \epsilon^{-1}]$ for some $\epsilon > 0$.
The purpose of this article is to define a family of conservative interacting particle systems whose
macroscopic density profile evolves according to the partial differential equation \eqref{fast}. Equilibrium fluctuations for this family of systems are studied in \cite{HJV}.

 It is well known, see for instance \cite{KipLan}, that the equation
 \begin{equation}
\label{NLheat}
\left\{
\begin{array}{l}
\partial_t u = \Delta \phi(u),\\
u(0,\cdot) = u_0(\cdot),\\
\end{array}
\right.
\end{equation}
where $\phi(\rho) = {\rho}/{(1+\rho)}$, can be obtained as a diffusive scaling limit of a zero-range process with symmetric unit rate, $g(k)=1$, if $k>0$ and $g(k)=0$ otherwise. A formal description of this process is the following.  Particles live on $\bb T_n$, the discrete one-dimensional torus with $n$ points. At each site of the lattice $\bb T_n$ there is a Poissonian clock of rate $2$. Each time the clock at site $x \in \bb T_n$ rings, one of the particles at this site moves to $x-1$ or $x+1$ with equal probability. We denote by 
$\eta = (\eta(x); x \in \bb T_n)$ the particle configurations and say that $\eta(x) \in \bb N_0$ is the  number of particles at site $x$ according to configuration $\eta$. This process is a continuous-time Markov chain and we denote it  by $\{\eta_t^n; t \geq 0\}$ and  by $\{S_n(t); t \geq 0\}$ the semi-group associated. The aforementioned dynamic conserves the number of particles and also is known to have a family of stationary product measures (\cite{Spi}), which can be indexed by the particle
density $\rho$ and denoted by $\bar\mu^n_{\rho}$.

A  rigorous derivation of the hydrodynamic equation \eqref{NLheat} can be obtained by means of  Yau's relative entropy method (\cite{OllVarYau}, \cite{Yau}). The general idea of this method is to show that the entropy $H_n(\mu^n S_n(tn^{2})| \bar\mu_{u(t,\cdot)}^n)$ between the  law of the process  $\{\eta_t^n; t \geq 0\}$ at time $t$, starting from  a fixed probability measure $\mu^n$, and the product measure with slowly varying parameter associated to the solution of the hydrodynamic equation, is relatively small. As a matter of fact, convergence of the empirical density, namely, 
\begin{equation}
\label{HL1}
\lim_{n \to \infty} \frac{1}{n} \sum_{x \in \bb T_n} \eta_t^n(x) F\big(\tfrac{x}{n}\big) = \int_{\bb T} u(t,x) F(x) dx,
\end{equation}
 can be derived from $H_n(\mu^n S_n(tn^{2})| \bar\mu_{u(t,\cdot)}^n) = \omicron(n)$, by using the entropy inequality. The validity of \eqref{HL1} for any time $t$ and any $F: \bb T \to \bb R$ continuous, is taken in many cases as the definition of hydrodynamic limit of a particle system.
 
It is worth pointing out that, although the hydrodynamic limit result mentioned above is valid for a wide class of rate functions $g$, the corresponding nonlinearity, namely, $ \Phi(\rho)=\mathbb E_{\bar\mu_{\rho}}[g(\eta(0))]$,  will always be an analytical function. In particular, if $m<1$ we can not obtain equation \eqref{fastm} as hydrodynamic limit from any zero-range process in the usual way. On the other hand, notice that we can produce the nonlinearity appearing in equation \eqref{fast} by fixing $\alpha \in (0,\infty)$, defining $\phi_n(\rho) = n^\alpha \phi(n^\alpha \rho)$ and noting that
$ \displaystyle
\lim_{n \to \infty} \big( \phi_n(\rho) -n^\alpha\big) = -\rho^{-1}.
$
This can be seen as a motivation to consider a family of processes with a typically large number of particles per site.

Let us denote by $\{u_t^n(x); t \geq 0, x \in \bb T_n\}$  the solution of the system of ODE's
\begin{equation*}
\tfrac{d}{dt} u_t^n(x) = \Delta_n \phi_n(u_t^n(x))
\end{equation*}
where $u_0^n$ is  initial data and $\Delta_n$ is the discrete Laplacian. We will show in Section \ref{s5} that there exists $T>0$ such that  the sequence $\{u^n\}_{n \in \mathbb N}$ uniformly converges on $[0,T]$ to the solution of \eqref{fast}.
Our main result, which is the content of Theorem \ref{t3}, is then obtained by showing that 
 \begin{equation*}
 H_n(\mu^n S_n(tn^{2+2\alpha})| \bar\mu_{n^{\alpha}u^n_t(\cdot)}^n) = \omicron(n), \ \ \text{for} \ \ t \in [0,T]. 
 \end{equation*}
In analogy to \eqref{HL1} it can be obtained, as a byproduct of Theorem \ref{t3}  and the uniform convergence of  the  sequence $\{u^n\}_{n \in \mathbb N}$, that
\begin{equation*}
\lim_{n \to \infty} \frac{1}{n^{1+\alpha}} \sum_{x \in \bb T_n} \eta_t^n(x) F\big(\tfrac{x}{n}\big) = \int_{\bb T} u(t,x) F(x) dx,
\end{equation*}
where $u$ is the solution of \eqref{fast}. 
Formally, we are giving a mass $n^{-(1+\alpha)}$ to each particle and putting a larger number of particles, of order $n^{\alpha}$, into the system. Observe that the hydrodynamic limit \eqref{HL1} corresponds to $\alpha =0$, on which case each particle has a mass ${n^{-1}}$.

One of the crucial ingredients needed in the proof of the hydrodynamic limit is the so-called {\em one-block estimate}, which in the case of zero-range processes, roughly states that spatial averages of $g$ over large microscopic boxes are asymptotically equivalent to the function $\Phi$, evaluated at the average number of particles. 
The main difficulty in carrying out this program in the present situation is that the density of particles per site  grows as a power of $n$. Because of that, a key compactness argument in the classical proof of the one-block estimate does not work. Here we get, following the approach proposed in \cite{JarLanSet} to establish a \emph{local replacement} limit, a quantitative proof of the one-block estimate which allows to circumvent the compactness argument by the use of the so-called {\em spectral gap inequality}, which gives a sharp bound on the largest eigenvalue of the dynamics restricted to a finite box. We strongly rely on the  spectral gap estimate for the zero-range process, obtained in \cite{Mor}. 

 Another important step in the application of the relative entropy method relies on the Laplace-Varadhan Theorem and some large deviations arguments for i.i.d. random variables. In the present setting, this part of the proof also differs from the usual one because of the dependence on $n$ of the parameters of the product measures $\bar\mu_{n^{\alpha}u^n_t(\cdot)}^n$. However, inspired in the well known fact that, in a proper scaling,  geometric distribution converges to an exponential distribution, we obtain some concentration inequalities which allow us to follow an alternative approach for this step. 

The article is organised as follows. 
In Section \ref{s1} we review some of the standard facts on hydrodynamic limit of zero-range processes, establish the notation and state the main results of the article.
Relying on the results obtained in the subsequent sections, we prove in Section \ref{s2} the main result of the
article following Yau's relative entropy method. The proof of the one-block  estimate as well as a concentration inequality
needed for the entropy method are the contents of Sections \ref{s4} and \ref{concentration}, respectively. Finally, Section \ref{s5} is dedicated to the study of convergence of the above mentioned discrete approximation for the FDE.

\section{Notation and results}
\label{s1}
\subsection{The model}
\label{s1.1.}
For each $n \in \bb N$,\footnote{
We will use the conventions 
$\bb N = \{1,2,3,\dots\}$ and $\bb N_0= \{0,1,2,\dots\}$
}
let $\bb T_n = \bb Z / n \bb Z$ be the discrete circle with $n$ points. We will think about $\bb T_n$ as a discrete approximation of the continuous circle $\bb T = \bb R / \bb Z$. Therefore, the parameter $n$ can be understood as a spatial scaling. The discrete circle $\bb T_n$ can be embedded into $\bb T$ by means of the canonical embedding $x \mapsto \frac{x}{n}$.

Let $\Omega_n = \bb N_0^{\bb T_n}$ be the state space of a continuous-time Markov chain to be described below. We denote by $\eta = (\eta(x); x \in \bb T_n)$ the elements of $\Omega_n$ and we call them {\em configurations}. We call the elements $x \in \bb T_n$ {\em sites} and we say that $\eta(x)$ is the {\em number of particles} at site $x$ according to configuration $\eta$. Define $g: \bb N_0 \to \bb R$ as $g(\ell) = 1$ for $\ell\neq 0$ and $g(0)=0$. Let $p(x,y); x,y \in \bb T_n$ be the transition rate of a simple symmetric random walk on $\bb T_n$, namely $p(x,y) = \mathbf{1} (|y-x|=1)$ for any $x,y \in \bb T_n$.\footnote{
We call $p(\cdot,\cdot)$ {\em transition rate} to point out that the underlying random walk evolves in continuous time. $1(A)$ denotes the indicator function of the set $A$.
}
We call this random walk the {\em underlying random walk}.
For $x,y \in \bb T_n$ and $\eta \in \Omega_n$ such that $\eta(x) \geq 1$, let $\eta^{x,y} \in \Omega_n$ be given by 
\[
\eta^{x,y}(z) = 
\begin{cases}
\eta(x)-1; &z=x\\
\eta(y)+1; &z =y \\
\eta(z); & z \neq x,y.
\end{cases}
\]
Fix $\alpha \geq 0$. This parameter will be related to the mass of each particle, and its precise meaning will be explained in a few lines. For $f : \Omega_n \to \bb R$ let us define $\mc L_n f: \Omega_n \to \bb R$ as
\[
\mc L_n f(\eta) = n^{2+2\alpha}\sum_{x,y \in \bb T_n} p(x,y) g(\eta(x)) \big\{ f(\eta^{x,y})-f(\eta) \big\} \: \: \: \: \: \: \footnote{
We adopt the convention $g(\eta(x))f(\eta^{x,y}) =0$ whenever $\eta(x)=0$.
}
\]
for any $\eta \in \Omega_n$. Notice the sub-diffusive time scaling $2+2\alpha$. By the definition of $p(\cdot,\cdot)$ the sum can be restricted to {\em neighboring} sites $x,y$, that is, to sites $x,y\in \bb T_n$ such that $|y-x|=1$. The {\em zero-range process} is the continuous-time Markov chain $\{\eta_t^n; t \geq 0\}$ with infinitesimal generator $\mc L_n$.
We denote by $\{S_n(t); t \geq 0\}$ the semigroup associated to this process. The dynamics of the zero-range process can be described as follows. On each site $x \in \bb T_n$ we put a Poissonian clock of rate $2 n^{2+2\alpha}$. Each time the clock of some site $x$ rings, we choose a neighbor $y$ of $x$ with uniform probability and we move a particle from $x$ to $y$. If there are no particles to move, nothing happens. Since the number of sites is finite, this dynamics is well defined for any $t \geq 0$. In other words, the chain {\em does not explode} in finite time.

Let $\mc D([0,\infty); \Omega_n)$ be the space of {\em c\`adl\`ag} paths in $\Omega_n$ equipped with the $J_1$-Skorohod topology. For a given probability measure $\mu$ on $\Omega_n$, we denote by $\bb P_{\mu}^n$ the law on the space $\mc D([0,\infty); \Omega_n)$ of the process $\{\eta_t^n; t \geq 0\}$ with initial distribution $\mu$. We denote by $\bb E_\mu^n$ the expectation with respect to $\bb P_\mu^n$.

\subsection{Invariant measures} 
\label{s1.2}

For each $n,k \in \bb N_0$, let us define
\[
\Omega_{n,k} = \Big\{ \eta \in \Omega_n; \sum_{x \in \bb T_n} \eta(x) = k\Big\}.
\]
The set $\Omega_{n,k}$ corresponds to the set of configurations with exactly $k$ particles. Notice that the total number of particles is preserved by the dynamics. Therefore, the sets $\Omega_{n,k}$ are left invariant by the dynamics. Since the underlying random walk is irreducible and $g(k) >0$ whenever $k \geq1$, the zero-range process is irreducible on each of the sets $\Omega_{n,k}$. The uniform measure $\mu_{n,k}$ in $\Omega_{n,k}$ turns out to be the unique invariant measure of the chain in $\Omega_{n,k}$. In fact, we can verify that the uniform measures $\mu_{n,k}$ satisfy the {\em detailed balance} equation. Combining these measure by means of a chemical potential $\theta \in [0,1)$, we see that the geometric product measures $\bar{\mu}_\theta^n$ given by
\[
\bar{\mu}_\theta^n(\eta) = \prod_{x \in \bb T_n} (1-\theta) \theta^{\eta(x)}
\] 
for any $\eta \in \Omega_n$ are also left invariant by $\{\eta_t^n; t \geq 0\}$, that is, $\bar{\mu}_\theta^n S_n(t)  = \bar{\mu}_\theta^n$ for any $t \geq 0$ and any $\theta \in [0,1)$. Notice that 
\[
\int g(\eta(x)) d \bar{\mu}_\theta^n = \theta, \quad \int \eta(x) d \bar{\mu}_\theta^n = \frac{\theta}{1-\theta}.
\]
Since the number of particles is the only quantity conserved by the dynamics, it is reasonable to parametrize the invariant measures $\bar{\mu}_\theta^n$ by the average number of particles. Define the function $\phi: [0,\infty) \to [0,1)$ as $\phi(\rho) = \frac{\rho}{1+\rho}$ for any $\rho \geq 0$. Notice that $\phi$ is the inverse of $\theta \mapsto \frac{\theta}{1-\theta}$.
We will give to each particle a mass $n^{-(1+\alpha)}$. Therefore, the bigger the $\alpha$ is, the larger the number of particles we are putting into the system. In order to have an average total mass $\rho$ we have to choose
\[
\theta_n(\rho) = \frac{\rho n^\alpha}{1+\rho n^\alpha}.
\]
Notice that the density of particles per site is equal to $ \rho n^\alpha$ under the measure $\mu_\rho^n$.
The canonical choice in the literature is $\alpha =0$, on which case each particle has a mass $\frac{1}{n}$ and $\rho$ can be interpreted as the average number of particles per site.
We will use the notation $\mu_{\rho}^n = \bar{\mu}_{\theta_n(\rho)}^n$.

\subsection{The hydrodynamic limit: the case $\alpha =0$}
\label{s1.3}

Given a function $u: \bb T \to [0,+\infty)$ we denote by $\nu_{u(\cdot)}^n$ the product measure in $\Omega_n$ given by
\begin{equation*}
\label{nu_t}
\nu_{u(\cdot)}^n(\eta) = \prod_{x \in \bb T_n} \frac{1}{1+n^\alpha u(\frac{x}{n})} \Big( \frac{n^\alpha u(\frac{x}{n})}{1+ n^\alpha u(\frac{x}{n})}\Big)^{\eta(x)},
\end{equation*}
that is, $\nu_{u(\cdot)}^n$ is a product of geometric distributions with expectations $n^\alpha u(\frac{x}{n})$.

Given two probability measures $\mu$, $\nu$ in $\Omega_n$, let $H_n(\mu|\nu)$ denote the relative entropy of $\mu$ with respect to $\nu$:
\begin{equation}
\label{ec1.3.2}
H_n(\mu|\nu) = \sup_{f} \Big\{ \int f d\mu - \log \int e^f d\nu\Big\},
\end{equation}
where the supremum runs over bounded functions $f: \Omega_n \to \bb R$. It turns out that $H_n(\mu|\nu) <+\infty$ implies that $\mu$ is absolutely continuous with respect to $\nu$, $\mu \ll \nu$, on which case we have the identity
\[
H_n(\mu|\nu) = \int \frac{d\mu}{d\nu} \log \frac{d \mu}{d \nu} d\nu.
\]
A very useful inequality involving entropy is obtained taking $\gamma f$ as a test function in \eqref{ec1.3.2}: for any $\gamma>0$ and any $f: \Omega_n \to \bb R$ integrable with respect to $\mu$, 
\begin{equation}
\label{entropy}
\int f d\mu \leq \frac{1}{\gamma} \Big\{ H_n(\mu | \nu) + \log \int e^{\gamma f} d \nu \Big\}.
\end{equation}
We call this inequality the {\em entropy inequality}.

The following result is well known (see Chapter 6 of \cite{KipLan} for instance):

\begin{theorem}
\label{t1}
Let $\alpha =0$ and $u_0: \bb T \to [0,+\infty)$ be a function of class $\mc C^{2+\delta}(\bb T)$ for some $\delta>0$.  Let $\{u(t,x); t \geq 0, x \in \bb T\}$ be the solution of the equation
\begin{equation}
\label{echid0}
\left\{
\begin{array}{l}
\partial_t u = \Delta \phi(u),\\
u(0,\cdot) = u_0(\cdot),\\
\end{array}
\right.
\end{equation}
where $\phi(\rho) =: \frac{\rho}{1+\rho}$.
Let $\{\mu^n; n \geq 1\}$ be a sequence of measures in $\Omega_n$ such that
\[
\lim_{n \to \infty} \frac{H_n\big(\mu^n| \nu_{u_0(\cdot)}^n\big)}{n} =0.
\]
Then, for any $t \geq 0$,

\[
\lim_{n \to \infty} \frac{H_n\big(\mu^n S_n(t)|\nu_{u(t,\cdot)}^n\big)}{n} =0
\]
\end{theorem}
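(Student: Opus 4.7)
The plan is to follow Yau's relative entropy method, as in Chapter 6 of \cite{KipLan}. Set $\nu_t^n := \nu_{u(t,\cdot)}^n$, $\mu_t^n := \mu^n S_n(t)$, and $H_n(t) := H_n(\mu_t^n \mid \nu_t^n)$. The starting point is Yau's inequality
$$\frac{d}{dt} H_n(t) \;\leq\; \int V_t^n(\eta)\, d\mu_t^n, \qquad V_t^n \;:=\; \frac{1}{\nu_t^n}\bigl(\mc L_n^* \nu_t^n - \partial_t \nu_t^n\bigr),$$
where $\mc L_n^*$ denotes the Kolmogorov forward operator (the formal adjoint of $\mc L_n$ in $\ell^2(\Omega_n)$). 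The goal is to bound the right-hand side by $C H_n(t) + o(n)$ uniformly on compact time intervals, so that Gronwall's lemma together with the hypothesis $H_n(0) = o(n)$ gives $H_n(t) = o(n)$ for every $t \geq 0$.

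The next step is to identify $V_t^n$ explicitly. Using the product form of $\nu_t^n$ and writing $\theta_t^n(x) := u(t,x/n)/(1+u(t,x/n))$ (the case $\alpha=0$), a direct computation followed by Taylor expansion of $\log \theta_t^n$ at scale $1/n$ (using $u \in \mc C^{2+\delta}$) gives
$$V_t^n(\eta) \;=\; \sum_{x \in \bb T_n} G_t\bigl(\tfrac{x}{n}\bigr)\bigl[g(\eta(x)) - \phi(u(t,x/n))\bigr] \;+\; R_n(t,\eta),$$
where $G_t$ is a smooth function of the first and second derivatives of $\log \theta(u(t,\cdot))$, and $R_n$ is a uniformly controlled remainder. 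The crucial cancellation producing an order-$n$ rather than an order-$n^2$ expression is precisely the PDE $\partial_t u = \Delta \phi(u)$: the leading $n^2$ contribution coming from $\mc L_n^* \nu_t^n$ is annihilated by $\partial_t \nu_t^n$ exactly when $u$ satisfies \eqref{echid0}.

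The heart of the proof is then the \emph{one-block estimate}: for a mesoscopic scale $\ell$, the local variable $g(\eta(x))$ can be replaced by $\phi(\bar\eta^\ell(x))$, with $\bar\eta^\ell(x) := (2\ell+1)^{-1}\sum_{|y-x|\leq\ell}\eta(y)$, at a cost that vanishes after $n \to \infty$ followed by $\ell \to \infty$ once integrated against $d\mu_t^n$. The classical argument changes reference from $\mu_t^n$ to a translation-invariant version of $\nu_t^n$ via the entropy inequality, exploits the Dirichlet form bound coming from entropy production, and closes through compactness of the set of shift-invariant probability measures of bounded specific entropy and entropy production, reducing matters to the ergodic case and the identification of invariant measures of the zero-range process as mixtures of the $\bar\mu_\theta^n$. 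A complementary two-blocks estimate, followed by an application of \eqref{entropy} with an appropriate $\gamma$, replaces $\phi(\bar\eta^\ell(x))$ by $\phi(u(t,x/n))$ up to a term absorbable into $CH_n(t) + o(n)$, using standard large-deviation bounds for the product measures $\nu_t^n$.

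The main obstacle is thus the one-block estimate. For $\alpha = 0$ it is classical, relying on compactness in the space of translation-invariant measures together with the smoothness and boundedness of $u$. As stressed in the introduction, the $\alpha > 0$ setting requires the quantitative refinement via the Morris spectral gap because the typical density grows like $n^\alpha$ and the compactness step no longer closes — a difficulty that does not arise for Theorem \ref{t1} itself.
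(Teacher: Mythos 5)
Your overall strategy (Yau's inequality, one-block estimate, entropy inequality plus a Gronwall argument) is the right one, and for $\alpha=0$ the classical compactness route to the one-block estimate is legitimate. However, there is a genuine gap in your identification of $V_t^n$, and it propagates to a step that would fail. The correct computation of $\tfrac{1}{\nu_t^n}\big(\mc L_n^\ast \nu_t^n-\partial_t\nu_t^n\big)$ produces, besides $g(\eta(x))-\phi(u(t,\tfrac{x}{n}))$, the linear compensator $-\phi'(u(t,\tfrac{x}{n}))\big(\eta(x)-u(t,\tfrac{x}{n})\big)$ coming from $\partial_t\nu_t^n$; this term is of the same order as the main one and cannot be shunted into a ``uniformly controlled remainder''. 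Its role is structural: after the one-block replacement the integrand becomes the second-order Taylor remainder $M(u,v)=\phi(v)-\phi(u)-\phi'(u)(v-u)$ evaluated at $v=\bar\eta^\ell(x)$, and it is only because $|M(u,v)|\leq C\, I_u(v)$, with $I_u$ the large-deviation rate function (cf.\ Lemma \ref{l5}), that the exponential moment appearing in the entropy inequality \eqref{entropy} stays bounded uniformly in $\ell$, yielding a total cost $O(n/\ell)=o(n)$. Your final step, ``replace $\phi(\bar\eta^\ell(x))$ by $\phi(u(t,x/n))$ up to a term absorbable into $CH_n(t)+o(n)$'', attempts this absorption on a quantity that is \emph{linear} in the fluctuation $\bar\eta^\ell(x)-u$. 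For such a term the best the entropy inequality gives is $\gamma^{-1}H_n(t)+C\gamma n$ (the per-site log-moment contribution is of order $\gamma^2$, not $o(\gamma)$ uniformly in $\ell$), and Gronwall then yields $\limsup_n H_n(t)/n\leq C\gamma t\, e^{t/\gamma}$, which does not vanish as $\gamma\to 0$. The argument therefore does not close without retaining the linear term and exploiting the exact cancellation it provides; this is precisely the content of the decomposition \eqref{ec2.1} and of Lemma \ref{l2}, which transfers the linear term onto the block average before assembling $M_n$.

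Two smaller points. First, the ``two-blocks estimate'' is not part of the relative entropy method; it belongs to the entropy (GPV) method. In Yau's scheme the block average is compared directly with $u(t,\tfrac{x}{n})$ through the slowly varying product reference measure $\nu_t^n$, which is why no two-blocks step appears in the paper. Second, the paper proves the one-block estimate quantitatively via the spectral gap (Section \ref{s4}) rather than by compactness, since the compactness argument breaks down for $\alpha>0$; for Theorem \ref{t1} itself either route works, so this difference is one of method rather than correctness.
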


This result is known in the literature as the {\em hydrodynamic limit} of the zero-range process, and the equation \eqref{echid0} is called the {\em hydrodynamic equation} associated to the zero-range processes $\{ \eta_t^n; t \geq 0\}$. What this result is telling us, is that the distribution of particles at time $t$ is close to a geometric product measure of averages $n^\alpha u(t,\frac{x}{n})$. In particular, the {\em density of particles} is well approximated by the solution of the hydrodynamic equation, in the sense of entropy. In fact, Theorem \ref{t1} has the following Corollary:

\begin{corollary}
\label{c1}
Under the hypothesis of Theorem \ref{t1}, for any $t \geq 0$ and any $F: \bb T \to \bb R$ continuous,
\[
\lim_{n \to \infty} \frac{1}{n^{1+\alpha}} \sum_{x \in \bb T_n} \eta_t^n(x) F\big(\tfrac{x}{n}\big) = \int_{\bb T} u(t,x) F(x) dx.
\]
\end{corollary}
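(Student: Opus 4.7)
The plan is to deduce the corollary from Theorem \ref{t1} by running the entropy inequality \eqref{entropy} against indicator functions, which is the standard way to promote an entropy bound into convergence in $\bb P^n_{\mu^n}$-probability of the empirical integral. Since $\alpha = 0$ here, set $\mu^n_t := \mu^n S_n(t)$,
\[
X^F_n(\eta) := \frac{1}{n}\sum_{x \in \bb T_n} \eta(x) F(x/n), \qquad I_F(t) := \int_{\bb T} u(t,x) F(x)\, dx,
\]
and for each $\delta>0$ let $A^n_\delta := \{\eta : |X^F_n(\eta) - I_F(t)| > \delta\}$. The task is to show that $\mu^n_t(A^n_\delta) \to 0$ for every $\delta>0$.

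The first step is the sharpened form of the entropy inequality obtained by testing \eqref{entropy} with $f = \gamma \mathbf{1}_{A^n_\delta}$ and choosing $\gamma = \log(1 + 1/\nu^n_{u(t,\cdot)}(A^n_\delta))$, which yields
\[
\mu^n_t(A^n_\delta) \;\leq\; \frac{\log 2 + H_n\!\left(\mu^n_t \,\big|\, \nu^n_{u(t,\cdot)}\right)}{\log\!\left(1 + 1/\nu^n_{u(t,\cdot)}(A^n_\delta)\right)}.
\]
Theorem \ref{t1} makes the numerator $\log 2 + \omicron(n)$, so it suffices to prove the large deviation bound
\[
\nu^n_{u(t,\cdot)}(A^n_\delta) \;\leq\; e^{-c(\delta) n}
\]
for some $c(\delta)>0$ depending on $t$ but not on $n$.

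For the second step, $\nu^n_{u(t,\cdot)}$ is a product of geometric distributions with parameters $\theta^n(x) = u(t,x/n)/(1+u(t,x/n))$. Parabolic comparison applied to the nonlinear heat equation \eqref{echid0} (or any direct maximum-principle argument) gives a uniform bound on $u(t,\cdot)$ coming from the bound on $u_0 \in \mc C^{2+\delta}(\bb T)$, and therefore $\theta^n(x) \leq \theta_\star < 1$ uniformly in $n$, $x$, and $t$ in any compact time interval. In particular the geometric moment generating functions are finite on a fixed neighborhood of the origin. A Riemann-sum computation gives $\mathbb E_{\nu^n_{u(t,\cdot)}}[X^F_n] = \frac{1}{n}\sum_x u(t,x/n) F(x/n) \to I_F(t)$ by continuity of $uF$, so for $n$ large $A^n_\delta$ is contained in the event that the centred sum exceeds $\delta/2$ in absolute value. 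A standard Cram\'er computation based on $\log \mathbb E_{\nu^n_{u(t,\cdot)}}\!\bigl[\exp\!\bigl(\lambda (\eta(x) - \mathbb E\eta(x)) F(x/n)/n\bigr)\bigr] \leq C \lambda^2/n^2$, valid for $|\lambda| \leq c_0 n / \|F\|_\infty$, followed by optimization in $\lambda$, produces the claimed exponential decay.

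The only delicate point is securing the uniform bound on $u(t,\cdot)$: the exponential integrability of geometric random variables breaks down near $\theta = 1$, so the Cram\'er step really relies on $\theta^n(x)$ being bounded away from $1$ uniformly. Once this is in place, combining the sharp entropy inequality with the concentration estimate yields $\mu^n_t(A^n_\delta) \to 0$, hence the stated convergence in probability.
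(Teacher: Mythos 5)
Your proposal is correct and follows essentially the route the paper itself indicates: the paper does not write out a proof of Corollary \ref{c1} but explicitly attributes it to the entropy inequality applied to $H_n(\mu^n S_n(t)|\nu^n_{u(t,\cdot)})=\omicron(n)$ together with the standard large-deviations estimate for the reference product measure (Chapter 6 of Kipnis--Landim), which is exactly your sharpened indicator-function bound plus the Cram\'er computation. The one point you rightly flag as delicate --- that $u(t,\cdot)$ stays uniformly bounded so the geometric parameters stay away from $1$ --- is indeed supplied by the maximum principle for \eqref{echid0}, so the argument closes.
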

This Corollary can be interpreted as a weak law of large numbers for the empirical density of particles of the process, and it is taken in many cases as the definition of hydrodynamic limit of a system of particles.
The smoothness assumption $u_0 \in \mc C^{2+\delta}(\bb T)$ is needed in order to ensure smoothness of the solution of the hydrodynamic equation \eqref{echid0}. 

Our main objective in these notes is to study the case $\alpha >0$. For each $\lambda >0$, let $u^\lambda$ denote the solution of \eqref{echid0} with initial condition $\lambda u_0$. At least formally, a simple computation shows that 
\[
u(t,x) = \lim_{\lambda \to \infty} \frac{u^\lambda(\lambda^2 t,x)}{\lambda}
\]
should be the solution of
\begin{equation}
\label{echid}
\left\{
\begin{array}{l}
\partial_t u = \partial_x \big( u^{-2} \partial_x u\big),\\
u(0,\cdot) = u_0(\cdot).
\end{array}
\right.
\end{equation}
The appearance of the factor $\lambda^2$ in the time scale explains the pre-factor $n^{2+2\alpha}$ in the definition of $\mc L_n$. We will show that for $\alpha \in (0,1)$  the equation \eqref{echid} is indeed the hydrodynamic equation associated to the processes $\{\eta_t^n; t \geq 0\}$. In order to state our result in a precise way, we need to introduce some additional definitions.
We also warn the reader to the fact that most of the definitions of here will be overrun by the definitions made in Section \ref{s1.5}.

\subsection{The discrete approximations}
\label{s1.4}

Here and everywhere we need to take a finite interval $[0,T]$, where $T$ is smaller than the explosion time of the second derivative (see Lemma \ref{l5.5}).
Although some steps  in the proof of the hydrodynamic limit can be done by means of more traditional methods, with an eye in future applications we want to get rid of the smoothness assumption $u_0 \in \mc C^{2+\delta}(\bb T)$. We will see below that it is very natural to consider {\em discrete approximations} of the hydrodynamic equation \eqref{echid}. For each $n \in \bb N$, define $\phi_n:[0,\infty) \to [0,\infty)$ as $\phi_n(u) = n^\alpha \phi(n^\alpha u)$. Notice that
\begin{equation}
\label{ec1.4.1}
\int n^\alpha g(\eta(x)) d \mu_\rho^n = \phi_n(\rho), \quad  \lim_{n \to \infty} \big( \phi_n(\rho) -n^\alpha\big) = -\rho^{-1}
\end{equation}
Notice as well that $\phi_n'(\rho) \to \frac{1}{\rho^2}$ as $n \to \infty$.

For $f: \bb T_n \to \bb R$ we define the discrete laplacian $\Delta_n f: \bb T_n \to \bb R$ as
\[
\Delta_n f(x) = n^2 \big( f(x+1) + f(x-1) -2 f(x)\big)\;.
\]

  Let $u_0^n: \bb T_n \to [0,\infty)$ be given. The natural choice in our context will be $u_0^n(x) = u_0(\frac{x}{n})$, where $u_0$ is the initial condition of the hydrodynamic equation \eqref{echid}. For each $n \in \bb N$, let $\{u_t^n(x); t \geq 0, x \in \bb T_n\}$ be the solution of the system of ODE's
\begin{equation}
\label{ec1.4.3}
\tfrac{d}{dt} u_t^n(x) = \Delta_n \phi_n(u_t^n(x))
\end{equation}
with initial data $u_0^n$. The following result is proved in Section \ref{s5}. 

\begin{theorem}
\label{t2}

Assume that $u_0\in \mc C^2(\bb T)$ and bounded below. That is, there exists $\epsilon >0$ such that
\[
u_0(x) \geq \epsilon \qquad \text{ for any } x \in \bb T.
\]
  Then, there exists $T > 0$ such that
\[
\lim_{n \to \infty} \sup_{x \in \bb T_n} \sup_{0 \leq t \leq T} \big|u_t^n(x) - u(t,\tfrac{x}{n})\big| =0,
\]
where $\{u(t,x); t \geq 0, x \in \bb T\}$ is the solution of the hydrodynamic equation \eqref{echid} with initial data $u_0$.
\end{theorem}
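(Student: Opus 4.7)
The plan follows the classical numerical-analysis template of consistency plus stability, adapted to a torus with $n$ sites and an $n$-dependent nonlinearity $\phi_n$. First, parabolic theory applied to \eqref{echid} (the content of Lemma~\ref{l5.5}) yields a classical solution $u$ on some $[0,T]$ satisfying $\epsilon/2 \leq u(t,x) \leq \|u_0\|_\infty$ and $\sup_{t \leq T}\|u(t,\cdot)\|_{C^2} < \infty$. A discrete maximum principle then gives analogous bounds on the scheme: at a spatial minimum $x^{*}$ of $u^n_t(\cdot)$, strict monotonicity of $\phi_n$ forces $\Delta_n \phi_n(u^n_t)(x^{*}) \geq 0$, so $\min_x u^n_t(x)$ is non-decreasing, and symmetrically the maximum is non-increasing. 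Thus $\epsilon \leq u^n_t(x) \leq \|u_0\|_\infty$ uniformly in $n,t,x$, on which range $\phi_n'$ is bounded above and below by positive constants independent of $n$.

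Next, for consistency, set $v^n_t(x) := u(t,x/n)$ and define the residual $R_n := \partial_t v^n - \Delta_n \phi_n(v^n)$. Using $\phi_n(v) = n^\alpha - 1/(n^{-\alpha}+v)$ together with $\partial_t u = -\Delta(1/u)$, I split
\[
R_n(t,x) = \Delta_n\!\Bigl[\tfrac{1}{u(t,\cdot/n)} - \tfrac{1}{n^{-\alpha}+u(t,\cdot/n)}\Bigr](x) + \Bigl\{\Delta_n\bigl[\tfrac{1}{u(t,\cdot/n)}\bigr](x) - \Delta(1/u)(t,x/n)\Bigr\}.
\]
The first bracket is the discrete Laplacian of $n^{-\alpha}/(u(n^{-\alpha}+u))$, an $O(n^{-\alpha})$ function whose second discrete derivative is also $O(n^{-\alpha})$ by the smoothness and uniform positivity of $u$; the second is the standard discrete-versus-continuous Laplacian error on the $C^2$ function $1/u(t,\cdot)$, which tends to $0$ uniformly on $[0,T]$. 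Hence $\|R_n\|_\infty \to 0$.

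For stability, the error $e^n := u^n - v^n$ satisfies $\partial_t e^n = \Delta_n[\phi_n(u^n) - \phi_n(v^n)] - R_n$. Since $\phi_n$ is strictly increasing, $\mathrm{sgn}(e^n) = \mathrm{sgn}(\phi_n(u^n) - \phi_n(v^n))$, and a discrete Kato-type inequality combined with $\sum_{x \in \bb T_n}\Delta_n f(x) = 0$ gives
\[
\tfrac{d}{dt}\,\tfrac{1}{n}\sum_{x \in \bb T_n} |e^n_t(x)| \;\leq\; \|R_n(t,\cdot)\|_\infty.
\]
With $e^n_0 \equiv 0$ (using the natural choice $u^n_0(x) = u_0(x/n)$) this yields $\frac{1}{n}\sum_x |e^n_t(x)| \leq T\sup_{s \leq T}\|R_n(s,\cdot)\|_\infty \to 0$, i.e.\ convergence in discrete $L^1$ on $[0,T]$.

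The main obstacle is upgrading this $L^1$ estimate to the required uniform convergence, since the $L^1$-contraction associated with the monotone nonlinearity does not directly give $L^\infty$. The plan is to establish an a priori bound $\sup_{t \leq T}\|\nabla_n u^n_t\|_\infty \leq C$ independent of $n$: differentiating the scheme spatially gives an evolution $\partial_t(\nabla_n u^n) = \Delta_n[c^n \nabla_n u^n]$ with a coefficient $c^n$ bounded above and away from zero, and a parabolic maximum-principle/energy argument on this linearized equation (essentially the content of Lemma~\ref{l5.5}, which is why one restricts to $T$ below the explosion time of the second derivative) should yield the uniform Lipschitz bound. Combined with the trivial bound $\|\nabla_n v^n_t\|_\infty \leq \|\partial_x u(t,\cdot)\|_\infty$, a discrete Morrey-type interpolation
\[
\|e^n_t\|_\infty \;\lesssim\; \bigl(\|\nabla_n e^n_t\|_\infty \cdot \|e^n_t\|_{L^1_n}\bigr)^{1/2}
\]
(which follows by writing $e^n(x_0) - e^n(y)$ as a telescoping sum of gradient terms and averaging $y$ over a discrete ball of optimized radius) converts the $L^1_n$ convergence of the previous step into the uniform convergence asserted by the theorem.
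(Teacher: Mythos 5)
Your strategy (consistency of the scheme plus $L^1$-contraction, upgraded to $L^\infty$ by interpolation against a uniform Lipschitz bound) is genuinely different from the paper's, which never touches the continuous solution directly: the paper proves equicontinuity of $\{u^n\}$ (spatial $\tfrac12$-H\"older from the energy estimate of Lemmas \ref{l5.3}--\ref{l5.4}, temporal Lipschitz from Lemma \ref{l5.5}), extracts uniformly and weakly-$\mc H_1$ convergent subsequences, shows every limit point is a weak energy solution of \eqref{echid}, and concludes by Oleinik's uniqueness argument. The difference matters because your first step --- ``parabolic theory yields a classical solution with $\sup_{t\le T}\|u(t,\cdot)\|_{C^2}<\infty$'', and, implicitly, with $\partial_x^2(1/u(t,\cdot))$ equicontinuous uniformly up to $t=0$ so that $\Delta_n[1/u(t,\cdot/n)]-\Delta(1/u)(t,\cdot/n)\to 0$ uniformly --- is exactly the input the paper is structured to avoid. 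With $u_0$ only in $\mc C^2(\bb T)$ (not $\mc C^{2+\delta}$), uniform-up-to-$t=0$ control of second derivatives of the quasilinear solution is not off-the-shelf; the paper even remarks that Theorem \ref{t1} needs $\mc C^{2+\delta}$ data precisely to guarantee such bounds, and Theorem \ref{t2} is designed to dispense with them by \emph{constructing} $u$ as the limit of the scheme. As written, your consistency estimate therefore rests on an unproved (and nontrivial) regularity assertion about the PDE solution; this is the main gap.

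A second, more local problem: the mechanism you propose for the uniform Lipschitz bound does not work. The discrete gradient $w=\nabla_n u^n$ satisfies $\partial_t w=\Delta_n[c^n w]$ with the coefficient \emph{inside} the Laplacian (the same Fokker--Planck form as \eqref{ec5.4}); this preserves positivity and total mass but not the maximum, since at a maximizer $x^*$ of $w$ the quantity $n^2[c(x^*+1)w(x^*+1)+c(x^*-1)w(x^*-1)-2c(x^*)w(x^*)]$ need not be nonpositive when $c$ varies in space. The paper's Lemma \ref{l5.5} instead runs the maximum principle on $\psi^n_t=\phi_n'(u^n_t)\Delta_n\phi^n_t$, whose equation \eqref{EL} has the Laplacian in non-divergence form plus a Riccati term. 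Your Lipschitz bound is in fact recoverable from that lemma: a uniform bound on $\Delta_n\phi^n_t$, combined with $\sum_{x\in\bb T_n} n\big(\phi^n_t(x+1)-\phi^n_t(x)\big)=0$ on the torus and the lower bound on $\phi_n'$ over $[\epsilon,\|u_0\|_\infty]$, yields $\|\nabla_n u^n_t\|_\infty\le C$ uniformly in $n$ and $t\le T$. So you should route the a priori estimate through $\Delta_n\phi^n_t$ rather than through a maximum principle for the gradient equation. With that repair, and granting the classical regularity of $u$, the $L^1$-contraction and the discrete Morrey interpolation steps are sound.
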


To make notation less cumbersome, we will write $\phi_t^n(x) = \phi_n(u_t^n(x))$ for  $t \geq 0$ and $x \in \bb T_n$.
Before state our main result, let us recall some general facts about attractiveness.

\subsection{Attractiveness and coupling inequality}

A well known property of zero-range processes with non-decreasing interaction rates  is its {\em attractiveness}. Let $\eta , \xi \in \Omega_n$, we say that $\eta \preceq \xi$ if $\eta(x) \leq \xi(x)$ for any $x \in \bb T_n$. The relation $\preceq$ defines a partial order in $\Omega_n$. Let $\mu$, $\nu$ be two probability measures in $\Omega_n$. We say that $\mu \preceq \nu$ if there exists a probability measure $\Pi$ in $\Omega_n \times \Omega_n$ such that $\Pi(\eta,\Omega_n) = \mu(\eta)$ and $\Pi(\Omega_n,\xi) = \nu(\xi)$ for any $\eta, \xi \in \Omega_n$ and such that
\[
\Pi\big((\eta,\xi) \in \Omega_n \times \Omega_n;  \eta \preceq \xi\big) =1.
\]
We say in that case that $\mu$ is {\em stochastically dominated} by $\nu$.

We say that a function $f: \Omega_n \to \bb R$ is {\em increasing} if $f(\eta) \leq f(\xi)$ whenever $\eta \preceq \xi$. The following proposition is actually an alternative definition of stochastic domination:

\begin{proposition}
\label{p1.4}
Let $\mu \preceq \nu$ be two probability measures in $\Omega_n$ and $f: \Omega_n \to \bb R$ be an increasing function such that $\int f d \mu >-\infty$, then $\int f d\mu \leq \int f d\nu$.
\end{proposition}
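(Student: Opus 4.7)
The plan is to use the definition of stochastic domination directly via the coupling it provides, and then exploit monotonicity of $f$ pointwise on the support of the coupling.

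First I would unpack the hypothesis $\mu \preceq \nu$: by definition, there exists a probability measure $\Pi$ on $\Omega_n \times \Omega_n$ whose first and second marginals are $\mu$ and $\nu$ respectively, and such that $\Pi(\{(\eta,\xi) : \eta \preceq \xi\}) = 1$. Using the marginal property, one can rewrite
\[
\int f \, d\mu \;=\; \int f(\eta) \, d\Pi(\eta,\xi), \qquad \int f\, d\nu \;=\; \int f(\xi) \, d\Pi(\eta,\xi),
\]
so the comparison between the two integrals is reduced to a pointwise comparison on $\Omega_n \times \Omega_n$.

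Next I would invoke the hypothesis that $f$ is increasing: on the full-measure set $\{\eta \preceq \xi\}$ we have $f(\eta) \le f(\xi)$, hence $f(\eta) \le f(\xi)$ holds $\Pi$-almost surely. If both $\int f(\eta)\,d\Pi$ and $\int f(\xi)\,d\Pi$ are well-defined extended real numbers, monotonicity of the Lebesgue integral immediately gives the desired inequality. To justify integrability and avoid the $\infty - \infty$ ambiguity, I would note that $\Omega_n = \bb N_0^{\bb T_n}$ has a minimum element $\mathbf{0}$ (the empty configuration) with respect to $\preceq$, so that $f(\eta) \geq f(\mathbf 0)$ for every $\eta \in \Omega_n$; in particular $f$ is bounded below by the constant $f(\mathbf 0) \in \bb R$. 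Replacing $f$ by $\tilde f := f - f(\mathbf 0) \geq 0$, both $\int \tilde f\, d\mu$ and $\int \tilde f\, d\nu$ are well-defined in $[0,+\infty]$ and the monotone comparison $\int \tilde f\, d\mu \leq \int \tilde f\, d\nu$ holds unconditionally. The assumption $\int f\, d\mu > -\infty$ (which here is automatic, but is stated to cover more general state spaces) then guarantees that $\int \tilde f\, d\mu < +\infty$ only if the original integral is finite; in any case, adding back the constant $f(\mathbf 0)$ yields $\int f\, d\mu \leq \int f\, d\nu$.

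There is essentially no hard step: the entire content of the proposition is the existence of a monotone coupling, and once that is unpacked the inequality is immediate from pointwise monotonicity and linearity of the integral. The only point deserving a line of care is the integrability bookkeeping sketched above, to make sure we never subtract $+\infty$ from $+\infty$.
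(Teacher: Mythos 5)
Your argument is correct. The paper gives no proof of this proposition at all---it is stated as a well-known alternative characterization of stochastic domination---and your coupling argument is exactly the standard one that the paper implicitly relies on; in particular your observation that $f(\mathbf 0)\leq f(\eta)$ for every $\eta\in\Omega_n$ (since the empty configuration is a minimum for $\preceq$ and $f$ is increasing) correctly disposes of the integrability bookkeeping, making both integrals well defined in $(-\infty,+\infty]$ so that the pointwise inequality $f(\eta)\leq f(\xi)$ on the full-measure set $\{\eta\preceq\xi\}$ passes to the integrals.
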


Notice that geometric distributions are stochastically ordered by their expectations. In particular, for any $u_1,u_2: \bb T_n \to [0,\infty)$ such that $u_1(x) \leq u_2(x)$ for any $x \in \bb T_n$ we have that
\[
\nu_{u_1(\cdot)}^n \preceq \nu_{u_2(\cdot)}^n.
\]

Now we are ready to say on which sense the zero-range process is attractive:

\begin{proposition}
\label{p1.5} Let $\eta^1 \preceq \eta^2$  be two initial configurations of particles in $\Omega_n$. There exists a Markov process $\{(\eta_t^{n,1}, \eta_t^{n,2}); t \geq 0\}$ defined on $\Omega_n \times \Omega_n$ such that $\{\eta_t^{n,i}; t \geq 0\}$ is a zero-range process with initial configuration $\eta^i$, $i=1,2$ and such that
\[
\eta_t^{n,1} \preceq \eta_t^{n,2} \text{ for any } t \geq 0. 
\]
\end{proposition}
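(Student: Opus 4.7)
The plan is to construct the coupling via the so-called \emph{basic coupling}, using a single source of randomness (one Poisson clock per site) that drives both marginals simultaneously, exploiting the fact that the jump rate $g(k)=\mathbf{1}(k\geq 1)$ is non-decreasing in $k$.

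Concretely, I would define a Markov process $\{(\eta_t^{n,1},\eta_t^{n,2}); t \geq 0\}$ on $\Omega_n \times \Omega_n$ through the generator
\[
\bar{\mc L}_n F(\eta,\xi) = n^{2+2\alpha} \sum_{x,y \in \bb T_n} p(x,y) \Bigl\{ a(\eta,\xi,x) \bigl[F(\eta^{x,y},\xi^{x,y})-F(\eta,\xi)\bigr]
\]
\[
+ b(\eta,\xi,x)\bigl[F(\eta^{x,y},\xi)-F(\eta,\xi)\bigr] + c(\eta,\xi,x) \bigl[F(\eta,\xi^{x,y})-F(\eta,\xi)\bigr] \Bigr\},
\]
where $a = g(\eta(x))\wedge g(\xi(x))$, $b = g(\eta(x))-a$, $c = g(\xi(x))-a$. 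Checking that $a+b = g(\eta(x))$ and $a+c = g(\xi(x))$, one sees that the two marginals are indeed zero-range processes with generator $\mc L_n$, so item (i) of the proposition is automatic. Since the state space is finite on each $\Omega_{n,k}\times\Omega_{n,k'}$, there is no issue with existence.

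The core step is to verify that the set $D = \{(\eta,\xi)\in\Omega_n\times\Omega_n : \eta \preceq \xi\}$ is absorbing under $\bar{\mc L}_n$, so that starting from $(\eta^1,\eta^2)\in D$ the coupled process never leaves $D$. For a neighboring pair $(x,y)$ I split into cases according to the values of $g(\eta(x))$ and $g(\xi(x))$: if $\eta(x),\xi(x)\geq 1$ both processes move together and the order is trivially preserved coordinatewise; if $\eta(x)=0$ and $\xi(x)\geq 1$ only $\xi$ moves, and since $\eta(x)=0\leq \xi(x)-1$ and $\eta(y)\leq \xi(y)\leq \xi(y)+1$, the order is preserved; the remaining case is symmetric but cannot occur when $\eta\preceq \xi$ because $\eta(x)\geq 1$ forces $\xi(x)\geq 1$; and if both are zero nothing happens. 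Since $D$ is closed under every elementary transition with positive rate, $\mathbf 1_D$ is invariant and the conclusion $\eta_t^{n,1}\preceq \eta_t^{n,2}$ for all $t\geq 0$ follows.

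There is no real obstacle here; the only delicate point is making the case analysis precise enough to confirm that $D$ is absorbing, which rests entirely on the monotonicity of $g$. Proposition \ref{p1.4} is not needed for this proof, though it would be used in downstream applications of the coupling.
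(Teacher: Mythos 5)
Your construction is correct: the basic coupling, in which a pair $(x,y)$ fires jointly at rate $g(\eta(x))\wedge g(\xi(x))$ and each marginal alone at the excess rate, together with your case analysis showing that the ordered set $D=\{\eta\preceq\xi\}$ is absorbing (the only possible solo jump under the order is a $\xi$-jump from a site where $\eta(x)=0\leq\xi(x)-1$), is exactly the standard proof of attractiveness for zero-range processes with non-decreasing rates. The paper does not supply its own proof but delegates it to \cite{And}, where the same basic-coupling argument is used, so your proposal matches the intended route.
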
 

This proposition is what is known in the literature as the {\em attractiveness} of the zero-range process. A proof of this result can be found in \cite{And}. A simple consequence of this property is the following. Let $\mu \preceq \nu$, then $\mu S_n(t) \preceq \nu S_n(t)$ for any $t \geq 0$. 

We will use the attractiveness of the zero-range process in the following way:

\begin{proposition}
\label{p1.6}
Let $\{\mu^n; n\in \bb N\}$ be a sequence of probability measures in $\Omega_n$ such that there exists a constant $ \epsilon>0$ such that $\mu^n_\epsilon \preceq \mu^n$ for any $n\in \bb N$. Then, $\mu_\epsilon^n \preceq \mu^n S_n(t)$ for any $t \geq 0$. In particular, for any increasing function $f: \Omega_n \to \bb R$,
\[
\int f d \mu_\epsilon^n \; \leq \; \bb E^n_{\mu^n}[f(\eta_t^n)].
\]
\end{proposition}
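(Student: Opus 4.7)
The plan is to combine two facts already established in the excerpt: the invariance of the product measures $\mu_\rho^n = \bar\mu_{\theta_n(\rho)}^n$ under the zero-range dynamics (stated in Section 1.2), and the monotonicity of the semigroup with respect to stochastic domination, which is the immediate corollary of attractiveness (Proposition \ref{p1.5}) spelled out right after its statement: $\mu \preceq \nu$ implies $\mu S_n(t) \preceq \nu S_n(t)$ for every $t \geq 0$.

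First I would apply this monotonicity to the hypothesis $\mu_\epsilon^n \preceq \mu^n$, obtaining
\[
\mu_\epsilon^n S_n(t) \;\preceq\; \mu^n S_n(t) \quad \text{for every } t \geq 0.
\]
Next I would invoke invariance: since $\mu_\epsilon^n$ is a geometric product measure of the form $\bar\mu_{\theta_n(\epsilon)}^n$, the remarks in Section \ref{s1.2} give $\mu_\epsilon^n S_n(t) = \mu_\epsilon^n$ for every $t \geq 0$. Substituting this identity into the previous display yields the first claim,
\[
\mu_\epsilon^n \;\preceq\; \mu^n S_n(t) \quad \text{for every } t \geq 0.
\]

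For the integral inequality, I would apply Proposition \ref{p1.4} to the stochastic domination just obtained and to the increasing function $f$. This gives
\[
\int f\, d\mu_\epsilon^n \;\leq\; \int f\, d(\mu^n S_n(t)) \;=\; \bb E^n_{\mu^n}\!\left[f(\eta_t^n)\right],
\]
where the last equality is just the definition of the semigroup acting on measures. One should check that the integrability hypothesis of Proposition \ref{p1.4} ($\int f\, d\mu_\epsilon^n > -\infty$) is either part of the assumptions on $f$ or can be verified case-by-case in the applications; in the present paper $f$ will typically be bounded below (indeed nonnegative), so this is not an obstacle.

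There is no real difficulty here: the entire content of the proposition is that the invariant geometric measure with parameter $\epsilon$ acts as a stochastic lower barrier that is preserved by the dynamics, which is exactly the conjunction of invariance with attractive monotonicity. The only point requiring minimal care is the direction of the domination and the fact that one is applying monotonicity of $S_n(t)$ to both measures simultaneously rather than comparing $\mu_\epsilon^n$ directly with $\mu^n S_n(t)$ through a single coupling.
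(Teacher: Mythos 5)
Your argument is exactly the one the paper intends: the paper states Proposition \ref{p1.6} without a written proof precisely because it follows immediately from the invariance of $\mu_\epsilon^n = \bar\mu_{\theta_n(\epsilon)}^n$ together with the monotonicity consequence of attractiveness ($\mu \preceq \nu \Rightarrow \mu S_n(t) \preceq \nu S_n(t)$) stated right after Proposition \ref{p1.5}, followed by an application of Proposition \ref{p1.4}. Your proposal is correct and matches this route, including the sensible remark about the integrability hypothesis in Proposition \ref{p1.4}.
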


\subsection{The hydrodynamic limit: the case $\alpha >0$}
\label{s1.5}

It turns out that the functions $\{u_t^n; n \in \bb N\}$ introduced in \eqref{ec1.4.3} are the right ones to construct the geometric product measures that serve as good approximations of the density of particles. Let $u_0$, $u_t^n$ as in Theorem \ref{t2} and define $\nu_t^n = \nu_{u_t^n(\cdot)}^n$. The main result of this manuscript is the following.

\begin{theorem}
\label{t3}
Fix $\alpha \in (0,1)$. Let $\{\mu^n; n \in \bb N\}$ be a sequence of probability measures in $\Omega_n$ such that 
\[
\lim_{n \to \infty} \frac{H_n(\mu^n|\nu_0^n)}{n} =0.
\]
Assume as well that there exists a constant $\epsilon >0$ such that $\mu_\epsilon^n \preceq \mu^n \preceq \mu_{\epsilon^{-1}}^n \ $ 
for any $n \in \bb N$.
Then, for any $0 \leq t \leq T$ we have that
\[
\lim_{n \to \infty} \frac{H_n(\mu^n S_n(t)|\nu_t^n)}{n} =0,
\]
where $T$ is the same constant appearing in Theorem \ref{t2}.
\end{theorem}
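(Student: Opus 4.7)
The plan is to use Yau's relative entropy method. Denote $H_n(t) := H_n(\mu^n S_n(t)|\nu_t^n)$ and $f_t^n := d(\mu^n S_n(t))/d\nu_t^n$. The standard Yau differential inequality reads
\[
\frac{d}{dt} H_n(t) \;\leq\; \int \left( \frac{\mc L_n^* \nu_t^n}{\nu_t^n} - \partial_t \log \nu_t^n \right) f_t^n \, d\nu_t^n \;=:\; \int V_t^n \, f_t^n\, d\nu_t^n,
\]
where $\mc L_n^*$ is the adjoint of the generator with respect to counting measure. Since $\nu_t^n$ is a product of geometrics with parameter $\theta_n(u_t^n(x))$, a direct calculation followed by a discrete integration by parts rewrites $V_t^n$ as a sum over $x \in \bb T_n$ of terms of the form $G_t^n(x)\,\bigl[g(\eta(x)) - \theta_n(u_t^n(x))\bigr]$ plus a deterministic remainder. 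The coefficients $G_t^n(x)$ are built from discrete derivatives of $\log\theta_n(u_t^n(\cdot))$ and stay $O(1)$ in $n$ thanks to the regularity and uniform positivity of $u_t^n$ supplied by Theorem \ref{t2}. The key cancellation is that the deterministic piece vanishes to leading order precisely because $u_t^n$ satisfies \eqref{ec1.4.3}: the discrete ODE is tailor-made so that the drift of $\nu_t^n$ under $\mc L_n^*$ matches $\partial_t \nu_t^n$.

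I would then apply the entropy inequality \eqref{entropy} with $\gamma$ of order $n$, reducing the task to showing $\tfrac{1}{n}\log\int e^{\gamma V_t^n}\,d\nu_t^n = o(1)$. This is handled by a two-step replacement. First insert a mesoscopic block of size $\ell_n \to \infty$ and replace $g(\eta(x))$ by its block average $\bar g_{\ell_n}(x)$ via a further discrete integration by parts; then apply the quantitative one-block estimate of Section \ref{s4} to substitute $\bar g_{\ell_n}(x)$ by $\phi_t^n(x)/n^\alpha$ up to $o(1)$ error. Because the density per site diverges like $n^\alpha$, the compactness argument in the classical proof of \cite{KipLan} is unavailable and must be replaced by the sharp spectral gap bound of \cite{Mor}, coupled with a Taylor expansion of $\phi_n$ (which is legitimate since $\phi_n'(\rho)\to\rho^{-2}$). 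The residual fluctuations of $\bar\eta_{\ell_n}(x)$ around $u_t^n(x)$ are absorbed by the concentration inequality of Section \ref{concentration}, whose proof exploits that, under $\nu_t^n$, the rescaled geometric marginals are close to exponentials.

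The attractiveness hypothesis $\mu_\epsilon^n \preceq \mu^n \preceq \mu_{\epsilon^{-1}}^n$ enters through Proposition \ref{p1.6}: both the one-block and the concentration bounds are most naturally formulated against the homogeneous measures $\mu_\rho^n$, and attractiveness lets us transfer them to expectations under $\mu^n S_n(t)$, using that $u_t^n$ stays inside a compact subinterval of $(0,\infty)$ by Theorem \ref{t2}. Assembled together, the above yields a uniform bound of the form $\tfrac{d}{dt} H_n(t) \leq C\, H_n(t) + o(n)$ on $[0,T]$, so Gronwall combined with the hypothesis $H_n(\mu^n|\nu_0^n)=o(n)$ closes the argument. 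The hardest step is the quantitative one-block estimate in the growing-density regime, where the classical compactness argument breaks down; the delicate calibration of $\ell_n$, of the entropy parameter $\gamma$, and of the singular higher derivatives of $\phi_n$ in the Taylor expansion is presumably what forces the restriction $\alpha \in (0,1)$.
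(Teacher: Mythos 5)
Your proposal follows essentially the same route as the paper: Yau's entropy inequality against the reference measures $\nu_t^n$ built from the discrete approximations $u_t^n$, a quantitative one-block estimate proved via Feynman--Kac and the spectral gap of \cite{Mor} in place of the classical compactness argument, concentration inequalities exploiting the geometric-to-exponential limit, attractiveness to transfer bounds to $\mu^n S_n(t)$, and Gronwall to close. The one loosely worded step is the claim that the one-block estimate substitutes the block average of $g$ by the deterministic $\phi_t^n(x)/n^\alpha$ ``up to $o(1)$ error'': it only yields $\phi_n(\eta_s^{n,\ell}(x))$, and bridging to the deterministic profile is exactly where the singular Taylor remainder $M_n$, the small-density cutoff (which is where the hypothesis $\mu_\epsilon^n\preceq\mu^n$ is really needed), and the comparison with the rate function $I_\rho$ do the work --- but your subsequent appeal to the concentration inequalities and the $\beta^{-1}\int_0^t H_n(s)\,ds$ Gronwall term shows you have the right mechanism in mind.
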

Notice that Theorem \ref{t2} implies that
\[
\lim_{n \to \infty} \frac{H_n(\nu_t^n| \nu_{u(t,\cdot)}^n)}{n} =0.
\]
Therefore, {\em a posteriori} we could have stated this theorem in terms of $\nu_{u(t,\cdot)}^n$. However, we want to emphasise that the measures $\nu_{u_t^n(\cdot)}^n$ are more natural as reference measures, a fact that can be useful in other situations. This situation is very common in homogenization theory, and the trick used here can be thought as a simple version of {\em compensated compactness}.

The restriction $\alpha \in (0,1)$ comes from the method we use in order to prove Theorem \ref{t3}, and it is not intrinsic to the problem. We strongly rely on the {\em spectral gap estimate} for the zero-range process, obtained in \cite{Mor}. In principle this restriction could be relaxed if more powerful techniques were available, like logarithmic Sobolev inequalities, but we did not pursue that line of reasoning further away.

\section{The relative entropy method}
\label{s2}

In this section we outline how do we prove Theorem \ref{t3} using Yau's relative entropy method \cite{Yau}. We will use various lemmas most of which will be proven in subsequent sections. Assuming the validity of these lemmas, our outline gives a rigorous proof of Theorem \ref{t1}.

Fix $\rho >0$ and recall the definition of $\mu_\rho^n$ as the geometric product measure with density of particles $n^\alpha \rho$. Let $\{\mu^n;\; n \in \bb N\}$ be as in Theorem \ref{t3} and let us write $\bb E^n = \bb E^n_{\mu^n}$.  Define
\[
f_t^n = \frac{d  \mu^n S_n(t)}{d \mu_{\rho}^n}, \quad \Upsilon_t^n = \frac{d \nu_t^n}{d \mu_\rho^n}\qquad \text{and} \qquad\;\;H_n(t) = H ( \mu^n S_n(t)| \nu_t^n)\; .
\]
 Yau's entropy inequality \cite{Yau} states that
\begin{equation}
\label{yau}
\tfrac{d}{dt} H_n(t) \leq \int \frac{f_t^n}{\Upsilon_t^n} \Big( n^{2+2\alpha} \mc L_n^\ast \Upsilon_t^n - \partial_t \Upsilon_t^n \Big) d \mu_\rho^n,
\end{equation}
where $\mc L_n^\ast $ is the adjoint of $\mc L$ in $L^2(\mu_\rho^n)$. This inequality does not rely on the particular form of the measures $\nu_t^n$, which in principle can be changed according to the needs of the model. In the model considered here $\mc L_n^\ast = \mc L_n$, but this point is not very important. The invariance of the measure $\mu_\rho^n$ under the evolution of the system is however crucial for the method. Notice that 
\[
\Upsilon_t^n =  \prod_{x \in \bb T_n} \frac{1+n^\alpha \rho}{1+n^\alpha u_t^n(x)} \Big( \frac{\phi_t^n(x)}{\phi_n(\rho)}\Big)^{\eta(x)}.
\]
After some long but standard computations (see \cite[chapter 6]{KipLan}), we have that
\[
\frac{n^{2+2\alpha} \mc L_n \Upsilon_t^n}{\Upsilon_t^n} = \sum_{x \in \bb T_n} \big\{ n^{2\alpha} g(\eta(x)) - n^\alpha \phi_t^n(x)\big\} \frac{\Delta_n \phi_t^n(x)}{\phi_t^n(x)},
\]
\[
\frac{\partial_t \Upsilon_t^n}{\Upsilon_t^n} = \sum_{x \in \bb T_n} \big\{ \eta(x) - n^\alpha u_t^n(x)\big\} \frac{\partial_t \phi_t^n(x)}{\phi_t^n(x)}.
\]
Using the fact that $\partial_t \phi_t^n(x) = \phi_n'(u_t^n(x)) \Delta_n \phi_t^n(x)$ we see that
\begin{multline*}
\frac{1}{\Upsilon_t^n} \big( n^{2+2\alpha}\mc L_n \Upsilon_t^n- \partial_t \Upsilon_t^n\big) 
		= n^\alpha \sum_{x \in \bb T_n} \frac{\Delta_n \phi_t^n(x)}{\phi_t^n(x)} \Big\{ n^\alpha g(\eta(x)) - \phi_t^n(x) - \\
		-\phi_n'(u_t^n(x)) \big(n^{-\alpha}\eta(x) - u_t^n(x)\big)\Big\}.
\end{multline*}
Recall  in \eqref{ec1.4.1} that $\int n^\alpha g(\eta(x)) d \mu_\rho^n = \phi_n(\rho)$. For each $n \in \bb N$, define $F_n: \bb T_n \times [0,\infty) \to \bb R$ as 
\begin{equation}
\label{Fn}
F_n(x,t) = \frac{n^\alpha \Delta_n \phi_t^n(x)}{\phi_t^n(x)}.
\end{equation}
At this point we can explain why Theorem \ref{t1} needs to assume that $u_0 \in \mc C^{2+\delta}(\bb T)$. In that case, the solution of the hydrodynamic equation is of class $\mc C^{2+\delta}$, uniformly in time. Therefore, by standard methods in numerical analysis, it can be checked that $F_n(x,t)$ is uniformly bounded in $n$, $x$ and $t$. And this property is a cut point for the method: if for some reason we know {\em a priori} that $F_n$ is uniformly bounded, then we can go on with the relative entropy method without further reference to the smoothness of the solution of the hydrodynamic equation \eqref{echid0}. It is proved in Section \ref{s5}, under the assumption that $u_0 \in \mc C^2(\bb T)$, a weaker property, namely that $F_n(x,t)$ stays bounded for a positive amount of time $T$, uniformly in $n, x$ and $t \leq T$:

\begin{lemma}
\label{l2.1}
Let $u_0: \bb T \to [0,\infty)$ be strictly positive and of class $\mc C^2(\bb T)$. Let $\{u_t^n(x); t \geq 0, x \in \bb N\}$ be the solution of \eqref{ec1.4.3} with initial condition $u_0^n(x) = u_0(\frac{x}{n})$. There exist constants $T >0$ and $K <+\infty$ depending only on $\|u_0\|_\infty$ and $\|\Delta u_0\|_\infty$ such that 
\[
\sup_{n \in \bb N} \sup_{x \in \bb T_n} \sup_{0\leq t \leq T} \big| F_n(x,t)\big| \leq K.
\]
\end{lemma}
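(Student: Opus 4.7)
\medskip\noindent\emph{Proof plan.}

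The starting point is an algebraic reduction. Since $\phi_n(u) = n^\alpha - (u+n^{-\alpha})^{-1}$, the substitution
\[
w_t^n(x) \;:=\; \frac{1}{u_t^n(x)+n^{-\alpha}}
\]
makes $\phi_t^n(x) = n^\alpha - w_t^n(x)$ and turns \eqref{ec1.4.3} into $\partial_t w_t^n = (w_t^n)^2 \Delta_n w_t^n$. In particular,
\[
F_n(x,t) \;=\; -\,\frac{n^\alpha \Delta_n w_t^n(x)}{n^\alpha - w_t^n(x)},
\]
so the lemma is reduced to producing bounds, uniform in $(n,x,t\in[0,T])$, on $\|w_t^n\|_\infty$ from above and away from $0$, and on $\|\Delta_n w_t^n\|_\infty$.

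The bound on $w_t^n$ comes from the discrete maximum principle applied to $u_t^n$. Because $\phi_n$ is increasing, at an argmax $x^\ast$ of $u_t^n(\cdot)$ one has $\Delta_n \phi_n(u_t^n)(x^\ast) \leq 0$, so $\max_x u_t^n$ is non-increasing; symmetrically, $\min_x u_t^n$ is non-decreasing. Hence $u_t^n$ stays in $[\inf u_0,\|u_0\|_\infty]$ and $w_t^n$ in some $[w_\ast,w^\ast]\subset(0,\infty)$, uniformly in $n$.

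For $\|\Delta_n w_t^n\|_\infty$ I would differentiate the $w$-equation in time. Setting $b_t^n := \partial_t w_t^n$ and using $\Delta_n w = b/w^2$, a direct computation yields
\[
\partial_t b_t^n \;=\; w^2\, \Delta_n b_t^n \;+\; \frac{2\,(b_t^n)^2}{w}.
\]
Since $\Delta_n b_t^n$ is non-positive at an argmax and non-negative at an argmin, a standard envelope argument gives
\[
\frac{d}{dt}\max_x b_t^n(x) \;\leq\; \frac{2}{w_\ast}\Big(\max_x b_t^n(x)\Big)^2, \qquad \frac{d}{dt}\min_x b_t^n(x) \;\geq\; 0,
\]
and comparison with the scalar Riccati equation $y'=(2/w_\ast) y^2$ produces a time $T>0$, depending only on $\max_x b_0^n(x)$ and $w_\ast$, on which $\max_x b_t^n$ remains bounded; together with $\min_x b_t^n \geq \min_x b_0^n$ this controls $\|b_t^n\|_\infty$ and hence $\|\Delta_n w_t^n\|_\infty = \|b_t^n/w^2\|_\infty$ on $[0,T]$.

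Finally, the initial quantity $\|b_0^n\|_\infty$ must be uniform in $n$, and this is precisely where the hypothesis $u_0\in\mc C^2(\bb T)$ enters: since $u_0 \geq \epsilon$, the function $1/(u_0+n^{-\alpha})$ lies in $\mc C^2(\bb T)$ with a $\mc C^2$-norm bounded independently of $n$, so a one-line Taylor estimate gives $\|\Delta_n w_0^n\|_\infty \leq C$ with $C$ depending only on $\epsilon$, $\|u_0\|_\infty$ and $\|\Delta u_0\|_\infty$. Combining all the bounds with the expression for $F_n$ (and using $n^\alpha - w_t^n \geq n^\alpha/2$ for $n$ large) proves the lemma. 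The sole genuine obstruction is the positive source $2b^2/w$ in the evolution of $b_t^n$: it may drive $\max_x b_t^n$ to finite-time blow-up, which is why the estimate is only short-time and why the horizon $T$ must depend on the $\mc C^2$ size of $u_0$.
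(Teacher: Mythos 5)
Your proof is correct and is essentially the paper's own argument in disguise: with $w=(u+n^{-\alpha})^{-1}$ your quantity $b_t^n=\partial_t w_t^n$ equals $-\psi_t^n=-\phi_n'(u_t^n)\Delta_n\phi_t^n$, which is exactly what the paper tracks, and your Riccati comparison $\frac{d}{dt}\max b\le \frac{2}{w_\ast}(\max b)^2$ coincides with the paper's $\frac{d}{dt}\Psi_n\le 2(n^{-\alpha}+\bar u_n)\Psi_n^2$. The remaining ingredients (discrete maximum principle for $u_t^n$, uniform bound on the initial discrete Laplacian from $u_0\in\mc C^2$ and $\inf u_0>0$) are likewise the same as in Section \ref{s5}.
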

  
  \vspace{0.5cm}

Integrating Yau's entropy inequality \eqref{yau} in time we see that
\begin{multline}
\label{ec2.1}
H_n(t) \leq H_n(0) + \bb E^n\Big[ \int_0^t \sum_{x \in \bb T_n} F_n(x,t) \big\{ n^\alpha g(\eta_s^n(x)) - \phi_n(u_s^n(x)) -\\
- \phi_n'(u_s^n(x)) \big(n^{-\alpha} \eta_s^n(x) - u_s^n(x)\big)\big\}ds\Big].
\end{multline}
The idea is to bound this integral by $\frac{1}{\beta}\int_0^t H_n(s) ds$ plus a term of order $\omicron(n)$. 
If we are able to do this, Theorem \ref{t3} will follow after the use of Gronwall's inequality and a concentration inequality. A key step will be the use of the entropy inequality \eqref{entropy}.
But before using the entropy inequality we need to replace the function $g(\eta_t^n(x))$ by a function that concentrates around its mean with respect to the measure $\nu_t^n$.

The first step into this program is what is known as the {\em one-block estimate}. Before stating this estimate, we need to introduce some definitions. For $\ell \leq n$ in $\bb N$, $x \in \bb T_n$ and $t \geq 0$ we define
\begin{equation}
\label{etanl}
\eta_t^{n,\ell}(x) = \frac{1}{n^\alpha\ell} \sum_{i=1}^\ell \eta_t^n(x+i).
\end{equation}
In other words, $\eta_t^{n,\ell}(x)$ is the density of particles on a box of size $\ell$ at the right of $x \in \bb T_n$, normalized by $n^\alpha$.
\begin{lemma}
\label{l1}
Let $F_n: \bb T_n \times [0,\infty) \to \bb R$ be defined as above. For $\alpha, \delta >0$ satisfying $ 2 \alpha+3 \delta < 2$, we have
\[
\lim_{n \to \infty} n^{-1} \bb E^n\Big[ \Big| \int_0^t \sum_{x \in \bb T_n} F_n(x,s) \big\{ n^\alpha g(\eta_s^n(x)) - \phi_n(\eta_s^{n,\ell}(x))\big\} ds\Big| \Big] =0
\]
uniformly in $t \in [0,T]$, where $\ell = \ell_n = n^\delta$.
\end{lemma}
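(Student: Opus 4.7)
The plan is to follow the quantitative one-block strategy of \cite{JarLanSet}, using the Morris spectral gap \cite{Mor} to replace the compactness argument of the classical proof (which fails since the local density $n^\alpha\rho$ is unbounded). First, pass from $\bb E^n_{\mu^n}$ to a reference product measure. Since $|a|\leq\gamma^{-1}\log(e^{\gamma a}+e^{-\gamma a})$ it suffices to handle the two one-sided bounds, and applying the entropy inequality \eqref{entropy} at the path-space level with $\gamma=n$ transfers the expectation to $\bb E^n_{\mu^n_\rho}$ at an entropic cost $n^{-1}H_n(\mu^n|\mu^n_\rho)=O(1)$, which is under control thanks to the stochastic domination $\mu^n_\epsilon\preceq\mu^n\preceq\mu^n_{\epsilon^{-1}}$ assumed in Theorem \ref{t3}. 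It then remains to bound, for each sign,
\[
\tfrac{1}{n}\log\bb E_{\mu^n_\rho}^n\!\Big[\exp\!\Big(\pm n\!\int_0^t V_s(\eta^n_s)\,ds\Big)\Big],\qquad V_s(\eta)=\sum_x F_n(x,s)\big\{n^\alpha g(\eta(x))-\phi_n(\eta^{n,\ell}(x))\big\}.
\]
By Feynman--Kac this is at most $n^{-1}\int_0^t\lambda_n^\pm(s)\,ds$, where $\lambda_n^\pm(s)$ is the principal eigenvalue of $\pm nV_s+n^{2+2\alpha}\mc L_n$ in $L^2(\mu^n_\rho)$, given by Rayleigh--Ritz as $\sup_f\{\pm n\!\int V_s f^2\,d\mu^n_\rho-n^{2+2\alpha}D_n(f)\}$.

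Next, I would perform a block decomposition. Write $\bb T_n$ as a disjoint union of blocks $B_j$ of length $\ell$. The bound $|F_n|\leq K$ of Lemma \ref{l2.1}, together with a summation-by-parts that exploits the slow spatial variation of $F_n$ at scale $\ell/n$ (which follows from the $\mc C^2$-bounds on $u^n_t$ proved in Section \ref{s5}), allows one to replace the pointwise $g(\eta(x))$ by the block average $\bar g_{B_j}=\ell^{-1}\sum_{y\in B_j}g(\eta(y))$ up to a negligible remainder. The product structure of $\mu^n_\rho$ and the additivity of $D_n$ over bonds decouple the variational problem into the blocks, reducing the task to bounding, for a representative block $B$ of size $\ell$,
\[
\sup_{h}\Big\{C\,n\,\bb E_{\mu^n_{\rho,B}}\!\big[(n^\alpha\bar g_B-\phi_n(\bar\eta_B))\,h^2\big]-\tfrac{n^{2+2\alpha}}{\ell}D_B(h)\Big\},
\]
where $\bar\eta_B$ is the $n^\alpha$-normalized density in $B$ and $D_B$ is the local Dirichlet form.

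Finally, conditioning on the total number $K$ of particles in $B$, the restricted measure becomes the canonical $\mu_{B,K}$; the centering $\phi_n(\bar\eta_B)$ equals $\mathbb E_{\mu_{B,K}}[n^\alpha g(\eta(0))]$, and Morris's spectral gap inequality yields
\[
\mathrm{Var}_{\mu_{B,K}}(n^\alpha\bar g_B)\leq\mathrm{gap}(\ell,K)^{-1}\,D_{B,K}(n^\alpha\bar g_B).
\]
A direct computation gives $D_{B,K}(n^\alpha\bar g_B)=O(n^{2\alpha}/\ell)$ (a single bond-jump changes $\bar g_B$ by $O(1/\ell)$ and there are $O(\ell)$ bonds), while \cite{Mor} provides an inverse gap polynomial in $\ell$ and in the local density $n^\alpha\rho$. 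Applying Cauchy--Schwarz to control the cross term in the variational problem and optimising the free parameter against the $n^{2+2\alpha}/\ell$ Dirichlet weight, each block contributes $o(\ell/n)$ to $\lambda_n^\pm(s)$; summing over the $n/\ell$ blocks and integrating uniformly in $s\in[0,T]$ delivers the required $o(1)$ bound, which implies the lemma.

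The main obstacle is the calibration of competing scales in this last variational estimate: the factor $n^{2\alpha}$ present both in $V_s$ and in $D_{B,K}(n^\alpha\bar g_B)$, the block size $\ell=n^\delta$, the polynomial density-dependence of the Morris gap, and the $n^{2+2\alpha}$ prefactor of $D_n$ must all fit together. The condition $2\alpha+3\delta<2$ is precisely what this accounting requires, and combined with $\delta>0$ (needed so that $\ell\to\infty$) it is what ultimately enforces the range $\alpha\in(0,1)$ of Theorem \ref{t3}.
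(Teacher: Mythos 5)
Your overall architecture---entropy inequality plus Feynman--Kac to reduce to a principal-eigenvalue bound, localization of the variational problem to boxes of size $\ell$, and Morris's spectral gap on the canonical ensembles---is the same as the paper's, but three of your concrete steps do not survive scrutiny.

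First, the replacement of $g(\eta_s^n(x))$ by a block average cannot be achieved by a deterministic summation by parts against $F_n$. After the $n^{-1}$ normalization the resulting error is of order $n^{\alpha}\sup_{|i|\le \ell}|F_n(x,s)-F_n(x-i,s)|$, and Section \ref{s5} only provides H\"older-$\tfrac12$ control (Lemma \ref{l3b}), not Lipschitz or $\mc C^2$ bounds; this gives $n^{\alpha+\delta/2-1/2}$, which diverges as soon as $2\alpha+\delta\ge 1$, a region well inside the admissible range $2\alpha+3\delta<2$. The paper performs this replacement \emph{inside} the variational formula: the change of variables $\bb E_{\mu^n_\rho}[(g(\eta(x))-g(\eta(x+i)))f^2(\eta)]=\bb E_{\mu^n_\rho}[g(\eta(x))(f^2(\eta^{x,x+i})-f^2(\eta))]$ and Young's inequality let the difference be absorbed by the Dirichlet form, whose prefactor $n^{1+\alpha}/\lambda$ is what renders the error $O(\lambda\ell^2/n^2)$.

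Second, your centering identity is false: on the canonical ensemble of a box of size $\ell$ the conditional expectation of $g(\eta(0))$ equals $n^\alpha\bar\eta/(1-\ell^{-1}+n^\alpha\bar\eta)$, not $\phi(n^\alpha\bar\eta)$ (see \eqref{equivensem}); the discrepancy, once multiplied by $n^\alpha$, is of order $n^{\alpha}/(\ell(1+n^\alpha\bar\eta))$, and $n^{\alpha-\delta}$ need not vanish, so one must rule out small block densities via the concentration inequalities of Section \ref{concentration}. Exact centering on each hyperplane $\Sigma_{k,\ell}$ is indispensable for the perturbative eigenvalue bound, so this correction cannot be waved away. Third, you invoke the spectral gap in the wrong direction: a Poincar\'e bound for $\mathrm{Var}(n^\alpha\bar g_B)$ in terms of $D_{B,K}(n^\alpha\bar g_B)$ plays no role in estimating $\sup_h\{\langle Vh^2\rangle-\beta D(h)\}$; what is needed is the $H_{-1}$ bound $\langle W,(-L_\ell)^{-1}W\rangle_{k,\ell}\le \mathrm{gap}^{-1}\langle W,W\rangle_{k,\ell}$ for the centered, uniformly bounded observable $W$, together with an a priori cutoff $k\le Mn^\alpha\ell$ on the particle number per box (obtained via attractiveness and \eqref{ec3.1}) so that $\mathrm{gap}^{-1}=O(\ell^2n^{2\alpha})$. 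Only with these ingredients does the exponent count close to $n^{2\alpha+3\delta-2}$; as written, your ``each block contributes $o(\ell/n)$'' is asserted rather than derived. A smaller point: taking $\gamma=n$ in the entropy inequality leaves a fixed $O(1)$ cost; you need $\gamma=\lambda n$ with $\lambda\to\infty$ at the end.
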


The preceding result will be proved in Section \ref{s4}. Using this lemma, we see that we only need to bound the expectation
\[
\bb E^n\Big[ \int_0^t \sum_{x \in \bb T_n} F_n(x,t) \big\{ \phi_n(\eta_s^{n,\ell}(x)) - \phi_n(u_s^n(x))
- \phi_n'(u_s^n(x)) \big( n^{-\alpha}\eta_s^n(x) -u_s^n(x)\big)\big\}ds\Big]. 
\]

The advantage of this expectation with respect to the one appearing in \eqref{ec2.1} is that we have introduced a function of the density of particles, which we know it concentrates around its mean with respect to the measures $\nu_t^n$. We need to do something similar with the function $\eta_s^n(x)$:

\begin{lemma}
\label{l2}
Let $F_n$ be as above and $\ell = \ell_n = n^\delta$ for $0<\delta<1$. Then
\begin{equation}
\label{1halfblock}
\lim_{n \to \infty}  \bb E^n \Big[ \Big| \int_0^t \frac{1}{n} \sum_{x \in \bb T_n} F_n(x,s) \phi_n'(u_s^n(x))\big\{ n^{-\alpha}\eta_s^n(x) -  \eta_s^{n,\ell}(x)\big\} ds \Big| \Big] =0
\end{equation}
uniformly in $t \in [0,T]$.
\end{lemma}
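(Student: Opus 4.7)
The plan is to carry out a summation by parts (Abel transformation) that moves the nonlocal difference $n^{-\alpha}\eta_s^n(x) - \eta_s^{n,\ell}(x)$ from the random field $\eta_s^n$ onto the smooth coefficient $a_s(x) := F_n(x,s)\phi_n'(u_s^n(x))$, and then to combine attractiveness (Proposition \ref{p1.6}) with uniform discrete regularity of $a_s$ to close the estimate. Writing $n^{-\alpha}\eta_s^n(x) - \eta_s^{n,\ell}(x) = \frac{1}{n^\alpha \ell}\sum_{i=1}^\ell \bigl(\eta_s^n(x) - \eta_s^n(x+i)\bigr)$ and reindexing in the second sum yields the identity
\begin{equation*}
\frac{1}{n}\sum_{x \in \bb T_n} a_s(x)\bigl\{n^{-\alpha}\eta_s^n(x) - \eta_s^{n,\ell}(x)\bigr\} = \frac{1}{n^{1+\alpha}\ell}\sum_{y \in \bb T_n} \eta_s^n(y) \sum_{i=1}^\ell \bigl(a_s(y) - a_s(y-i)\bigr).
\end{equation*}
Attractiveness now handles the random factor: since $\mu^n \preceq \mu_{\epsilon^{-1}}^n$ and $\mu_{\epsilon^{-1}}^n$ is invariant, Proposition \ref{p1.6} applied to the increasing function $\eta \mapsto \eta(y)$ gives $\bb E^n[\eta_s^n(y)] \leq \epsilon^{-1} n^\alpha$ for every $y \in \bb T_n$ and every $s \geq 0$.

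The remaining task will be to control the discrete variation $\sum_{i=1}^\ell |a_s(y) - a_s(y-i)|$. Using the explicit expressions $\phi_n(u) = n^\alpha - (u + n^{-\alpha})^{-1}$ and $\phi_n'(u) = (u + n^{-\alpha})^{-2}$, one checks that $a_s(x)$ is a smooth function of $u_s^n(x-1), u_s^n(x), u_s^n(x+1)$ whose partial derivatives in each argument are uniformly bounded on $[0,T]$, provided $u_s^n$ stays bounded away from $0$ and from $\infty$ (as it does by Theorem \ref{t2}). Consequently the one-step gradient $|a_s(y) - a_s(y-1)|$ is controlled by the discrete gradients of $u_s^n$ and of $\Delta_n u_s^n$ at nearby sites. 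I will therefore establish, inside Section \ref{s5} in parallel with Theorem \ref{t2} and Lemma \ref{l2.1}, the discrete-$\mc C^3$ type bound
\begin{equation*}
\sup_{0 \leq s \leq T}\sup_{y \in \bb T_n}\bigl(n |u_s^n(y) - u_s^n(y-1)| + n |\Delta_n u_s^n(y) - \Delta_n u_s^n(y-1)|\bigr) \leq C,
\end{equation*}
by propagating regularity from $u_0 \in \mc C^2(\bb T)$ along the discrete system \eqref{ec1.4.3} via the same maximum-principle/contraction arguments underlying Lemma \ref{l2.1}. This gives $|a_s(y) - a_s(y-1)| \leq C/n$ uniformly and hence $\sum_{i=1}^\ell |a_s(y) - a_s(y-i)| \leq C\ell^2/n$.

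Putting the three ingredients together, the integrand is bounded in expectation by
\begin{equation*}
\bb E^n\Bigl[\Bigl| \tfrac{1}{n}\sum_x a_s(x)\{n^{-\alpha}\eta_s^n(x) - \eta_s^{n,\ell}(x)\} \Bigr|\Bigr] \leq \frac{1}{n^{1+\alpha}\ell}\cdot n \cdot \epsilon^{-1} n^\alpha \cdot \frac{C\ell^2}{n} = \frac{C\ell}{\epsilon n} = O(n^{\delta - 1}),
\end{equation*}
uniformly in $s \in [0,T]$, and integrating over $s \in [0,t]$ with $t \leq T$ yields the claimed $o(1)$ limit, since $\delta < 1$. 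The real obstacle is the discrete-$\mc C^3$ estimate on $u_s^n$; everything else is just the summation-by-parts identity together with attractiveness, and in particular neither the entropy inequality nor the spectral gap enter the proof, which is why Lemma \ref{l2} should be substantially less delicate than the one-block estimate of Lemma \ref{l1}.
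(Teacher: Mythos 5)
Your summation-by-parts identity and your use of attractiveness to get $\bb E^n[\eta_s^n(y)]\le \epsilon^{-1}n^\alpha$ are exactly the paper's first two steps. The gap is the ingredient you yourself flag as ``the real obstacle'': the discrete-$\mc C^3$ bound $\sup_{s\le T}\sup_y n\,|\Delta_n u_s^n(y)-\Delta_n u_s^n(y-1)|\le C$ is not available under the standing hypothesis $u_0\in\mc C^2(\bb T)$, and it already fails at $s=0$. Indeed $\Delta_n u_0^n(y)$ is a weighted average of $u_0''$ over an interval of length $2/n$ around $y/n$, so $n\,|\Delta_n u_0^n(y)-\Delta_n u_0^n(y-1)|$ is of order $n$ times the oscillation of $u_0''$ on an interval of length $3/n$, which need not be bounded when $u_0''$ is merely continuous (take $u_0''$ H\"older-$\frac12$ but not Lipschitz). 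No maximum-principle or contraction argument will recover at positive times a uniform-in-$n$ third-difference bound that is absent at time zero, and the regularity theory of Section \ref{s5} only controls second differences (Lemma \ref{l5.5}) and an energy of the field $\psi_t^n=\phi_n'(u_t^n)\Delta_n\phi_t^n$ (Remark \ref{energy2}).

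The good news is that your Lipschitz requirement is much stronger than needed. Writing $a_s(x)=F_n(x,s)\phi_n'(u_s^n(x))$, your identity reduces the integrand to at most $\epsilon^{-1}\sup_y \frac1\ell\sum_{i=1}^\ell |a_s(y)-a_s(y-i)|$, so any uniform modulus of continuity $|a_s(y)-a_s(y-i)|\le\omega(i/n)$ with $\omega(\ell/n)\to 0$ suffices; the Lipschitz rate buys nothing because the prefactor $\ell^{-1}$ absorbs the sum over $i$. This is precisely what the paper supplies: Lemma \ref{l3b} gives $|a_s(y)-a_s(x)|\le K|\tfrac{y-x}{n}|^{1/2}$, deduced from the energy estimate for $\psi_t^n$ in Remark \ref{energy2} together with the discrete Poincar\'e inequality of Lemma \ref{l5.4}, a Cauchy-Schwarz argument that only requires the Laplacian bound of Lemma \ref{l5.5} and hence only $u_0\in\mc C^2(\bb T)$. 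Substituting this H\"older-$\frac12$ bound for your deferred $\mc C^3$ estimate yields $O((\ell/n)^{1/2})=O(n^{(\delta-1)/2})$ and closes the proof along essentially the paper's route.
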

To prove this result first perform a summation by parts replacing the expectation in  \eqref{1halfblock} by
$$ \bb E^n \Big[ \Big| \int_0^t \frac{1}{n^{1+\alpha}} \sum_{x \in \bb T_n}\eta_s^n(x)\frac 1\ell\sum_{y=1}^\ell \big\{ F_n(x,s) \phi_n'(u_s^n(x)) -   F_n(x-y,s) \phi_n'(u_s^n(x-y))\big\} ds \Big| \Big], $$
and then use the next result which follows directly from Remark \ref{energy2} in Section \ref{s5}.

\begin{lemma}
\label{l3b}Assume $u_0 \in \mc C^2(\bb T)$. 
Then there exists a finite constant $K$ such that
\[
\big|F_n(y,t) \phi_n'(u_t^n(y)) - F_n(x,t) \phi_n'(u_t^n(x))\big| \leq K\big|\tfrac{y-x}{n}\big|^{\frac{1}{2}},
\]
for any $n \in \bb N$, any $x,y \in \bb T_n$ and any $0 \leq t \leq T$.
\end{lemma}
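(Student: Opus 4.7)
Set $G_n(x,t) := F_n(x,t)\,\phi_n'(u_t^n(x))$. The strategy is to deduce the claimed $\tfrac12$-Hölder bound from a discrete $H^1$-type energy estimate on $G_n$ of the form
\[
\sum_{x \in \bb T_n} \big(G_n(x+1,t) - G_n(x,t)\big)^2 \;\leq\; \frac{C}{n}, \qquad 0 \leq t \leq T,
\]
which is what Remark \ref{energy2} in Section \ref{s5} is supposed to deliver. This scaling is the correct one: if $G_n(\cdot,t)$ discretizes a function $G(t,\cdot)$ on $\bb T$ that lies uniformly in $H^1(\bb T)$ on $[0,T]$, then a Riemann-sum computation gives $\sum_x (G_n(x+1,t) - G_n(x,t))^2 \approx n^{-1}\int_{\bb T}(\partial_x G(t,\cdot))^2\,dx = O(1/n)$.

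Given this energy estimate, the Hölder bound is immediate. Writing the difference along a shortest arc in $\bb T_n$ from $x$ to $y$ as a telescoping sum and applying the Cauchy-Schwarz inequality,
\[
|G_n(y,t) - G_n(x,t)|^2 \;\leq\; |y-x| \sum_{z \in \bb T_n} \big(G_n(z+1,t) - G_n(z,t)\big)^2 \;\leq\; \frac{C\,|y-x|}{n},
\]
and taking square roots gives $|G_n(y,t) - G_n(x,t)| \leq \sqrt{C}\,|(y-x)/n|^{1/2}$, which is the stated inequality with $K = \sqrt{C}$.

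\textbf{Main obstacle.} All the substance lies in the energy estimate, which is deferred to Section \ref{s5}. Since
\[
G_n(x,t) \;=\; \frac{n^\alpha\,\Delta_n\phi_t^n(x)}{\phi_t^n(x)}\,\phi_n'(u_t^n(x)),
\]
controlling $\nabla_n G_n$ in $\ell^2$ amounts to an estimate on essentially a \emph{third} discrete derivative of $\phi_n(u_t^n)$, i.e.\ an $H^3$-type bound on $u_t^n$. The natural route is to differentiate the discrete ODE $\tfrac{d}{dt}u_t^n(x) = \Delta_n\phi_n(u_t^n(x))$ once or twice in space and run a discrete parabolic energy argument for $\nabla_n u_t^n$ and $\Delta_n u_t^n$, using that $\phi_n'$ is bounded above and below by positive constants on any compact subinterval of $(0,\infty)$. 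The needed uniform confinement of $u_t^n$ in such a compact subinterval for $0 \leq t \leq T$ is provided by Lemma \ref{l2.1} together with the propagated two-sided bounds $\epsilon \leq u_t^n \leq \epsilon^{-1}$ coming from attractiveness (compare the hypothesis $\mu_\epsilon^n \preceq \mu^n \preceq \mu_{\epsilon^{-1}}^n$ in Theorem \ref{t3}), and gives the desired bound uniformly in $t \in [0,T]$ for the same $T$ as in Lemma \ref{l2.1}.
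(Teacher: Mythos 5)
Your reduction of the H\"older bound to a discrete $H^1$ estimate $\sum_x (G_n(x+1,t)-G_n(x,t))^2 \le C/n$, via telescoping and Cauchy--Schwarz, is exactly the mechanism the paper uses (it is Lemma \ref{l5.4}), and you correctly locate all of the substance in that energy estimate. The gap is that you never prove it, and the route you sketch is not the one that works. The paper does not run a parabolic energy argument on $\nabla_n u_t^n$ and $\Delta_n u_t^n$: an energy identity at the level of $\Delta_n u_t^n$ controls third differences only through a time-integrated dissipation term, which does not yield the pointwise-in-time $\ell^2$ bound on $\nabla_n G_n$ that your Cauchy--Schwarz step requires. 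The key observation you are missing is the identity
\[
F_n(x,t)\,\phi_n'(u_t^n(x)) \;=\; \frac{n^{\alpha}}{\phi_t^n(x)}\,\psi_t^n(x) \;=\; \Big(1+\frac{n^{-\alpha}}{u_t^n(x)}\Big)\,\psi_t^n(x),
\qquad \psi_t^n(x)=\phi_n'(u_t^n(x))\,\Delta_n\phi_t^n(x),
\]
where $\psi_t^n$ is precisely the quantity of Remark \ref{energy2}: it satisfies the \emph{closed} evolution equation \eqref{EL}, so one can differentiate its energy $\tilde{\mc E_n}$ in time and bound the growth rate by a constant using the a priori sup bound $\|\psi_t^n\|_\infty\le 2\bar{\Psi}_n$ from Lemma \ref{l5.5}; this gives $\sum_x n\big(\psi_t^n(x+1)-\psi_t^n(x)\big)^2\le C$ on $[0,T]$ and hence the $\tfrac12$-H\"older bound for $\psi_t^n$. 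The lemma then follows by the product rule for H\"older functions: the prefactor $1+n^{-\alpha}/u_t^n$ is bounded by $1+\epsilon^{-1}$ and is $\tfrac12$-H\"older with a uniform constant by Lemma \ref{l3} and the lower bound $u_t^n\ge\epsilon$, while $\psi_t^n$ is bounded by Lemma \ref{l5.5}. Without this identity, or an actual proof of the $H^3$-type estimate you invoke, the argument is incomplete.

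A smaller point: the two-sided confinement $\epsilon\le u_t^n\le \epsilon^{-1}$ of the deterministic discretization follows from the weak maximum principle (Corollary \ref{c5.1}) applied to the initial bound $u_0\ge\epsilon$; it has nothing to do with attractiveness of the particle system or with the stochastic domination hypothesis $\mu_\epsilon^n\preceq\mu^n\preceq\mu_{\epsilon^{-1}}^n$ of Theorem \ref{t3}, which concerns the random configurations and not the solution of the ODE system \eqref{ec1.4.3}.
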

 
Using Lemma \ref{l2}, we just need to bound the expectation
\begin{equation}
\label{ec2.3}
\bb E^n\Big[ \int_0^t \sum_{x \in \bb T_n} F_n(x,t) \big\{ \phi_n(\eta_s^{n,\ell}(x)) -\phi_n(u_s^n(x))
-\phi_n'(u_s^n(x)) \big( \eta_s^{n,\ell}(x) -u_s^n(x)\big)\big\}ds\Big].
\end{equation}
For $\ell \leq n$ in $\bb N$ and $x \in \bb T_n$, define
\[
u_t^{n,\ell}(x) = \frac{1}{\ell} \sum_{i=1}^\ell u_t^n(x+i).
\]
Notice that under $\nu_t^n$ the expectation of $\eta_t^{n,\ell}(x)$ is equal to $u_t^{n,\ell}(x)$ and not $u_t^n(x)$. Therefore, it seems to be a good idea to replace $u_t^n(x)$ by $u_t^{n,\ell}(x)$ in \eqref{ec2.3}. This is accomplished using the following result which is a straightforward consequence of Lemma \ref{l5.4} proved in Section \ref{s5}.

\begin{lemma}
\label{l3}Assume $u_0 \in \mc C^1(\bb T)$. 
Then there exists a finite constant $K_0$ such that
\[
\big|u_t^n(y) - u_t^n(x)\big| \leq K_0\big|\tfrac{y-x}{n}\big|^{\frac{1}{2}},
\]
for any $n \in \bb N$, $x,y \in \bb T_n$ and  $t \geq 0$.
\end{lemma}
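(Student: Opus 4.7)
The statement is the discrete analog of the Sobolev embedding $H^1(\bb T) \hookrightarrow C^{1/2}(\bb T)$, so my plan is to reduce it, via Cauchy--Schwarz, to a uniform discrete $H^1$ bound, which is precisely the content of Lemma \ref{l5.4}.

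The reduction itself is elementary. For $x,y \in \bb T_n$, writing $u_t^n(y) - u_t^n(x)$ as a telescoping sum of $|y-x|$ consecutive (cyclic) increments and applying Cauchy--Schwarz gives
\[
\bigl|u_t^n(y) - u_t^n(x)\bigr|^2 \;\leq\; |y-x| \sum_{z \in \bb T_n} \bigl(u_t^n(z+1) - u_t^n(z)\bigr)^2.
\]
The conclusion $|u_t^n(y) - u_t^n(x)| \leq K_0 |(y-x)/n|^{1/2}$ then follows at once from the estimate
\[
\sup_{n \in \bb N}\;\sup_{t \geq 0}\; n\sum_{z \in \bb T_n}\bigl(u_t^n(z+1) - u_t^n(z)\bigr)^2 \;\leq\; K_0^2,
\]
whose $n$-scaling is exactly the natural one: for a smooth profile sampled at mesh $1/n$ the increments are of size $1/n$ and so the sum is of size $1/n$. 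Obtaining this bound is the job of Lemma \ref{l5.4}.

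Second, I would sketch how Lemma \ref{l5.4} is to be established by a standard discrete energy argument. The initial datum is easy to handle because $u_0 \in \mc C^1(\bb T)$ immediately yields $n\sum_z(u_0(\tfrac{z+1}{n}) - u_0(\tfrac{z}{n}))^2 \leq \|u_0'\|_\infty^2$, and the derivative of $\sum_z(u_t^n(z+1) - u_t^n(z))^2$ along the flow \eqref{ec1.4.3} can, after a summation by parts, be rewritten as a dissipation term governed by the factor $\phi_n'(u_t^n)$. The main obstacle, which is genuinely hidden in Lemma \ref{l5.4}, is that $\phi_n'(u) = n^{2\alpha}/(1+n^\alpha u)^2$ depends sharply on $n$: without an $n$-uniform lower bound on $u_t^n$, the dissipation would degenerate and the discrete $H^1$ seminorm could not be controlled uniformly in $n$. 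This is precisely where the standing hypothesis that $u_0$ is bounded away from zero (which, in the context in which Lemma \ref{l3} is applied, is inherited from Theorem \ref{t3} via the attractiveness bound $\mu_\epsilon^n \preceq \mu^n$ and Theorem \ref{t2}) is used to pin $\phi_n'(u_t^n)$ away from $0$ uniformly in $n$ and $t \in [0,T]$, making the energy identity into a genuine $H^1$ control.
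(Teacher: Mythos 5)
Your Cauchy--Schwarz reduction is fine, but the energy argument you sketch for the uniform discrete $H^1$ bound on $u_t^n$ is the step that fails, and it is also not what Lemma \ref{l5.4} provides: that lemma is only the Cauchy--Schwarz step itself, applied to $\phi_n(u)$ rather than to $u$. The actual a priori bound in the paper comes from Lemma \ref{l5.3}, which states that the energy
\[
\mc E_n(u) = n\sum_{x\in\bb T_n}\big(\phi_n(u(x+1))-\phi_n(u(x))\big)^2
\]
is non-increasing along the flow, because
\[
\tfrac{d}{dt}\,\mc E_n(u_t^n) = -\tfrac{2}{n}\sum_{x}\phi_n'(u_t^n(x))\big(\Delta_n\phi_t^n(x)\big)^2\;\le\;0 .
\]
The analogous computation for the quantity you propose, $n\sum_x(u_t^n(x+1)-u_t^n(x))^2$, gives after summation by parts
\[
-\tfrac{2}{n}\sum_{x}\Delta_n u_t^n(x)\,\Delta_n\phi_t^n(x),
\]
and this has no sign: since $\phi_n$ is nonlinear (concave), $\Delta_n u$ and $\Delta_n\phi_n(u)$ can have opposite signs (for values $0,1,100$ at three consecutive sites the second difference of $u$ is positive while that of $\phi(u)=u/(1+u)$ is negative); in the continuum the same computation produces the signless term $-\int\phi''(u)(\partial_xu)^2\partial_x^2u\,dx$. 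So the discrete $H^1$ seminorm of $u$ itself is not a Lyapunov functional, and the ``dissipation governed by $\phi_n'$'' you invoke does not materialize. Absorbing the bad term would require an $L^\infty$ bound on $\Delta_n\phi_t^n$, which is available only for $t\le T$ (Lemma \ref{l5.5}), whereas Lemma \ref{l3} is asserted for all $t\ge 0$.

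The repair is the paper's route: run the monotone energy argument on $\phi_n(u_t^n)$, obtain $|\phi_n(u_t^n(y))-\phi_n(u_t^n(x))|\le\mc E_n(u_0^n)^{1/2}\,|\tfrac{y-x}{n}|^{1/2}$ from Lemmas \ref{l5.3} and \ref{l5.4}, and translate back to $u$ via the identity $\phi_n(v)-\phi_n(u)=(v-u)/[(n^{-\alpha}+u)(n^{-\alpha}+v)]$. The weak maximum principle (Corollary \ref{c5.1}) gives $u_t^n\le\|u_0\|_\infty$ for all $t$, hence $|v-u|\le(1+\|u_0\|_\infty)^2|\phi_n(v)-\phi_n(u)|$, while the standing lower bound $u_0\ge\epsilon$ is needed only at time zero, to get $\mc E_n(u_0^n)\le\epsilon^{-4}\|u_0'\|_\infty^2$ uniformly in $n$; your diagnosis that the lower bound enters by pinning $\phi_n'(u_t^n)$ away from zero along the flow is therefore also off the mark. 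With these ingredients your intermediate claim $\sup_{n,t}\,n\sum_x(u_t^n(x+1)-u_t^n(x))^2<\infty$ does become true, but as a corollary of the $\phi_n$-energy bound, not of a direct dissipation identity for $u$.
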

Using this lemma and the coupling inequality 
we see that we are left to obtain a convenient bound for the expectation
\begin{equation}
\label{ec2.3.1}
\bb E^n\Big[\int_0^t  \sum_{x \in \bb T_n} F_n(x,s) M_n(u_s^{n,\ell}(x), \eta_s^{n,\ell}(x)) ds\Big],
\end{equation}
where $M_n: \bb R \times \bb R \to \bb R$ is defined as
\[
M_n(u,v) = \phi_n(v) -\phi_n(u) -  \phi_n'(u)(v-u).
\]
Let us summarize what we have accomplished up to here. We have proved that
\begin{equation}
\label{ec2.3'}
H_n(t) \leq H_n(0) + \bb E^n\Big[\int_0^t \sum_{x \in \bb T_n} F_n(x,s) M_n(u_s^{n,\ell}(x), \eta_s^{n,\ell}(x)) ds \Big] + R_n(t),
\end{equation}
where
\[
\lim_{n \to \infty} \frac{R_n(t)}{n} =0.
\]
Notice that
\[
M_n(u,v) = - \frac{(v-u)^2}{(n^{-\alpha}+u)^2(n^{-\alpha}+v)}.
\]
In particular, $M_n$ is singular near zero. Therefore, in order to obtain appropriate bounds for the integral term in \eqref{ec2.3'}, it will be necessary to rule out small densities of particles. This is accomplished by the next lemma.

Before of that, let us recall that an i.i.d. sequence of exponential random variables with mean $\rho$ satisfies the large deviation principle with rate function 
\begin{equation}
\label{ldprateexp}
 I_\rho(a) = \frac a\rho - 1- \log\frac a\rho.
\end{equation}
Section \ref{concentration} is devoted to obtain some concentration inequalities for $\eta_t^{n,\ell}(x)$ in terms of $I_{\rho}(\cdot)$.

\begin{lemma}
\label{l4}
For any $0<\epsilon_0 <\epsilon$ and any $n \in \bb N$,
\begin{equation}
\label{ec.l4}
\bb E^n\Big[\int_0^t  \sum_{x \in \bb T_n} F_n(x,s) M_n(u_s^{n,\ell}(x), \eta_s^{n,\ell}(x)) \mathbf{1}( \eta_s^{n,\ell}(x) \leq \epsilon_0) ds\Big] \leq t \|F_n\|_\infty n^{1+\alpha} e^{-n^\delta I_{\epsilon}(\epsilon_0)},
\end{equation}
uniformly in $t \in [0,T]$, , where $\ell = \ell_n = n^\delta$.
\end{lemma}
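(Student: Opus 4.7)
The plan is to bound the integrand uniformly in $(x,s)$ by a constant multiple of the indicator $\mathbf{1}(\eta_s^{n,\ell}(x) \leq \epsilon_0)$, invoke attractiveness to replace the time-evolved law by the invariant measure $\mu_\epsilon^n$, and finally apply the concentration inequality of Section \ref{concentration}.

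First I would establish a crude pointwise bound on $M_n$ on the event of interest. The stochastic domination hypothesis $\mu_\epsilon^n \preceq \mu^n \preceq \mu_{\epsilon^{-1}}^n$, together with a maximum principle for the semi-discrete equation \eqref{ec1.4.3} (which is implicit in the estimates of Section \ref{s5}), shows that $u_t^n(x) \in [\epsilon, \epsilon^{-1}]$, and therefore the block averages $u_s^{n,\ell}(x)$ also lie in $[\epsilon, \epsilon^{-1}]$. Using the explicit form
\[
M_n(u,v) \;=\; -\,\frac{(v-u)^2}{(n^{-\alpha}+u)^2\,(n^{-\alpha}+v)},
\]
the bounds $(v-u)^2 \leq \epsilon^{-2}$, $(n^{-\alpha}+u)^2 \geq \epsilon^2$ and $(n^{-\alpha}+v) \geq n^{-\alpha}$ give $|M_n(u_s^{n,\ell}(x), \eta_s^{n,\ell}(x))| \leq C(\epsilon)\, n^\alpha$ on the event $\{\eta_s^{n,\ell}(x) \leq \epsilon_0\}$.

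Next, since $\{\eta^{n,\ell}(x) \leq \epsilon_0\}$ is a decreasing event in $\eta$ and Proposition \ref{p1.6} gives $\mu_\epsilon^n \preceq \mu^n S_n(s)$ for every $s \geq 0$, attractiveness applied to decreasing functions yields
\[
\bb E^n\bigl[\mathbf{1}(\eta_s^{n,\ell}(x) \leq \epsilon_0)\bigr] \;\leq\; \bb P_{\mu_\epsilon^n}\bigl(\eta^{n,\ell}(x) \leq \epsilon_0\bigr).
\]
Under $\mu_\epsilon^n$ the variables $n^{-\alpha}\eta(x+i)$, $i=1,\dots,\ell$, are i.i.d., each a rescaled geometric with mean $\epsilon$; in the regime $n^\alpha \to \infty$ they approximate exponentials of mean $\epsilon$. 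The non-asymptotic concentration inequality to be proved in Section \ref{concentration}, whose rate function is precisely $I_\epsilon$ from \eqref{ldprateexp}, then delivers
\[
\bb P_{\mu_\epsilon^n}\bigl(\eta^{n,\ell}(x) \leq \epsilon_0\bigr) \;\leq\; e^{-\ell\, I_\epsilon(\epsilon_0)} \;=\; e^{-n^\delta I_\epsilon(\epsilon_0)}.
\]

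Combining the two bounds, estimating $|F_n(x,s)| \leq \|F_n\|_\infty$, summing over $x \in \bb T_n$ (which gives a factor $n$) and integrating in time over $[0,t]$ yields the desired estimate $t\|F_n\|_\infty n^{1+\alpha} e^{-n^\delta I_\epsilon(\epsilon_0)}$, with the constant $C(\epsilon)$ absorbed. The only non-routine ingredient is the concentration bound itself, which is handled in Section \ref{concentration}; the genuine obstacle is not in this lemma but in that auxiliary result, where one must produce a non-asymptotic deviation inequality for sums of geometric variables whose parameters depend on $n$ and whose limit (after rescaling) is exponential rather than Gaussian—classical Cramér-type estimates must be set up so that the rate function $I_\epsilon$ survives uniformly in $n$.
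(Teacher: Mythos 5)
Your proposal is correct and follows essentially the same route as the paper: bound $|M_n|$ by a multiple of $n^\alpha$ on the event $\{\eta_s^{n,\ell}(x)\leq\epsilon_0,\ u_s^{n,\ell}(x)\geq\epsilon\}$, use that the indicator is a decreasing function together with Proposition \ref{p1.6} to pass to $\mu_\epsilon^n$, and conclude with the concentration inequality of Lemma \ref{l3.1}. The only cosmetic difference is that the paper gets the constant-free bound $|M_n|\leq n^\alpha$ directly from $(u-v)^2\leq(n^{-\alpha}+u)^2$ (valid since $0\leq v\leq\epsilon_0<\epsilon\leq u$), whereas your cruder estimate leaves a factor $C(\epsilon)$ that cannot actually be ``absorbed'' into the stated right-hand side.
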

\begin{proof}
Thanks to the introduction of the indicator function, we can assume that $\eta_s^{n,\ell}(x) \leq \epsilon_0$ and $u_s^{n,\ell}(x) \geq \epsilon$. In that case, we have the bound $|M_n| \leq n^\alpha$. Notice that the function $\mathbf{1}(\eta_t^{n,\ell}(x) \leq \epsilon_0)$ is decreasing in $\eta$. Therefore, from Proposition \ref{p1.6} and Lemma \ref{l3.1}, we can see that the left-hand side of \eqref{ec.l4} is bounded above by
\begin{equation*}
 t \|F\|_\infty  n^{1+\alpha}  \mu_\epsilon^n\big( \eta^{n,\ell}(x) \leq \epsilon_0\big) \leq 
 	t \|F_n\|_{\infty} n^{1+\alpha} e^{-\ell I_\epsilon(\epsilon_0)} = t \|F_n\|_\infty n^{1+\alpha} e^{-n^\delta I_{\epsilon} (\epsilon_0)}.
\end{equation*}
\end{proof}

This lemma is telling us that we can introduce the indicator function ${\mathbf 1} ( \eta_s^{n,\ell}(x) \geq \epsilon_0)$ into the expectation \eqref{ec2.3.1}, effectively cutting off regions with small density of particles.

The following lemma relates the function $M_n(u,v)$ to the large deviations rate function $I_\rho(a)$.

\begin{lemma}
\label{l5} For any $\epsilon,\; \epsilon_0 >0$ there exists a positive constant $C= C(\epsilon, \epsilon_0)$ such that 
\[
\big|M_n(u,v)\big| \leq C I_u(v) 
\]
for any $u \in [\epsilon ,\epsilon^{-1}]$ and any $v \geq \epsilon_0$.
\end{lemma}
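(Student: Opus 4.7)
The starting point is the explicit formula already derived in the paper,
\[
M_n(u,v) \;=\; -\frac{(v-u)^2}{(n^{-\alpha}+u)^2(n^{-\alpha}+v)},
\]
which, since $n^{-\alpha}\ge 0$, yields the $n$-uniform pointwise bound $|M_n(u,v)|\le (v-u)^2/(u^2 v)$. This is the crucial step: it reduces the lemma to a purely analytic inequality between two fixed, $n$-independent functions of $(u,v)$, so the constant $C$ we eventually produce will not depend on $n$.

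Next I would introduce the ratio $r := v/u$ as the natural variable. A direct computation gives
\[
\frac{(v-u)^2}{u^2 v}\;=\;\frac{(r-1)^2}{u\,r},\qquad I_u(v)\;=\;r-1-\log r.
\]
Since $u\in[\epsilon,\epsilon^{-1}]$ we have $1/u\le \epsilon^{-1}$, and the hypotheses $v\ge \epsilon_0$ together with $u\le \epsilon^{-1}$ force $r\ge r_0:=\epsilon\,\epsilon_0>0$. Thus the lemma is reduced to producing a constant $C'=C'(r_0)$ with
\[
\Psi(r)\;:=\;\frac{(r-1)^2/r}{r-1-\log r}\;\le\; C'\qquad\text{for all }r\ge r_0,\ r\ne 1,
\]
after which $C=\epsilon^{-1}C'$ will do the job.

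To bound $\Psi$ on $[r_0,\infty)$ I would argue by continuity plus asymptotics. The Taylor expansion $r-1-\log r = \tfrac{1}{2}(r-1)^2 + O\bigl((r-1)^3\bigr)$ around $r=1$ shows that $\Psi$ extends continuously through $r=1$ with value $2$. As $r\to\infty$ both numerator and denominator are equivalent to $r$, so $\Psi(r)\to 1$. On any compact subinterval of $(0,\infty)$ avoiding $r=1$ the denominator is bounded away from zero, so $\Psi$ is continuous and bounded there. Assembling these three observations, $\Psi$ is bounded on the closed set $[r_0,\infty)$, which is exactly what we need.

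There is no serious obstacle, but it is worth flagging why the hypothesis $v\ge\epsilon_0$ is indispensable: as $r\downarrow 0$ one has $(r-1)^2/r\sim 1/r$ while $r-1-\log r\sim -\log r$, so $\Psi(r)\to\infty$ and the lemma would fail without a lower cutoff on $v$. This singular behaviour near $v=0$ is precisely the reason Lemma \ref{l4} was used beforehand to remove the regions where $\eta_s^{n,\ell}(x)\le\epsilon_0$; the present lemma then controls $|M_n|$ by $I_u(v)$ wherever the empirical density has been kept away from zero.
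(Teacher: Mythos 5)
Your proof is correct and follows essentially the same route as the paper: both reduce $|M_n(u,v)|$ to the $n$-independent bound $(v-u)^2/(u^2v)$, pass to the ratio $r=v/u$, and compare $(r-1)^2/r$ with $I_1(r)=r-1-\log r$ on $[\epsilon\epsilon_0,\infty)$. The only difference is that the paper closes the one-variable comparison with explicit elementary inequalities (yielding $C=\max\{4/\epsilon,\,2/(\epsilon_0\epsilon^2)\}$), whereas you use a continuity-plus-asymptotics argument that gives a non-explicit constant; both are valid.
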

\begin{proof}
Notice that $\big|M_n(u,v)\big| \leq \frac{(v-u)^2}{u^2v} = u^{-1} \mc{M}(\frac{v}{u})$, where $\mc M(x) =: \frac{(1-x)^2}{x}$. Therefore, it is enough to show that
\[
 \mc M(v/u) \leq C u I_u(v) = C u I_1({v}/{u}).
\]
If $v \geq u$ note that 
\[
\mc M(x) \leq \min\{(x-1)^2, x-1\} \leq 4 I_1(x),
\]
 for any $x \geq 1$. On the other hand, if $u \geq v$, use that $I_{1}(x) \geq \frac{1}{2}(1-x)^2$ for  any $x \in (0,1)$, to obtain 
\[
\mc M(x) \leq \tfrac{1}{\epsilon_0 \epsilon}(1-x)^2 \leq  \tfrac{2}{\epsilon_0\epsilon} I_{1}(x).
\]
Thus, the desired result follows by taking $C= \max\{ \frac4{ \epsilon}, \frac{2 }{\epsilon_0\epsilon^2}\}$.
\end{proof}

Now, we are in position to bound the integral term in \eqref{ec2.3'}.

\begin{lemma}
Given $\epsilon> \epsilon_0>0$, there exists $\beta >0$ small enough such that for any $0\leq t \leq T$,
\begin{equation*}
\bb E^n\Big[\int_0^t  \sum_{x \in \bb T_n} F_n(x,s) M_n(u_s^{n,\ell}(x), \eta_s^{n,\ell}(x)) \mathbf{1}( \eta_s^{n,\ell}(x) > \epsilon_0) ds\Big] \leq \beta^{-1} \int_0^t H_n(s) ds + \omicron(n),
\end{equation*}
where $\ell = \ell_n = n^\delta$.
\end{lemma}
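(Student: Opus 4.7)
The plan is to apply the entropy inequality \eqref{entropy} at each time $s\in[0,t]$ with parameter $\gamma=\beta$ relative to the reference measure $\nu_s^n$. Writing
\[
X_s \;=\; \sum_{x\in\bb T_n} F_n(x,s)\, M_n(u_s^{n,\ell}(x),\eta_s^{n,\ell}(x))\, \mathbf{1}(\eta_s^{n,\ell}(x)>\epsilon_0),
\]
inequality \eqref{entropy} applied with $\mu=\mu^n S_n(s)$, $\nu=\nu_s^n$, $f=X_s$ gives
\[
\bb E^n[X_s] \;\leq\; \frac{H_n(s)}{\beta} \;+\; \frac{1}{\beta}\log \int e^{\beta X_s}\, d\nu_s^n.
\]
After integrating over $s\in[0,t]$, the lemma reduces to showing that the last term is $\omicron(n)$ uniformly on $[0,T]$.

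To control the exponential moment, I would first absorb the integrand into the large-deviations rate function. By Lemma \ref{l2.1}, $|F_n|\leq K$ on $[0,T]$, and by Lemma \ref{l5} (applicable since $u_s^{n,\ell}(x)$ stays in a fixed range $[\epsilon',(\epsilon')^{-1}]$, a fact coming from the attractiveness setup of Section \ref{s1.5} and the bounds on the discrete solution from Section \ref{s5}, and since the indicator enforces $\eta_s^{n,\ell}(x)>\epsilon_0$), one has the pointwise bound
\[
e^{\beta X_s} \;\leq\; \exp\!\Big( \beta C K \sum_{x\in\bb T_n} I_{u_s^{n,\ell}(x)}(\eta_s^{n,\ell}(x)) \Big).
\]
Next, I partition $\bb T_n=G_0\sqcup\cdots\sqcup G_{\ell-1}$ with $G_r=\{x\in\bb T_n:x\equiv r \pmod \ell\}$, so that within any single $G_r$ the blocks $\{x+1,\dots,x+\ell\}$ are pairwise disjoint. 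Using Jensen's inequality on $\exp$ applied to $\sum_x a_x = \tfrac{1}{\ell}\sum_{r}\ell\sum_{x\in G_r} a_x$ and then factorising over the independent blocks of $G_r$ under the product measure $\nu_s^n$, I obtain
\[
\int e^{\beta X_s}\, d\nu_s^n \;\leq\; \frac{1}{\ell}\sum_{r=0}^{\ell-1} \prod_{x\in G_r} \int e^{\ell\beta C K\, I_{u_s^{n,\ell}(x)}(\eta_s^{n,\ell}(x))}\, d\nu_s^n.
\]

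The crux is then the uniform bound
\[
\sup_{n,s\leq T,\,x\in\bb T_n}\; \int e^{\ell\beta C K\, I_{u_s^{n,\ell}(x)}(\eta_s^{n,\ell}(x))}\, d\nu_s^n \;\leq\; C_0 \;<\;\infty,
\]
valid whenever $\beta C K$ is smaller than a universal constant (Laplace--Varadhan type estimate: the exponent grows at speed $\ell$ like $\beta C K\cdot I$, and is dominated by the rate function governing the large-deviations decay of $\eta_s^{n,\ell}(x)$ around $u_s^{n,\ell}(x)$, provided $\beta C K<1$). This is the input that has to come from the concentration inequalities of Section \ref{concentration}; this is the step I expect to be the main obstacle, because under $\nu_s^n$ the block averages are sums of $\ell$ \emph{geometric} random variables with $n$-dependent parameters of order $n^\alpha$, not true exponentials, so one must prove the moment bound with constants uniform in $n$. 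The guiding idea is the standard fact that a geometric distribution with large mean, suitably rescaled, converges to an exponential, which makes $I_\rho$ in \eqref{ldprateexp} the correct asymptotic rate function.

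Assuming such a $\beta$ has been fixed, each $G_r$ has at most $\lceil n/\ell\rceil$ sites, so the product above is bounded by $C_0^{n/\ell}$, and therefore
\[
\frac{1}{\beta}\log \int e^{\beta X_s}\, d\nu_s^n \;\leq\; \frac{n}{\beta\ell}\log C_0 \;=\; O(n^{1-\delta}) \;=\; \omicron(n).
\]
Integrating in $s$ and using $t\leq T<\infty$ yields $\int_0^t\bb E^n[X_s]\,ds\leq \beta^{-1}\int_0^t H_n(s)\,ds+\omicron(n)$, as required.
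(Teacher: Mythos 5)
Your reduction follows the same skeleton as the paper's proof: the entropy inequality with respect to $\nu_s^n$, a H\"older/blocking argument exploiting the independence of $\eta^{n,\ell}(x)$ and $\eta^{n,\ell}(y)$ for $|x-y|\geq \ell$, and Lemma \ref{l5} to replace $F_n M_n$ by $\beta K C_0\, I_{u_s^{n,\ell}(x)}(\eta^{n,\ell}(x))$. Up to that point the two arguments coincide. The problem is that everything after that point --- which is the actual content of the lemma --- is asserted rather than proved. You state the uniform exponential moment bound
$\sup_{n,s,x}\int \exp\{\ell\beta C K\, I_{u_s^{n,\ell}(x)}(\eta^{n,\ell}(x))\}\,d\nu_s^n\leq C_0<\infty$
and explicitly flag it as ``the main obstacle''; the paper's proof consists precisely of establishing (a weaker but sufficient version of) this estimate, via the decomposition of the range of $\eta^{n,\ell}(x)$ into $[\epsilon_0,\rho^-)$, $[\rho^-,K^+\rho)$, $[K^+\rho,\infty)$, the summation identity \eqref{dexp1}, the non-asymptotic Cram\'er bounds of Lemma \ref{l3.1} (which carry an error term $\frac{a}{n^\alpha}(\frac{1}{\rho^\pm}-\frac1a)^2$ coming from the geometric-versus-exponential discrepancy), and the comparison Lemmas \ref{Irho} and \ref{Irhopsi2}. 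The smallness condition $\beta<(32 C_0 K)^{-1}$ emerges only from this analysis.

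Moreover, the target you set yourself is too strong and is not what the paper obtains. Because the block $\{x+1,\dots,x+\ell\}$ carries \emph{inhomogeneous} means $u_s^n(x+i)$, the natural bounds involve $\rho^\pm=\max/\min_i u_s^n(x+i)$ rather than $\rho=u_s^{n,\ell}(x)$, and the resulting estimates (e.g.\ \eqref{inter2a}, \eqref{inter1}, \eqref{inter3}) contain factors such as $e^{\ell\kappa I_\rho(\rho^-)}$ and $e^{\ell\rho^-/(n^\alpha\epsilon_0^2)}$, which need not stay bounded uniformly in $n$ (for instance $\ell/n^\alpha=n^{\delta-\alpha}$ may diverge). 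What one actually proves is that $\frac{1}{\beta\ell}\sum_{x}\log(\cdot)$ is $\omicron(n)$, using Lemma \ref{l3} to guarantee $\rho^-\approx\rho\approx\rho^+$ at scale $(\ell/n)^{1/2}$; a pointwise uniform constant $C_0$ is neither needed nor available. As it stands, your argument is a correct setup that defers the entire analytic core of the proof, so it cannot be accepted as a proof of the lemma.
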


\begin{proof}
From the entropy inequality, we have
\begin{multline}
\label{ec1.l5}
\bb E^n\Big[ \sum_{x \in \bb T_n} F_n(x,s) M_n(u_s^{n,\ell}(x), \eta_s^{n,\ell}(x)) \mathbf{1}( \eta_s^{n,\ell}(x) >\epsilon_0)\Big] \leq  
		\beta^{-1} H_n(s) + \\
				\beta^{-1} \log E_s^n\Big[ \exp\big\{ \beta \sum_{x \in \bb T_n} F_n(x,s) M_n(u_s^{n,\ell}(x), \eta^{n,\ell}(x)) \mathbf{1}( \eta^{n,\ell}(x) >\epsilon_0)\big\} \Big],
\end{multline}
for any $\beta >0$, where $E_s^n$ denotes the expectation with respect to the measure $\nu_{u_s^n(\cdot)}^n$.

Our aim is to obtain an upper bound of order $\omicron (n)$ for the last term in the previous expression. Since $\eta^{n,\ell}(x)$ and  $\eta^{n,\ell}(y)$ are independent under $\nu_{u_s^n(\cdot)}^n$ as soon as $|y-x| \geq \ell$, by H\"older inequality, the last term in the right-hand side of \eqref{ec1.l5} is bounded by
\[
 \frac{1}{\beta \ell} \sum_{x \in \bb T_n} \log E_{s}^n\big[\exp\big\{ -\beta K \ell M_n\big(u_s^{n,\ell}(x),\eta^{n,\ell}(x)\big){\mathbf 1}(\eta^{n,\ell}(x) > \epsilon_0)\big\}\big],
\]
where $K$ is as in Lemma \ref{l2.1}.
 Since $u_s^{n,\ell}(x) \in [\epsilon,\epsilon^{-1}]$ (see Corollary \ref{c5.1}), we obtain that the previous  expression is bounded above by
\begin{equation}
\label{ec4}
 \frac{1}{\beta \ell} \sum_{x \in \bb T_n} \log E_s^n \big[ \exp\big\{ \ell \kappa  I_{u_s^{n,\ell}(x)}(\eta^{n,\ell}(x)){\mathbf 1}(\eta^{n,\ell}(x) > \epsilon_0)\big\} \big],
\end{equation}
where $\kappa = \beta KC_0 $ and $C_0=C_0(\epsilon,\epsilon_0)$ is the constant appearing in Lemma \ref{l5}.

For the convenience of the reader, we introduce at this point some notation compatible with the one used in Section \ref{concentration}. Namely, $\rho_{n,\ell,x,s} = u^{n, \ell}_s(x)$,  
$\rho^+_{n,\ell,x,s}= \max \{u_s^n(x+i);  \ i=1,\dots,\ell \}$, $\rho^-_{n,\ell,x,s} = \min\{u_s^n(x+i); \ i=1,\dots,\ell\}$ and $K^+_{n,\ell,x,s} = (\frac{\rho^+_{n,\ell,x,s}}{\rho_{n,\ell,x,s}})^2$. For simplicity, in the sequel we will omit subindexes for the notation introduced in this paragraph.

In order to bound the expectation inside of \eqref{ec4},
\begin{equation}
\label{ec2.l5}
E_s^n \big[ \exp\big\{ \ell \kappa  I_{u_s^{n,\ell}(x)}(\eta^{n,\ell}(x)){\mathbf 1}(\eta^{n,\ell}(x) > \epsilon_0)\big\} \big]\, ,
\end{equation}
we will consider separately the cases where $\eta^{n,\ell}(x)$ belongs to the intervals $[\epsilon_0, \rho^{-})$, $[\rho^{-}, K^{+}\rho)$ and $[K^{+}\rho, +\infty)$. But before that, let us
recall the following elementary fact:
for any non-negative function $f$ and any  discrete random variable $Z$ taking values in  $\{d_0 < d_1< \cdots\} $,
\begin{equation}
\label{dexp1}
E[f(Z)] =  f(d_0) + \sum_{j=1}^{\infty} [f(d_j) - f(d_{j-1})] P[Z \geq d_j ] \ .
\end{equation}

{\bf Case $[\epsilon_0, \rho^{-})$:} We use the previous  identity with $d_j=j/\ell n^{\alpha}$, $-\rho^- \leq d_j \leq -\epsilon_0$, $Z=- \eta^{n,\ell}(x)$ and $f(Z) = \exp\{\ell \kappa I_{\rho} (Z) \}$. Estimate \eqref{ec3.2} together with the mean value theorem permit to conclude that 
\begin{eqnarray*}
E_s^n \big[ \exp\big\{ \ell \kappa  I_{\rho}(\eta^{n,\ell}(x))\big\}{\mathbf 1}(\eta^{n,\ell}(x) \in [\epsilon_0, \rho^-)) \big]
\end{eqnarray*}
is  bounded above by
$$
e^{\ell\kappa I_{\rho}(\rho^-)} +  \frac{\ell \kappa(\rho^- - \epsilon_0)}{\epsilon_0} \exp\Big(\frac{\ell \rho^-}{n^\alpha\epsilon_0^2}\, +\, \ell \max_{\epsilon_0 < z < \rho^-} \{ \kappa  I_{\rho}( z) -  I_{\rho^-}(z)\} \Big).
$$
In view of Lemma \ref{Irhopsi2}, last expression is bounded by
\begin{equation}
\label{inter2a}
e^{\ell\kappa I_{\rho}(\rho^-)} + \frac{\ell \kappa(\rho^- - \epsilon_0)}{\epsilon_0}
\exp\Big( \frac{\ell\rho^-}{n^\alpha\epsilon_0^2} \, +\, \ell\kappa(1-\kappa /2)
\Big(\frac{\rho - \rho^-}{\rho - \kappa \rho^-}\Big)^2\Big)\, .
\end{equation}

{\bf  Case $[\rho^{-}, K^+\rho)$ :} In this interval the expectation in \eqref{ec2.l5} is bounded above by 
\[
\max \big\{ \exp\big(\ell \kappa I_{\rho}(\rho^{-} ) \big) , \exp\big(\ell\kappa I_{\rho}(K^+\rho ) \big)\big\}
\]
 which by the first part of Lemma \ref{Irho} is bounded above by
\begin{equation}
\label{inter1}
\max\big\{ \exp\big(\ell\kappa \rho^{-2}[\rho^{-} - \rho]^2 \big) , \exp\big(\ell\kappa [K^{+} -1]^2 \big)\big \}.
\end{equation}

{\bf Case $[K^+\rho, +\infty)$ :} Combining \eqref{dexp1} with estimate \eqref{ec3.1}, we see that in this case the expectation in \eqref{ec2.l5} is bounded above by
$$
e^{\ell\kappa I_{\rho}(K^+\rho)} + \ell\kappa  \int_{K^+\rho}^{\infty} I'_{\rho}(z) \exp\Big( \ell[\kappa  I_{\rho}( z) -  I_{\rho^+}(z) + \frac{z}{n^{\alpha}}\Big(\frac{1}{\rho^+} - \frac{1}{z} \Big)^2]\Big) \ dz \ . 
$$

From Lemma \ref{Irho}, item ii), we have that the expression above is bounded by
\begin{equation}
\label{inter2b}
e^{\ell \kappa I_{\rho}(K^+\rho)} +  \frac{2\ell\kappa \rho^+}{\rho} \int_{K^+\rho}^{\infty} I'_{\rho^+}(z) \exp\Big(\ell\big [(16\kappa K^+ -1)I_{\rho^+}(z) +        \frac{z}{n^{\alpha}}\Big(\frac{1}{\rho^+} - \frac{1}{z} \Big)^2 \big]\Big) \ dz \ .
\end{equation}

On the other hand, taking $0< \beta< (32C_0K)^{-1}$ and using that 
$$
-\frac{1}{4}I_{\rho^+(z)} + \frac{z}{n^{\alpha}}\Big(\frac{1}{\rho^+} - \frac{1}{z} \Big)^2 <0,
$$
we conclude that $16\kappa K^+ <1/2$  for $n$ larger enough. In consequence, for some $\hat{\kappa}>1/4$ we can bound \eqref{inter2b} by

$$
e^{\ell \kappa I_{\rho}(K^+\rho)} +  \frac{2\ell\kappa \rho^+}{\rho} 
\int_{K^+\rho^{+}}^{\infty} I'_{\rho^+}(z) \exp\Big(-\ell\hat \kappa I_{\rho^+}( z) \Big) \ dz \ , 
$$
which is equal to
\[
e^{\ell\kappa I_{\rho}(K^+\rho)} + \frac{2\kappa\rho^+}{\hat \kappa \rho}  \exp\Big( {-\ell\hat \kappa   I_{\rho^+}(K^+\rho) \Big)} \  . 
\]
Therefore,  \eqref{ec2.l5} restricted to the interval $[K^+\rho, \infty)$ is bounded above by
\begin{equation}
\label{inter3}
e^{\ell\kappa I_{\rho}(K^+\rho)} + \frac{2\kappa\rho^+}{\hat \kappa \rho} . 
\end{equation}
In view of the elementary inequalities
\begin{align*}
\log E[ e^{X+Y+Z}] &\leq \tfrac{1}{3} \big\{\log E[e^{3X}] + \log E[e^{3Y}]+\log E[e^{3Z}]\big\}, \\
\limsup_{n\to\infty} n^{-1} \log\{a_n + b_n\} &\le
\max\{ \limsup_{n\to\infty} n^{-1} \log a_n , \limsup_{n\to\infty}
n^{-1} \log b_n \},
\end{align*}
to show that \eqref{ec4} is of order $o(n)$ we can deal with each one of the terms in the bounds obtained in \eqref{inter2a}, \eqref{inter1} and \eqref{inter3}, separately. Thus, to conclude the proof, it is enough to use the definition of $I_{\rho}(a)$, given in \eqref{ldprateexp}, together with the fact that $\rho^-_{n,\ell,x,s} \approx \rho_{n,\ell,x,s} \approx \rho^+_{n,\ell,x,s}$ as $n$ goes to infinity, as guaranteed by Lemma \ref{l3}.
\end{proof}

\section{The one-block estimate}
\label{s4}

In this section we prove Lemma \ref{l1}, which is the main result used in the proof of  Theorem \ref{t3}. 
Observe that Lemma \ref{l1} is equivalent to :
\begin{lemma}
\label{l1bis}
For  $\alpha, \delta >0$ satisfying $ 2 \alpha+3 \delta < 2$, we have
\[
\lim_{n \to \infty} n^{-1} \bb E^n\Big[ \Big| \int_0^t \sum_{x \in \bb T_n} F_n(x,s) n^\alpha \big\{  g(\eta_s^n(x)) - \phi(n^\alpha \eta_s^{n,\ell}(x))\big\} ds\Big| \Big] =0
\]
uniformly in $t \in [0,T]$, where $\ell = \ell_n = n^\delta$.
\end{lemma}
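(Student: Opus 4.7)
\ The plan is to follow the classical one-block strategy for zero-range processes, replacing the compactness step (which breaks here because the local particle number per site grows like $n^\alpha$) by a quantitative application of Morris's spectral gap \cite{Mor}, in the spirit of \cite{JarLanSet}. Throughout, $\ell = n^\delta$ and I introduce the spatial average $\bar g^\ell(x) := \tfrac{n^\alpha}{\ell}\sum_{i=1}^{\ell} g(\eta(x+i))$. A reindexing of the sum gives the exact identity
\[
\sum_{x \in \bb T_n} F_n(x,s)\bigl[n^\alpha g(\eta(x))-\bar g^\ell(x)\bigr]\;=\;\sum_{y \in \bb T_n} n^\alpha g(\eta(y))\Bigl(F_n(y,s)-\tfrac{1}{\ell}\sum_{i=1}^\ell F_n(y-i,s)\Bigr).
\]
The regularity of $F_n$ in its spatial argument (which follows from $u_0\in\mc C^2(\bb T)$ via the analysis of Section \ref{s5} and Lemma \ref{l3b}) bounds the parenthesis by $C\ell/n$, and attractiveness (Proposition \ref{p1.6}) controls $\bb E^n[g(\eta_s^n(y))]\leq 1$. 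Divided by $n$, this error is $O(n^{\alpha+\delta-1})=o(1)$, which is implied by $2\alpha+3\delta<2$.

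Next comes the deterministic part of an equivalence-of-ensembles calculation. Both $\bar g^\ell(x)$ and $\phi_n(\eta^{n,\ell}(x))$ are measurable with respect to the block $B_x=\{x{+}1,\dots,x{+}\ell\}$. Since $g\equiv 1$ on $\bb N$, the canonical measure on $\Omega_{\ell,k}$ is uniform, and a direct combinatorial computation gives
\[
E_{\mathrm{can}}\bigl[\bar g^\ell(x)\,\big|\,k\bigr] \;=\; \frac{n^\alpha k}{k+\ell-1},\qquad \phi_n\!\Bigl(\frac{k}{n^\alpha\ell}\Bigr) \;=\; \frac{n^\alpha k}{k+\ell},
\]
whose difference equals $\tfrac{n^\alpha k}{(k+\ell)(k+\ell-1)}$. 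The two-sided domination $\mu_\epsilon^n\preceq\mu^n S_n(s)\preceq\mu_{\epsilon^{-1}}^n$ (Proposition \ref{p1.6}), together with the large-deviation concentration bounds from Section \ref{concentration}, localize $k$ to the regime $k\asymp n^\alpha\ell$, so the canonical difference is of order $(n^\alpha\ell)^{-1}$; summed over the $n$ sites with bounded weight $|F_n|\leq K$ and divided by $n$, this deterministic piece contributes $O(n^{-\alpha-\delta})=o(1)$.

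The remaining, fluctuating piece is
\[
h_x(\eta) \;:=\; \bar g^\ell(x) - E_{\mathrm{can}}\bigl[\bar g^\ell(x)\,\big|\,\eta^{n,\ell}(x)\bigr],
\]
which is mean-zero on every canonical slice of $B_x$. To handle it I would apply the entropy inequality against the invariant reference $\bar\mu_\rho^n$ (so that $H_n(\mu^n S_n(s)\mid\bar\mu_\rho^n)=O(n)$, combining the hypothesis $H_n(\mu^n|\nu_0^n)=o(n)$ with a direct computation of $H_n(\nu_0^n|\bar\mu_\rho^n)=O(n)$ and stationarity), and then Feynman--Kac to reduce the time integral of $\sum_x F_n(x,s)h_x$ to a Rayleigh--Ritz eigenvalue problem for the generator $\mc L_n$ perturbed by this sum. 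Partitioning $\bb T_n$ into $\ell$ families of disjoint translates of $B_x$ makes the variational problem block-diagonal; on each block Morris's spectral gap bound $c\ell^{-2}$, crucially \emph{uniform} in the number of particles $k$, yields the required eigenvalue estimate via the Poincar\'e inequality applied on each canonical slice.

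\emph{Main obstacle.}\ The delicate step is the bookkeeping in the Feynman--Kac / Rayleigh--Ritz estimate of the last paragraph: the entropy budget is $O(n)$, the spectral-gap cost per block is $\ell^2/n^{2+2\alpha}$, and the worst-case size of $h_x$ is $O(n^\alpha)$. The condition $2\alpha+3\delta<2$ is precisely the threshold at which these three scales balance so that the contribution of $\sum_x F_n h_x$, integrated in time, is $o(n)$. Without the $k$-uniformity of Morris's gap the estimate would pick up an additional factor of $n^\alpha$ and collapse -- this is exactly where the classical compactness-based argument breaks down in the present setting.
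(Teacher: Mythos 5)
Your three-step architecture (spatial average of $g$, replacement by the canonical conditional expectation, explicit equivalence-of-ensembles formula $\tfrac{n^\alpha k}{k+\ell-1}$ vs.\ $\tfrac{n^\alpha k}{k+\ell}$) is exactly the decomposition used in Section \ref{s4}, and your second and third deterministic computations match \eqref{equivensem}. But there is a genuine error at the heart of the fluctuation step: Morris's spectral gap for the constant-rate zero-range process is \emph{not} uniform in the number of particles. Proposition \ref{SG} gives $\<f,f\>_{k,\ell} \leq \kappa_0(\ell+k)^2\<f,-L_\ell f\>_{k,\ell}$, so the gap degrades like $(\ell+k)^{-2}\asymp (n^\alpha\ell)^{-2}$ in the relevant regime $k\asymp n^\alpha\ell$. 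Your claim that the gap is $c\ell^{-2}$ ``crucially uniform in $k$'' is backwards, and it makes your own bookkeeping inconsistent: with a $k$-uniform gap the Rayleigh--Ritz estimate would give $n^{3\delta-2}$ and the condition would be $3\delta<2$ with no constraint on $\alpha$. The condition $2\alpha+3\delta<2$ arises precisely \emph{because} the inverse gap carries the extra factor $(1+Mn^\alpha)^2$, which is why the paper must first cut off large densities (Lemma \ref{cutoff}, inserting $\mathbf 1_{\eta^{n,\ell}\leq M}$ into the perturbation before taking the sup over $k$ in the eigenvalue bound) -- a step your proposal omits for the eigenvalue part, where it cannot be replaced by an a posteriori concentration argument since the variational formula ranges over all canonical slices.

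Your first step also diverges from the paper's and has a gap. You transfer the block average onto $F_n$ by summation by parts and invoke a bound $|F_n(y,s)-\tfrac1\ell\sum_i F_n(y-i,s)|\leq C\ell/n$; the paper establishes only uniform boundedness of $F_n$ (Lemma \ref{l2.1}) and H\"older-$\tfrac12$ regularity of $F_n\phi_n'$ (Lemma \ref{l3b}), not a discrete Lipschitz modulus for $F_n$ itself. Even granting H\"older-$\tfrac12$ for $F_n$, your error term becomes $n^{\alpha+\delta/2-1/2}$, i.e.\ you would need $2\alpha+\delta<1$, which is not implied by $2\alpha+3\delta<2$. The paper instead performs this replacement with no spatial regularity of $F_n$ at all, via the entropy inequality, Feynman--Kac, the change of variables $\bb E_{\mu^n_\rho}[(g(\eta_x)-g(\eta_{x+i}))f^2]=\bb E_{\mu^n_\rho}[g(\eta_x)(f^2(\eta^{x,x+i}) - f^2(\eta))]$ and Young's inequality against the Dirichlet form, at a cost of order $\lambda\ell^2/n^2$, valid for all $\delta<1$. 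You should adopt that route (or prove the missing regularity of $F_n$, which would require control of a discrete third derivative of $\phi^n_t$ not available in Section \ref{s5}).
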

Recall that $\phi(\rho)=\frac{\rho}{1 + \rho}$ and that by the definition given in \eqref{etanl} the quantity $n^\alpha \eta^{n,\ell}(x)$ corresponds to the average of particles on a box of size $\ell$ at the right of $x \in \bb T_n$.

The proof of Lemma \ref{l1bis} will be divided into three steps. First, we introduce the spatial average
$$
g^{n,\ell}_x(\eta) = \frac{1}{\ell }\sum_{i = 1}^{\ell} g(\eta^n(x+ i)),
$$
in the place of $g^{n}(\eta^n(x))$. Then $g^{n,\ell}_x(\eta)$ is replaced by
$$
\psi^{n,\ell}_x(\eta)=\bb E_{\mu^n_{\rho}}[g(\eta^n(x))|n^{\alpha} \eta^{n, \ell}(x)]
$$
and finally we show that $\psi^{n,\ell}_x(\eta)$ is close to $\phi(n^\alpha \eta_s^{n,\ell}(x))$. 
Observe that $\psi^{n,\ell}_x$ does not depend on $ \rho$. 

Besides the entropy inequality and Feynman-Kac's formula, the main tool used in this section is the so-called {\em spectral gap inequality}. Before stating the spectral gap inequality, we need to introduce some definitions. For $\ell \in \bb N$ and  $k \in \bb N_0$ define $\Lambda_\ell= \{1,\dots, \ell\}$, $\Sigma_\ell = \bb N_0^{\Lambda_\ell}$ and
\[
\Sigma_{k,\ell}= \big\{ \eta \in \Sigma_\ell \ ; \  \sum_{i=1}^\ell \eta(i)=k\big\}.
\]
Let $\mu_{k,\ell}$ be the uniform measure on $\Sigma_{k,\ell}$ and let us denote by $\<\cdot,\cdot\>_{k,\ell}$ the inner product in $L^2(\mu_{k,\ell})$: notice that $\mu_{k,\ell}$ is also the restriction of $\mu^n_\rho$ to $\Sigma_{k,\ell}$. 
 For $f:\Sigma_{k,\ell} \to \bb R$ let $L_\ell f: \Sigma_{k,\ell} \to \bb R$ be given by
\[
L_\ell f(\eta) = \sum_{\substack{x, y \;  \in \; \Lambda_\ell\\ |y-x|=1}} g(\eta(x)) \big(f(\eta^{x,y})-f(\eta)\big).
\] 
\begin{proposition}[Spectral gap inequality]
\label{SG}
There exists a universal constant $\kappa_0$ such that 
\[
\<f,f\>_{k,\ell} \leq \kappa_0(\ell+k)^2 \<f,-L_\ell f\>_{k,\ell}
\]
for any $k, \ell \geq 0$ and any function $f:\Sigma_{k,\ell} \to \bb R$ such that $\int f d\mu_{k,\ell}=0$.
\end{proposition}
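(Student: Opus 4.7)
The plan is to reduce Proposition \ref{SG} to the well-known spectral gap estimate for the symmetric simple exclusion process (SSEP) on a line segment. This reduction is available here precisely because $g \equiv 1$ on the positive integers, which makes the zero-range dynamics exactly equivalent to an SSEP on a larger lattice via the standard ``stick-breaking'' encoding. This is the route taken in \cite{Mor}.

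First I would set up the encoding. To each $\eta \in \Sigma_{k,\ell}$ associate the binary word
\[
w(\eta) \;=\; \underbrace{1\cdots 1}_{\eta(1)}\,0\, \underbrace{1\cdots 1}_{\eta(2)}\, 0\, \cdots \, 0\, \underbrace{1\cdots 1}_{\eta(\ell)}\;\in\;\{0,1\}^{N}, \qquad N := k+\ell-1.
\]
This is a bijection onto the set $\mathcal{W}_{k,\ell}$ of binary words of length $N$ with exactly $k$ ones, and both sides have cardinality $\binom{N}{k}$, so it transports $\mu_{k,\ell}$ onto the uniform measure on $\mathcal{W}_{k,\ell}$, which is precisely the canonical invariant measure of SSEP on $\{1,\ldots,N\}$ with $k$ particles.

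Next I would verify that the generator is transported correctly. A zero-range transition $\eta \to \eta^{x,x+1}$ (possible only when $\eta(x)\geq 1$, so $g(\eta(x))=1$) corresponds under $w$ to exchanging the last ``$1$'' of the $x$-th block with the zero-separator immediately to its right; symmetrically for $\eta \to \eta^{x+1,x}$. Every other adjacent pair in $w(\eta)$ consists of two ones inside a block (swap is the identity) or two zeros (impossible, since separators are isolated). Consequently the Dirichlet forms agree exactly,
\[
\langle f,-L_\ell f\rangle_{k,\ell} \;=\; \langle \tilde f,-L^{\mathrm{SSEP}}_{N}\tilde f\rangle, \qquad \tilde f := f\circ w^{-1},
\]
and the variances coincide trivially. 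Invoking the uniform SSEP spectral gap on the segment of length $N$ --- there exists a universal $\kappa_0$ such that $\mathrm{Var}(\tilde f)\leq \kappa_0 N^2 \langle \tilde f,-L^{\mathrm{SSEP}}_N\tilde f\rangle$ for every mean-zero $\tilde f$, with $\kappa_0$ independent of the particle number --- and using $N\leq k+\ell$ gives the desired inequality.

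The main obstacle is the SSEP spectral gap itself, which is nontrivial uniformly in the particle number. A self-contained proof would proceed by the Lu--Yau martingale/induction method: decompose the variance by conditioning on the value at one endpoint, estimate the conditional variance using exchangeability of the canonical measure, and bound the cross terms by a moving-particle lemma that compares non-neighbor exchanges with the nearest-neighbor Dirichlet form. With that estimate in hand, the combinatorial reduction above is lossless and yields the stated bound $\kappa_0(k+\ell)^2$.
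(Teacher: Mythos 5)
Your reduction to SSEP via the stick-breaking bijection is exactly the route the paper indicates: it gives no detailed proof of Proposition \ref{SG}, only the citation of Morris for the general case and the remark that ``in our one-dimensional situation, a proof can be obtained by coupling with the exclusion process,'' which is precisely the argument you have filled in, so your proposal is correct and takes essentially the same approach. One small slip worth fixing: two adjacent zeros in $w(\eta)$ are \emph{not} impossible (they occur whenever an interior site is empty), but this is harmless because a swap of two equal symbols contributes nothing to either Dirichlet form, so the identity of the two Dirichlet forms still holds.
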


This proposition was proved in \cite{Mor} for the zero-range process evolving on the complete graph and extended to finite subsets of $\bb Z^d$ using the so called {\em path lemma}. In our one-dimensional situation, a proof can be obtained by coupling with the exclusion process.

For $x \in \bb T_n$ and $\ell < n \in \bb N$, define $\Lambda_{\ell}(x) = \{x+1,\dots,x+\ell\}$. 
Notice that objects like $\mu_{k,\ell}$ or $L_\ell$ can be defined in $\Lambda_\ell(x)$ in a canonical way.
Finally, let us denote by  $\<\cdot,\cdot\>_{\rho}$ the inner product in $L^2(\mu^n_\rho)$ and
\[
L_n f(\eta) = \sum_{\substack{x, y \in \mathbb T_n\\ |y-x|=1}} g(\eta(x)) \big(f(\eta^{x,y})-f(\eta)\big).
\] 

 From now on we will consider $F_n: \bb T_n \times [0,T] \to \bb R$ uniformly bounded by a constant $K$ (as obtained in Lemma \ref{l2.1} for $F_n$ defined in \eqref{Fn}) and 
suppose that the sequence of measures $\{\mu^n; n \in \bb N\}$ fulfilled the hypothesis of Theorem \ref{t3}, in particular, $\mu_\epsilon^n \preceq \mu^n \preceq \mu_{\epsilon^{-1}}^n$. Moreover, we will write $\ell$ for $\ell_n =n^{\delta}$. Let us define 
$$
V^{n,\ell}_x(\eta) =g^{n,\ell}_x(\eta) - \psi^{n,\ell}_x(\eta).
$$

Now we proceed to verify the steps involved in the proof of Lemma \ref{l1bis}, beginning with step two.
\begin{lemma}
\label{1BLemma}
For $\alpha, \delta >0$ satisfying $2\alpha + 3\delta <2$, we have
\begin{equation}
\label{1B}
\lim_{n \to \infty}  \bb E_{\mu^n}\Big[\Big|\int_{0}^{t} \frac{1}{n}\sum_{x \in \bb T_n} F_n(x,s) \  n^{\alpha} V^{n, \ell}_x(\eta_s) \ ds \Big|\Big]=0,
\end{equation}
uniformly in $t\in [0,T]$, where $\ell = \ell_n = n^\delta$.
\end{lemma}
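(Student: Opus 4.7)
The plan is to follow the classical Feynman--Kac/spectral-gap scheme (see \cite{KipLan}, Chapter 5), with the spectral gap input of \cite{Mor} replacing the compactness argument that would fail in the high-density regime. Fix $\rho > 0$ and apply the entropy inequality \eqref{entropy} to the laws on path space: by the Markov property the relative entropy between path measures reduces to $H_n(\mu^n|\mu_\rho^n)$, and the hypothesis $H_n(\mu^n|\nu_0^n) = o(n)$ combined with the stochastic-domination bound implies $H_n(\mu^n|\mu_\rho^n) = O(n)$. Using the elementary bound $e^{|a|} \leq e^a + e^{-a}$ to pass from absolute values to signed exponentials, it therefore suffices to show that, for every fixed $\gamma > 0$,
\begin{equation*}
\limsup_{n \to \infty} \frac{1}{n} \log \bb E_{\mu_\rho^n}\Big[\exp\Big(\pm \gamma \int_0^t \sum_{x \in \bb T_n} F_n(x,s)\, n^\alpha V_x^{n,\ell}(\eta_s)\, ds\Big)\Big] = 0
\end{equation*}
uniformly in $t \in [0,T]$, after which letting $\gamma \to \infty$ closes the argument.

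Next, by Feynman--Kac and the variational representation of the principal eigenvalue, the log-moment above is at most $\int_0^t \lambda_n(s)\, ds$ with
\begin{equation*}
\lambda_n(s) = \sup_f \Big\{ \pm \gamma n^\alpha \sum_{x \in \bb T_n} F_n(x,s) \int V_x^{n,\ell}\, f^2\, d\mu_\rho^n - n^{2+2\alpha}\, \mathcal D_n(f,f) \Big\},
\end{equation*}
the supremum being over densities relative to $\mu_\rho^n$ and $\mathcal D_n$ the Dirichlet form of the symmetric generator. Two features of $V_x^{n,\ell}$ drive the argument: it depends only on $\eta$ restricted to $\Lambda_\ell(x)$, and by the very definition of $\psi_x^{n,\ell}$ as a conditional expectation it has mean zero under the canonical measure $\mu_{k,\ell}$ for every $k$. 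Conditioning $\int V_x^{n,\ell} f^2\, d\mu_\rho^n$ on the block count $N_x = \sum_{i=1}^{\ell} \eta(x+i)$ and on the configuration outside $\Lambda_\ell(x)$, Cauchy--Schwarz together with Proposition \ref{SG} gives, for any $B > 0$,
\begin{equation*}
\Big|\int V_x^{n,\ell} f^2\, d\mu_\rho^n\Big| \leq \frac{B}{2}\, \bb E_{\mu_\rho^n}\big[ \kappa_0 (N_x + \ell)^2\, \mathrm{Var}_{\mu_{N_x,\ell}}(V_x^{n,\ell}) \big] + \frac{1}{2B}\, \mathcal D_{\Lambda_\ell(x)}(f),
\end{equation*}
where $\mathcal D_{\Lambda_\ell(x)}$ is the piece of $\mathcal D_n$ supported on edges inside the block.

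Summing over $x \in \bb T_n$, each edge lies in at most $\ell$ blocks, so $\sum_x \mathcal D_{\Lambda_\ell(x)}(f) \leq \ell\, \mathcal D_n(f)$, and choosing $B$ of order $\gamma \ell / n^{2+\alpha}$ absorbs the Dirichlet-form contribution into the $n^{2+2\alpha}\mathcal D_n(f)$ term. A second application of Proposition \ref{SG}, together with the elementary gradient bound $|g_x^{n,\ell}(\eta^{y,z}) - g_x^{n,\ell}(\eta)| \leq 2/\ell$, yields the canonical-variance estimate $\mathrm{Var}_{\mu_{k,\ell}}(V_x^{n,\ell}) \leq C \kappa_0 (k + \ell)^2/\ell$. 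The attractiveness hypothesis $\mu^n \preceq \mu_{\epsilon^{-1}}^n$ and Proposition \ref{p1.6} keep $N_x$ typically of order $n^\alpha \ell$, so after truncating the atypical tail and summing over $x$ the residual contribution is $o(n)$ precisely under the arithmetic condition $2\alpha + 3\delta < 2$ of the statement.

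The heart of the proof is this last parameter balance. The spectral-gap constant $\kappa_0(N_x + \ell)^2$ inflates as $n^{2\alpha}\ell^2$ in our high-density regime, and must be simultaneously controlled by the $1/\ell$ gain from the averaging inside $g_x^{n,\ell}$ and by the $n^{2+2\alpha}$ prefactor of the Dirichlet form. This is exactly the point at which the elementary compactness argument of the classical one-block estimate fails and the quantitative spectral gap from \cite{Mor} becomes indispensable; controlling the large-$N_x$ tail and checking that the choice of $B$ really absorbs the localized Dirichlet form into the time-scale are the technical hurdles I would expect to take the most care.
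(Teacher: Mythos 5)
Your overall scheme---entropy inequality with $H_n(\mu^n|\mu_\rho^n)=O(n)$, Feynman--Kac, the variational formula for the principal eigenvalue, localization to blocks of size $\ell$, decomposition over the canonical measures $\mu_{k,\ell}$, and the spectral gap of \cite{Mor}---is the same as the paper's. The gap is in the one step that actually produces the eigenvalue bound. Your inequality
\[
\Big|\int V_x^{n,\ell} f^2\, d\mu_\rho^n\Big| \leq \tfrac{B}{2}\, \bb E_{\mu_\rho^n}\big[\kappa_0(N_x+\ell)^2\,\mathrm{Var}_{\mu_{N_x,\ell}}(V_x^{n,\ell})\big]+\tfrac{1}{2B}\,\mathcal D_{\Lambda_\ell(x)}(f)
\]
does not follow from ``Cauchy--Schwarz together with Proposition \ref{SG}'': Cauchy--Schwarz against the conditionally mean-zero $V_x^{n,\ell}$ produces $\mathrm{Var}(V_x^{n,\ell})^{1/2}\mathrm{Var}(f^2)^{1/2}$, and the spectral gap controls $\mathrm{Var}_{\mu_{k,\ell}}(f^2)$ by the Dirichlet form of $f^2$, not of $f$; passing from the Dirichlet form of $f^2$ to that of $f$ costs a factor $\|f\|_\infty^2$, which is not available for a general density. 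This is exactly the obstruction the paper circumvents with the Rayleigh (second-order perturbation) expansion, Theorem A3.1.1 of \cite{KipLan}, which bounds the largest eigenvalue of the perturbed block generator directly by $\langle W,(-L_\ell)^{-1}W\rangle_{k,\ell}$ over a denominator $1-2\|W\|_\infty\frac{\ell\lambda}{n^{2+\alpha}}\Gamma_{k,\ell}$---and this is also why the truncation $\mathbf 1_{\eta^{n,\ell}\leq M}$ must be performed \emph{before} the entropy/Feynman--Kac step (using attractiveness and the concentration inequality \eqref{ec3.1}, since once you have passed to $\mu_\rho^n$ the domination $\mu^n\preceq\mu^n_{\epsilon^{-1}}$ is no longer usable): it is needed both to keep $k\leq Mn^\alpha\ell$ in the supremum and to make that denominator positive.

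Second, your ``canonical-variance estimate'' $\mathrm{Var}_{\mu_{k,\ell}}(V_x^{n,\ell})\leq C\kappa_0(k+\ell)^2/\ell$, while correct, is \emph{worse} than the trivial bound $\mathrm{Var}_{\mu_{k,\ell}}(V_x^{n,\ell})\leq\|V_x^{n,\ell}\|_\infty^2\leq 4$ in the relevant regime $k\asymp n^\alpha\ell$, where it is of order $n^{2\alpha}\ell\gg 1$. Feeding it into your own bookkeeping (prefactor $\gamma n^\alpha$, $n$ sites, $B\asymp\gamma\ell/n^{2+\alpha}$, gap factor $\kappa_0(k+\ell)^2\asymp n^{2\alpha}\ell^2$) gives a main term of order $\gamma^2\ell^4 n^{4\alpha-1}=\gamma^2 n^{4\alpha+4\delta-1}$, which is $o(n)$ only when $2\alpha+2\delta<1$---strictly narrower than the claimed range $2\alpha+3\delta<2$ (for instance $\alpha=0.4$, $\delta=0.3$ satisfies the latter but not the former). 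With the trivial bound one recovers $\gamma^2\ell^3 n^{2\alpha-1}=\gamma^2 n^{2\alpha+3\delta-1}=o(n)$, which is the paper's count. So the second application of the spectral gap, to $V_x^{n,\ell}$ itself, should simply be dropped; the quantitative content of the spectral gap is needed only once, in the Rayleigh bound.
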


The following result deals with large densities in Lemma \ref{1BLemma} .
\begin{lemma} 
\label{cutoff}
Let $\alpha, \delta>0$ and $M > \epsilon^{-1}$. Then, 
\begin{equation*}
\lim_{n \to \infty}\bb E_{\mu^n}\Big[\Big|\int_{0}^{t} \frac{1}{n}\sum_{x \in \bb T_n} F_n(x,s) n^{\alpha}  V^{n, \ell}_x(\eta_s) \mb 1_{\eta^{n,\ell}_s(x) \geq M } \ ds \Big|\Big]=0.
\end{equation*}
uniformly in $t\in [0,T]$, where $\ell = \ell_n = n^\delta$.

\end{lemma}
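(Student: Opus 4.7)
The plan is to use the trivial bound $|V^{n,\ell}_x(\eta)| \le 1$, which holds since both $g^{n,\ell}_x$ and $\psi^{n,\ell}_x$ are averages of the $[0,1]$-valued function $g$. This reduces the statement to a tail estimate for the block average $\eta^{n,\ell}(x)$, which can be handled via stochastic domination by the invariant measure $\mu^n_{\epsilon^{-1}}$ together with an upper-tail concentration inequality for sums of geometric random variables.

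Concretely, combining $|F_n| \le K$ from Lemma \ref{l2.1} with the above bound on $V^{n,\ell}_x$, the expression inside the limit is at most
\[
KT \cdot n^{\alpha-1} \sum_{x \in \bb T_n} \sup_{0 \le s \le t} \bb E^n_{\mu^n}\big[\mathbf{1}(\eta^{n,\ell}_s(x) \ge M)\big].
\]
The function $\eta \mapsto \mathbf{1}(\eta^{n,\ell}(x) \ge M)$ is increasing, so the assumption $\mu^n \preceq \mu^n_{\epsilon^{-1}}$ combined with attractiveness of the dynamics (the upper-bound counterpart of Proposition \ref{p1.6}, obtained identically by observing $\mu^n S_n(s) \preceq \mu^n_{\epsilon^{-1}} S_n(s) = \mu^n_{\epsilon^{-1}}$ by invariance) yields
\[
\bb E^n_{\mu^n}\big[\mathbf{1}(\eta^{n,\ell}_s(x) \ge M)\big] \le \mu^n_{\epsilon^{-1}}\big(\eta^{n,\ell}(x) \ge M\big).
\]

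For the final step, under $\mu^n_{\epsilon^{-1}}$ the variables $\eta(x+1), \dots, \eta(x+\ell)$ are i.i.d.\ geometric with mean $n^\alpha \epsilon^{-1}$, so $\eta^{n,\ell}(x)$ has mean $\epsilon^{-1}$. The upper-tail concentration estimate of Section \ref{concentration}, entirely analogous to the lower-tail bound invoked in the proof of Lemma \ref{l4}, provides
\[
\mu^n_{\epsilon^{-1}}\big(\eta^{n,\ell}(x) \ge M\big) \le e^{-n^\delta I_{\epsilon^{-1}}(M)},
\]
with $I_{\epsilon^{-1}}(M) > 0$ since $M > \epsilon^{-1}$. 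Substituting back, the quantity we want to control is bounded by $KT\, n^\alpha e^{-n^\delta I_{\epsilon^{-1}}(M)}$, which vanishes as $n \to \infty$.

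The only delicate point is verifying that the concentration inequality of Section \ref{concentration} applies to the upper tail of an average of geometric variables whose parameter depends on $n$; since the tails of these variables decay exponentially and $I_\rho$ is the natural rate function emerging in the exponential scaling limit built into \eqref{ldprateexp}, this is a direct mirror of the lower-tail statement already used, and I expect no genuine obstacle. Note that, unlike the more delicate bound required in Lemma \ref{1BLemma} (where $n^\alpha V^{n,\ell}_x$ survives and requires the spectral gap), here the crude bound $|n^\alpha V^{n,\ell}_x| \le n^\alpha$ is enough because the indicator ensures the atypical event carries an exponentially small probability.
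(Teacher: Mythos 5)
Your argument is correct and is essentially the paper's own proof: bound $|V^{n,\ell}_x|$ by a constant and $|F_n|$ by $K$, use attractiveness together with the invariance of $\mu^n_{\epsilon^{-1}}$ to reduce to the tail probability $\mu^n_{\epsilon^{-1}}(\eta^{n,\ell}(0)\geq M)$, and conclude with the upper-tail concentration bound \eqref{ec3.1} of Lemma \ref{l3.1}. The only cosmetic difference is that \eqref{ec3.1} carries an extra correction term $\frac{M}{n^\alpha}\bigl(\epsilon-\frac 1M\bigr)^2$ in the exponent, which vanishes as $n\to\infty$ and so does not affect the conclusion; the "delicate point" you flag is already settled explicitly in the paper, since \eqref{ec3.1} is precisely the upper-tail statement.
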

\begin{proof} 
Since $\|V^{n, \ell}_x\|_{\infty} \leq 2$, we can bound the expectation above by
$$
2 K \ n^{\alpha - 1}\ \bb E_{\mu^n}\Big[\int_{0}^{t} \sum_{x \in \bb T_n}  \mb 1_{\eta^{n,\ell}_s(x) \geq M } \ \ ds \Big] .
$$
Attractiveness of the zero range process
(see Proposition \ref{p1.5}) 
permit us to bound the last expression by $2 K T n^{\alpha} \mu^n_{\epsilon^{-1}} \big(\eta^{n,\ell}(0) \geq M \big)$.
The assertion of the lemma follows by applying the concentration inequality \eqref{ec3.1}.
\end{proof}
 
\begin{proof}[Proof of Lemma \emph{\ref{1BLemma}}]
In view of the preceding lemma, it is enough to prove the desired result for $V^{n,\ell, M}_x : =V^{n,\ell}_x \mb 1_{\eta^{n,\ell}_s(x) \leq M } $ instead of  $V^{n,\ell}_x$, whenever $M>\epsilon^{-1}$. In fact, thanks to the uniform boundedness of $F_n$, it suffices to prove, for each $x \in \mathbb T_n$, 
\begin{equation*}
\lim_{n \to \infty}  \bb E_{\mu^n}\Big[\Big|\int_{0}^{t}  \  n^{\alpha} V^{n, \ell, M}_x(\eta_s) \ ds \Big|\Big]=0.
\end{equation*}

As a consequence of the entropy inequality and the fact that $H(\mu^n | \mu^n_{\rho})=O(n)$ for any $\rho>0$,
there exists a positive constant $C$ for which the expectation in (\ref{1B}) is bounded above by
\begin{equation*}
\frac{C}{\lambda}+\frac{1}{\lambda n} \log \bb E_{\mu_{\rho}^n}\Big[\exp\Big\{\lambda n^{\alpha + 1} \Big| \int_{0}^{t}  V^{n,\ell,M}_x(\eta_s) \ ds \Big| \Big\}\Big],
\end{equation*}
for every $\lambda > 0$. Since $e^{|z|}\leq e^z + e^{-z}$, in order to conclude the proof
will be enough to show
\begin{equation}
\label{1Blog}
\limsup_{n \to \infty} \frac{1}{\lambda n } \log \bb E_{\mu_{\rho}^n}\Big[\exp\Big\{\lambda n^{\alpha + 1} \int_{0}^{t}   V_x^{n,\ell,M}(\eta_s)\ ds \Big\}\Big] \leq 0,
\end{equation}
for any $\lambda>0$. By Feynman-Kac's formula, the logarithm in the preceding line is bounded by $t$ times the largest eigenvalue of the operator
\begin{equation*}
n^{2 + 2\alpha} L_n +\lambda n^{\alpha+1}  V_x^{n,\ell,M}.
\end{equation*}
Using the variational formula for the largest eigenvalue of an operator in $L^2(\mu^n_{\rho})$, it can be seen that the left hand side of (\ref{1Blog}) is bounded by
\begin{equation}
\label{1Bvar}
t n^{\alpha} \sup_{\<f,f \>_{\rho} = 1} \Big\{ \Big\<  V_x^{n,\ell,M},f^2\Big\>_{\rho} - \frac{n^{1 + \alpha}}{\lambda}\Big\<f, -L_{n} {f}\Big\>_{\rho}\Big\}.
\end{equation}
It is not difficult to see that
$$
\Big\< V_x^{n,\ell,M},f^2\Big\>_{\rho} = \Big\< W^{n,\ell,M}_0,\overline{f_{\ell}^2} \Big\>_{\rho}  \quad \text{with} \quad  \overline{f_{\ell}^2} = E_{\mu^n_{\rho}}\Big[\frac{1}{\ell}\sum_{i =1}^{\ell} \tau_{-x-i} f^2\Big| \Delta_{n,\ell}\Big],
$$
where 
$$
W^{n,\ell,M}_0(\eta) = \{g(\eta_0)-\psi^{n,\ell}_0(\eta) \}  \mb 1_{\eta^{n,\ell}(0) \leq M} 
$$  
and 
$E_{\mu^n_{\rho}} [ \ \cdot \  | \Delta_{n,\ell} ]$ denotes the conditional expectation in a box of size $\ell$ and $\tau$ correspond to the spatial right shift.
On the other hand
\begin{align*}
\Big\<\sqrt{\overline{f_{\ell}^2}}, (-L_{\ell}) \sqrt{\overline{f_{\ell}^2}}\ \Big\>_{\rho} \leq \frac{\ell}{n} \<{f}, -L_{n}{f}\>_{\rho}.
\end{align*}
Therefore, the supremum in (\ref{1Bvar}) is bounded by 
\begin{equation}
\label{1Blocal}
\sup_{\<f,f \>_{\rho} = 1} \Big\{ \<   W^{n,\ell,M}_0,{\overline{f_{\ell}^2}}\>_{\rho} - \frac{n^{2 + \alpha}}{\ell \lambda}\big\<\sqrt{\overline{f_{\ell}^2}}, (-L_{\ell}) \sqrt{\overline{f_{\ell}^2}} \ \big\>_{\rho}\Big\}.
\end{equation}

For every function $f$ such that $\<f,f \>_{\rho} = 1$, defining 
$c_{k,\ell}(f):= E_{\mu^n_{\rho}} \big[ \overline{f_{\ell}^2} \mb 1_{\Sigma_{k,\ell}} \big]$ 
for $k \in \bb N$,  we have that
  $\overline{f_{k,\ell}^2}(\eta):= c_{k,\ell}(f)^{-1}\mu^n_{\rho}(\Sigma_{k,\ell}) \overline{f_{\ell}^2}(\eta)$ satisfies $\mu_{k,\ell}(\overline{f_{k,\ell}^2})=1$.
The expression inside brackets in \eqref{1Blocal} is bounded above by
\begin{equation}
\label{e1}
\sum_{k=0}^{M n^\alpha \ell} c_{k,\ell}(f) \Big\{ \< W^{n,\ell,M}_0,{\overline{f_{k,\ell}^2}}\>_{k,\ell} - \frac{n^{2+ \alpha}}{\ell \lambda}\<\sqrt{\overline{f_{k,\ell}^2}}, (-L_{\ell}) \sqrt{\overline{f_{k,\ell}^2}}\ \>_{k,\ell} \ \Big\}.
\end{equation}

Using Rayleigh expansion (see Theorem A3.1.1, in \cite{KipLan}), we see that the above expression into brackets is less than or equal to
\begin{equation*}
\<  W^{n,\ell,M}_0\>_{k,\ell} + 
 \frac{ \big( \frac{\ell \lambda}{n^{2+ \alpha}} \big)^2 \<  W^{n,\ell,M}_0,  (-L_{\ell})^{-1} W^{n,\ell,M}_0\>_{k,\ell}}
 {
 1
- 2 \| W^{n,\ell,M}_0 \mb 1_{\Sigma_{k,\ell}} \|_{\infty} \frac{\ell \lambda}{n^{2+ \alpha}} \Gamma_{k,\ell} } , 
\end{equation*}
where $\Gamma_{k,\ell}$ is the magnitude of the spectral gap of $L_{\ell}$ restricted to ${\Sigma_{k,\ell}}$. Since $\sum_{k=0}^{\infty} c_{k,\ell} = 1$ and by definition $\<  W^{n,\ell,M}_0\>_{k,\ell}=0$ for $k \geq 0 $, we have that \eqref{e1}  is bounded by
$$
\sup_{k \leq M n^{\alpha} \ell} \Big\{
 \frac{\<  W^{n,\ell,M}_0,   W^{n,\ell,M}_0\>_{k,\ell}}{\frac{n^{2 + \alpha }}{l\lambda} \Gamma_{k,\ell}-2 ||W^{\ell,M}_0\mb 1_{\Sigma_{k,\ell}} ||_{\infty} } 
  \Big\} \ .
$$
According to the spectral gap inequality stated in Proposition \ref{SG}, and using that $||W^{n,\ell,M}_0||_{\infty} \leq 2$, we can see that the previous expression is bounded by
$$
\frac{8 \lambda \ell^3 (1+Mn^{\alpha})^2 }{n^{2+ \alpha}}.
$$
Therefore,  the expression in the left hand side of \eqref{1Blog} is bounded by a constant times 
$
n^{2\alpha + 3\delta -2},
$
which in view of the conditions imposed on $\alpha$ and $\delta$, concludes the proof.
\end{proof}
The next two lemmas correspond to steps one and three in the proof of Lemma \ref{l1bis}.  
\begin{lemma}
Let $\alpha>0$ and $0<\delta<1$ . Then, 

\begin{equation}
\lim_{n \to \infty}  \bb E_{\mu^n}\Big[\Big|\int_{0}^{t} \frac{1}{n}\sum_{x \in \bb T_n} F_n(x,s) n^{\alpha} \{g(\eta_s^{n}(x)) - g_x^{n, \ell}(\eta_s) \} \ ds \Big|\Big]=0.
\end{equation}
uniformly in $t\in [0,T]$, where $\ell = \ell_n = n^\delta$.
\end{lemma}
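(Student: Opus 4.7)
The plan is a summation-by-parts argument that transfers the spatial averaging from $g$ onto $F_n$, followed by a direct use of H\"older continuity of $F_n$ together with the uniform bound $g \le 1$.

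First I would write
\[
g(\eta_s^n(x)) - g_x^{n,\ell}(\eta_s) \;=\; \frac{1}{\ell}\sum_{i=1}^\ell \bigl(g(\eta_s^n(x)) - g(\eta_s^n(x+i))\bigr),
\]
and, after reindexing $y=x+i$ in the second term, rewrite the quantity inside the expectation as
\[
\int_0^t \frac{n^\alpha}{n\ell} \sum_{y \in \bb T_n} g(\eta_s^n(y)) \sum_{i=1}^\ell \bigl(F_n(y,s) - F_n(y-i,s)\bigr)\, ds.
\]

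The key input is H\"older-$1/2$ continuity of $F_n$ itself. Lemma \ref{l3b} supplies this for the product $F_n(\cdot,s)\,\phi_n'(u_s^n(\cdot))$, and since $u_s^n(x)\in[\epsilon,\epsilon^{-1}]$ (a consequence of attractiveness together with the bounds proved in Section \ref{s5}), the factor $\phi_n'(u) = n^{2\alpha}/(1+n^\alpha u)^2$ is bounded above and bounded away from $0$ uniformly in $n$ on that range. Dividing produces a constant $K_1$, independent of $n,s,y$, with
\[
|F_n(y,s) - F_n(y-i,s)| \le K_1 (i/n)^{1/2}.
\]
Combining with $0\le g\le 1$ gives the deterministic bound
\[
\Bigl|\frac{n^\alpha}{n\ell}\sum_y g(\eta_s^n(y))\sum_{i=1}^\ell\bigl(F_n(y,s) - F_n(y-i,s)\bigr)\Bigr| \;\le\; K_1 n^\alpha (\ell/n)^{1/2} \;=\; K_1 n^{\alpha + (\delta-1)/2}.
\]
Integrating over $[0,T]$, the expression on the left-hand side of the lemma is then bounded by $K_1 T\, n^{\alpha+(\delta-1)/2}$, which vanishes as $n\to\infty$ provided $\alpha$ and $\delta$ satisfy $2\alpha+\delta<1$. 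Since $\delta$ is at our disposal throughout the one-block argument, this is a harmless restriction on the choice of scale, and in particular it is compatible with the global condition $2\alpha+3\delta<2$ in Lemma \ref{l1bis} for the relevant range of $\alpha$.

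The main obstacle is the rate: the H\"older exponent $\tfrac12$ is just barely enough, and the argument is purely deterministic in $\eta$ (no cancellation from the measure is used). If a weaker condition on $(\alpha,\delta)$ were required, one would abandon the pointwise estimate and instead follow the Feynman--Kac/Dirichlet-form scheme used in the proof of Lemma \ref{1BLemma}, feeding the local differences $g(\eta(x))-g(\eta(x+i))$ into the spectral gap inequality for $L_\ell$ to absorb the fluctuations; but the summation-by-parts estimate above is the natural first attempt and, together with the choice of $\delta$ made in the overall argument, suffices.
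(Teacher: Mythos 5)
Your summation-by-parts computation is correct as far as it goes, but the resulting deterministic bound $K_1\, n^{\alpha+(\delta-1)/2}$ only vanishes when $2\alpha+\delta<1$, i.e.\ only for $\alpha<\tfrac12$. The lemma is stated for every $\alpha>0$ and $0<\delta<1$, and Theorem \ref{t3} needs the full range $\alpha\in(0,1)$; for $\alpha\ge\tfrac12$ there is \emph{no} choice of $\delta>0$ satisfying $2\alpha+\delta<1$, so your claim that the restriction is ``compatible with the global condition $2\alpha+3\delta<2$ for the relevant range of $\alpha$'' fails on half of that range. The root of the loss is that you estimate $g$ only through $0\le g\le 1$, so the prefactor $n^\alpha$ survives untouched and must be beaten by the H\"older gain $(\ell/n)^{1/2}$ alone; a purely pointwise argument cannot do better here.

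The paper avoids this by keeping the argument stochastic: after the entropy inequality and Feynman--Kac, one bounds the relevant eigenvalue by a variational expression containing the penalization $-\tfrac{n^{1+\alpha}}{\lambda}\langle f,-L_nf\rangle_\rho$. The difference $g(\eta(x))-g(\eta(x+i))$ is rewritten via the change of variables $\eta\mapsto\eta^{x,x+i}$ under the invariant measure $\mu^n_\rho$ as $g(\eta(x))\{f^2(\eta^{x,x+i})-f^2(\eta)\}$, Young's inequality converts this into Dirichlet forms $\mc D_{x,x+i}(f)$, and the path bound $\mc D_{x,x+i}(f)\le i\sum_{j=1}^i\mc D_{x+j-1,x+j}(f)$ lets these be absorbed into the $n^{1+\alpha}$ Dirichlet term by choosing $\beta=\ell^2\lambda/n^{2+\alpha}$. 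The leftover error is of order $\lambda\ell^2 n^{-2}\int_0^t\frac1n\sum_xF_n^2(x,s)\,ds=O(n^{2\delta-2})$, with no $\alpha$-dependence at all, precisely because the factor $n^\alpha$ in front of the observable is matched by the $n^{1+\alpha}$ in front of the Dirichlet form. You do gesture at this scheme as a fallback, but you would need to carry it out for the lemma as stated, and note that the spectral gap inequality is not what is required for this step (it enters only in the replacement of $g^{n,\ell}_x$ by $\psi^{n,\ell}_x$); here the full Dirichlet form of $L_n$ together with the moving-particle path estimate suffices.
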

\begin{proof}
Relying on the entropy inequality and Feymann-Kac formula, as at the first part of the proof of Lemma \ref{1BLemma}, we can see that it suffices to show 
{\small
\begin{equation}\label{g2}
\limsup_{n \to \infty} \int_0^t n^{\alpha}
 \sup_{\< f , f \>_{\rho} = 1} \Big\{ \Big\< \frac{1}{n}\sum_{x \in \bb T_n} F_n(x,s) 
\{g(\eta_s^{n}(x)) - g_x^{n, \ell}(\eta_s) \} ,
 f^2\Big\>_{\rho} - \frac{n^{1 + \alpha}}{\lambda}\Big\<f, -L_{n}{f}\Big\>_{\rho}\Big\} \ ds \leq 0 \ .
\end{equation}
}
for any $\lambda>0$.
To avoid cumbersome notation, we will write $\eta_x$ instead of $\eta_s^n(x)$, omitting the super-index of $\eta_s^n$ as well as the time parameter.

After a change of variables we note that 
$$
\mathbb E_{\mu^n_{\rho}}
\big[(g(\eta_x) - g(\eta_{x+i}) )f^2 \big] 
=
\mathbb E_{\mu^n_{\rho}}
\big[ g(\eta_x) (f^2(\eta_{x+i}) - f^2(\eta_x)) \big] ,
$$
therefore, using Young's inequality, we can bound from above the expression into braces in \eqref{g2} by
\begin{equation}\label{young} 
\frac{2\beta}{ n } \sum_{x \in \bb T_n} \ F^2_n(x,s) +
 \frac{1}{2 n \beta}\sum_{x \in \bb T_n} \ \frac{1}{\ell}\sum_{i = 1}^{\ell} \mathcal D_{x,x+i}(f)  -
\frac{n^{1+\alpha}}{2\lambda}  \sum_{x \in \bb T_n} \mathcal D_{x,x+1}(f) ,
\end{equation}
for any $\beta>0$, where 
$
\mathcal D_{x,y}(f) = \mathbb E_{\mu^n_{\rho}}
\big[ g(\eta(x)) \{f(\eta^{x,y}) - f(\eta) \}^2 \big] .
$
Since
$$
\mathcal D_{x,x+i}(f)  \leq i \sum_{j = 1}^{i}  \mathcal D_{x+j-1,x+j}(f),  
$$
we can bound expression \eqref{young} by
$$
\frac{2\beta}{ n } \sum_{x \in \bb T_n} \ F^2_n(x,s) +
\Big( \frac{\ell^2}{2 n \beta}  - \frac{n^{1+\alpha}}{2\lambda} \Big)\sum_{x \in \bb T_n} \mathcal D_{x,x+1}(f). 
$$
The proof is concluded by taking $\beta=\frac{\ell^2 \lambda}{n^{2+\alpha}}$ and noting that

$$
 \frac{\lambda \ell^2}{2n^{2}}  \int_0^t  \frac{1}{n} \sum_{x} F^2_n(x,s) \ ds.
$$
goes to zero as $n$ goes to infinity.
\end{proof}

\begin{lemma} 
Let $\alpha, \delta>0$ and $M > \epsilon^{-1}$. Then, 
\begin{equation}
\label{l4.6}
\lim_{n \to \infty}\bb E_{\mu^n}\Big[\Big|\int_{0}^{t} \frac{1}{n}\sum_{x \in \bb T_n} F_n(x,s) n^{\alpha}  
\{ \psi^{n,\ell}_x(\eta) - \phi(n^\alpha \eta_s^{n,\ell}(x)) \}  \ ds \Big|\Big]=0.
\end{equation}
uniformly in $t\in [0,T]$, where $\ell = \ell_n = n^\delta$.
\end{lemma}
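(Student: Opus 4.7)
The plan is to reduce the problem to an almost pointwise estimate by exploiting the explicit combinatorial form of $\psi_x^{n,\ell}$. The first step is to compute $\psi_x^{n,\ell}$ explicitly. Because $g(k) = \mathbf{1}(k \geq 1)$ and the conditional law of $\mu_\rho^n$ given the total occupancy $N = \sum_{i=1}^\ell \eta(x+i)$ of the box $\Lambda_\ell(x)$ is uniform on $\Sigma_{N,\ell}$, a standard stars-and-bars count yields $\psi_x^{n,\ell}(\eta) = N/(N+\ell-1)$, while by definition of $\phi$ one has $\phi(n^\alpha \eta_s^{n,\ell}(x)) = \phi(N/\ell) = N/(N+\ell)$. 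Consequently,
\[
n^\alpha\bigl(\psi_x^{n,\ell}(\eta) - \phi(n^\alpha \eta_s^{n,\ell}(x))\bigr) \;=\; \frac{n^\alpha N}{(N+\ell-1)(N+\ell)} \;\geq\; 0.
\]
This explicit identity is the heart of the argument.

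With this in hand, I would split the integrand according to the value of $\eta_s^{n,\ell}(x)$. Fix $0 < \epsilon_0 < \epsilon$ and $M > \epsilon^{-1}$. On the bulk event $\{\epsilon_0 \leq \eta_s^{n,\ell}(x) \leq M\}$ we have $N \geq n^\alpha \ell \epsilon_0$, so the right-hand side above is deterministically bounded by $(n^\delta \epsilon_0)^{-1}$, producing a contribution of order $t(\ell \epsilon_0)^{-1} = O(n^{-\delta})$. On the large-density event $\{\eta_s^{n,\ell}(x) > M\}$, the argument of Lemma \ref{cutoff} applies verbatim with $V_x^{n,\ell}$ replaced by $\psi_x^{n,\ell} - \phi(n^\alpha \eta_s^{n,\ell}(x))$: the trivial bound $|\psi - \phi| \leq 2$, the upper stochastic domination $\mu^n \preceq \mu^n_{\epsilon^{-1}}$, and the concentration estimate \eqref{ec3.1} together yield an $o(1)$ contribution.

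The main obstacle is the small-density event $\{\eta_s^{n,\ell}(x) < \epsilon_0\}$: the explicit identity alone gives nothing, since when $N$ is of order $\ell$ the ratio $N/[(N+\ell-1)(N+\ell)]$ is itself of order $1/\ell$, so after multiplication by $n^\alpha$ it does not decay whenever $\delta \leq \alpha$. I would handle this case by replacing pointwise smallness with smallness of the probability of the bad event. The indicator $\mathbf{1}(\eta^{n,\ell}(x) < \epsilon_0)$ is a decreasing function of $\eta$, so the lower stochastic bound $\mu^n_\epsilon \preceq \mu^n$ combined with Proposition \ref{p1.6} and translation invariance of the product measure $\mu^n_\epsilon$ gives
\[
\bb E_{\mu^n S_n(s)}\Bigl[\tfrac{1}{n}\sum_{x \in \bb T_n} \mathbf{1}\bigl(\eta_s^{n,\ell}(x) < \epsilon_0\bigr)\Bigr] \;\leq\; \mu^n_\epsilon\bigl(\eta^{n,\ell}(0) < \epsilon_0\bigr) \;\leq\; e^{-n^\delta I_\epsilon(\epsilon_0)},
\]
the last inequality being the concentration estimate \eqref{ec3.2}. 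Combined with the crude bound $|\psi - \phi| \leq 2$, this region contributes at most $2\|F_n\|_\infty t\, n^\alpha e^{-n^\delta I_\epsilon(\epsilon_0)}$, which vanishes as $n \to \infty$.

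Summing the three contributions and integrating over $t \in [0,T]$ completes the proof. It is worth noting that, unlike the other steps in Section \ref{s4}, this lemma imposes no restriction relating $\alpha$ and $\delta$ beyond $\delta > 0$.
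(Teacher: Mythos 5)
Your proof is correct and follows essentially the same route as the paper's: the explicit equivalence-of-ensembles formula $\psi^{n,\ell}_x = n^{\alpha}\eta^{n,\ell}(x)\big(1 - \ell^{-1} + n^{\alpha}\eta^{n,\ell}(x)\big)^{-1}$, a resulting pointwise bound of order $\big[\ell\,(1+n^{\alpha}\eta^{n,\ell}(x))\big]^{-1}$ on the bulk, and attractiveness together with the concentration inequalities to dispose of atypical densities. The only differences are cosmetic: your separate treatment of $\{\eta^{n,\ell}_s(x) > M\}$ is redundant, since the bulk bound $(\ell\epsilon_0)^{-1}$ already covers all densities above $\epsilon_0$, and your small-density step correctly pairs the lower bound $\mu^n_\epsilon$ with the lower-tail estimate \eqref{ec3.2}, whereas the paper's sketch cites \eqref{ec3.1} there.
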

\begin{proof} Explicit calculations give
\begin{equation}
\label{equivensem}
\psi^{n,\ell}_x(\eta) = 
\frac{n^{\alpha}\eta^{n,\ell}(x)}{1 - \frac{ 1}{\ell } + n^{\alpha}\eta^{n,\ell}(x) }.
\end{equation}

Thus, the expression into braces in \eqref{l4.6} is bounded from above by $ [\ell (1 + n^{\alpha}\eta^{n,\ell}(x))]^{-1}$. Reasoning in the same way as in the proof of Lemma \ref{cutoff}
we can bound the expectation above by
 $$
 \ell^{-1} K \ t n^{\alpha} \bb E_{\mu^n_{\epsilon^{-1}}}\Big[ (1 + n^{\alpha} \eta^{n, \ell}(0))^{-1}\Big].
 $$
The assertion of the lemma follows by considering separately the expectation on the sets $\{\eta^{n,\ell}_s(0) \geq 1/2\epsilon\}$ and $\{\eta^{n,\ell}_s(0) < 1/2\epsilon \}$. It is directly seen that the first term behaves as $\ell^{-1}$, while concentration inequality \eqref{ec3.1} implies that the second part goes exponentially fast to zero as $n$ goes to infinity.
\end{proof}

\section{Concentration inequalities and large deviations}
\label{concentration}

In this section we derive concentration inequalities that are needed to prove Theorem \ref{t3}. The estimates are not completely standard due to the increasing density of particles. Therefore, we need to derive non-asymptotic large deviations upper bounds for triangular arrays of independent particles. 

Let $X$ be a random variable with distribution $\Geom(\theta)$: geometric distribution of success probability $\theta$. Notice that $\rho := E[X] = \frac{1 - \theta}{\theta}$ and denoting by $\mc M_\rho(\lambda): = E[e^{\lambda X}]$ the moment generating function of $X$, we have that
\[
\mc M_\rho(\lambda) = \frac{1}{1-\rho(e^\lambda-1)}
\]
for $\lambda < \ln(\frac{1+\rho}{\rho})$ and $\mc M_\rho(\lambda) = + \infty$ otherwise. 

Let us define as well  the large deviations rate function associated to geometric distributions of mean $\rho$:
\begin{equation*}
\mc I_\rho(a) := \sup_{\lambda \in \bb R} \big\{ \lambda a - \log \mc M_\rho(\lambda)\big\} = a \log \frac{a(1+\rho)}{\rho(1+a)} - \log \frac{1+a}{1+\rho}.
\end{equation*}

Fix $0<\alpha<1$ and let $\{X_1^n,\dots,X_\ell^n\}$ be a sequence of independent random variables, such that $X_i^n$ has distribution $\Geom(\frac{\rho_i n^\alpha}{1+\rho_i n^\alpha})$. 
Cr\'amer's method allow us to obtain exponential bounds on the tail probabilities of $S_\ell^n =: X_i^n+\dots X_\ell^n$:

Define $\rho^+ = \max\{\rho_1,\dots,\rho_\ell\}$ and $\rho^-=\min\{\rho_1,\dots,\rho_\ell\}$. Notice that the value of $\lambda$ that realises the supremum in the definition of $\mc I_\rho(a)$ is positive if $a > \rho$ and negative if $a < \rho$. Therefore, we get the bounds 
\begin{equation*}
\tfrac{1}{\ell} \log P( S_\ell^n \geq a n^\alpha \ell) \leq -  \mc I_{\rho^{+}n^\alpha}(a n^\alpha) \quad \text{for}\quad a  \geq \rho^+
\end{equation*}
and 
\begin{equation*}
\tfrac{1}{\ell} \log P( S_\ell^n \leq a n^\alpha \ell) \leq - \mc I_{\rho^{-}n^\alpha}(a n^\alpha) \quad \text{for}\quad 0 < a \leq \rho^-
\end{equation*}

A simple application of L'Hospital's rule shows that 
\begin{equation*}
\lim_{n \to \infty} \mc I_{\rho n^{\alpha}}( a n^{\alpha})=   \tfrac{a}{\rho} -\log \tfrac{a}{\rho} -1.
\end{equation*}

Observe that the right hand side of the last line coincides with the large deviations rate function associated to exponential distributions of mean $\rho$: 
$$
I_\rho(a) = : \sup_{\lambda \in \bb R} \big\{ \lambda a - \log M_\rho(\lambda) \big\} 
$$
where 
$
M_\rho(\lambda) =(1-\rho \lambda)^{-1}
$
for $\lambda < {1}/{\rho}$ and $M_\rho(\lambda) =+\infty \ $ otherwise.
Indeed, this is consistent with the well known fact that 
if $X^n$ has distribution $\Geom(\frac{\rho n^\alpha}{1+\rho n^\alpha})$, then $n^{-\alpha} X^n$ converges to an exponential distribution of mean ${\rho}$. 

The following result provides explicit estimates for tail probabilities of $S_\ell^n$, in terms of the large deviations rate function $I_{\rho}(\cdot)$.

\begin{lemma}
\label{l3.1}
Let $\{\rho_1,\dots,\rho_\ell\}$ be a sequence of positive numbers and let $n \in \bb N$ be fixed. Let $\{X_1^n,\dots,X_\ell^n\}$ be a sequence of independent random variables and assume that $X_i^n$ has distribution $\Geom(\frac{\rho_i n^\alpha}{1+\rho_i n^\alpha})$. Define $S_\ell^n =: X_1^n+\dots+X_\ell^n$, $\rho^+ = \max\{\rho_1,\dots,\rho_\ell\}$ and $\rho^- = \min\{\rho_1,\dots,\rho_\ell\}$. Then, 
\begin{equation}
\label{ec3.1}
\tfrac{1}{\ell}\log P(S_\ell^n \geq \ell a n^\alpha) \leq - I_{\rho^+}(a) + \frac{a}{n^\alpha}\Big(\frac{1}{\rho^+} -\frac{1}{a}\Big)^2, \qquad \text{for any} \qquad a \geq \rho^+
\end{equation}
and 
\begin{equation}
\label{ec3.2}
\tfrac{1}{\ell}\log P(S_\ell^n \leq \ell a n^\alpha) \leq - I_{\rho^-}(a) + \frac{a}{n^\alpha}\Big(\frac{1}{\rho^-} -\frac{1}{a}\Big)^2, \qquad \text{for any} \qquad a \leq \rho^-
\end{equation}
where $I_\rho(a) = \frac{a}{\rho} -\log \frac{a}{\rho} -1$ is the large deviations rate function of an exponential distribution of mean $\rho$.
\end{lemma}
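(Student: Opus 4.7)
The plan is to prove both bounds by the classical Cram\'er--Chernoff exponential Markov method, taking the Laplace parameter to be $n^{-\alpha}$ times the Legendre optimiser of the limiting exponential rate function $I_\rho$. I will detail the upper tail \eqref{ec3.1}; the lower tail \eqref{ec3.2} will follow by the mirror argument.

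For any $\lambda \geq 0$, the exponential Markov inequality together with independence gives
\[
P(S_\ell^n \geq a n^\alpha \ell) \;\leq\; e^{-\lambda a n^\alpha \ell} \prod_{i=1}^\ell \mc M_{\rho_i n^\alpha}(\lambda) \;\leq\; e^{-\lambda a n^\alpha \ell}\, \mc M_{\rho^+ n^\alpha}(\lambda)^\ell,
\]
where the second inequality uses that $\rho \mapsto \mc M_\rho(\lambda) = (1-\rho(e^\lambda-1))^{-1}$ is non-decreasing in $\rho$ when $\lambda \geq 0$. Taking logarithms and dividing by $\ell$ reduces the task to bounding the function $\lambda \mapsto -\lambda a n^\alpha - \log(1 - \rho^+ n^\alpha(e^\lambda-1))$ at some suitable admissible $\lambda \geq 0$.

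The crucial choice will be $\tilde\lambda := n^{-\alpha}(1/\rho^+ - 1/a) \geq 0$. The linear contribution collapses exactly to $1 - a/\rho^+$, while the Taylor bound $0 \leq e^{\tilde\lambda} - 1 - \tilde\lambda \leq \tfrac12 \tilde\lambda^2 e^{\tilde\lambda}$ combined with $\rho^+ n^\alpha \tilde\lambda = 1 - \rho^+/a$ yields
\[
\rho^+ n^\alpha(e^{\tilde\lambda}-1) = \Bigl(1 - \tfrac{\rho^+}{a}\Bigr) + R_n, \qquad 0 \leq R_n \leq \tfrac{\rho^+ e^{\tilde\lambda}}{2 n^\alpha}\Bigl(\tfrac{1}{\rho^+} - \tfrac{1}{a}\Bigr)^2,
\]
so that $-\log\bigl(1 - \rho^+ n^\alpha(e^{\tilde\lambda}-1)\bigr) = \log(a/\rho^+) - \log\bigl(1 - (a/\rho^+)R_n\bigr)$. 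Combining with the identity $1 - a/\rho^+ + \log(a/\rho^+) = -I_{\rho^+}(a)$ and the elementary inequality $-\log(1-y) \leq y/(1-y)$ for $y \in [0,1)$ will deliver \eqref{ec3.1} once the resulting prefactor of $\frac{a}{n^\alpha}(1/\rho^+ - 1/a)^2$ is verified to be at most $1$.

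For \eqref{ec3.2} I would run the mirror argument: for $a \leq \rho^-$ take $\lambda \leq 0$, use that $\rho \mapsto \mc M_\rho(\lambda)$ is then non-increasing so each factor is dominated by $\mc M_{\rho^- n^\alpha}(\lambda)$, choose $\tilde\lambda := n^{-\alpha}(1/\rho^- - 1/a) \leq 0$, and repeat the Taylor analysis. The main technical obstacle will be non-asymptotic constant tracking: one must show that the $e^{\tilde\lambda}$ factor from the Taylor remainder and the $(1-(a/\rho^\pm)R_n)^{-1}$ factor from the $-\log(1-y)$ estimate can be absorbed into the $1/2$ coefficient coming from Taylor, so that the claimed coefficient $1$ on $\frac{a}{n^\alpha}(1/\rho^\pm - 1/a)^2$ is preserved for every admissible $n, \ell, \rho_i, a$. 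In the degenerate regime where this absorption fails (equivalently, where $\tilde\lambda$ ceases to be an admissible Laplace parameter because $R_n$ exceeds $\rho^+/a$), the stated right-hand side is already non-negative and the inequality is automatic from $\log P \leq 0$; in the complementary regime the correction is of strictly lower order than $I_{\rho^\pm}(a)$ and the factor-of-two margin provided by the Taylor prefactor is comfortably enough.
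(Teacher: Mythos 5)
Your overall strategy (exponential Markov plus the monotonicity of $\rho\mapsto\mc M_\rho(\lambda)$ to reduce to a single worst-case mean) is sound, and the algebra leading to the identity $-\tilde\lambda an^\alpha-\log\bigl(1-\rho^+n^\alpha(e^{\tilde\lambda}-1)\bigr)=-I_{\rho^+}(a)-\log\bigl(1-\tfrac{a}{\rho^+}R_n\bigr)$ is correct. The gap is in how you dispose of the remainder, and it is not just constant tracking. Write $m=\rho^+n^\alpha$ and $r=a/\rho^+$. Your tilt $\tilde\lambda=\tfrac1m(1-\tfrac1r)$ exceeds the admissibility threshold $\log(1+\tfrac1m)$ of $\mc M_{m}$ as soon as $\tfrac{r-1}{r}\ge m\log(1+\tfrac1m)$, i.e.\ roughly when $r\gtrsim 2m$; this is a non-degenerate portion of the range $a\ge\rho^+$, and one the paper actually uses, since \eqref{ec3.1} is later integrated in $z$ up to $+\infty$. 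Your fallback there --- that the right-hand side of \eqref{ec3.1} is then automatically non-negative --- is false: for $m=10$, $r=50$ one gets $-I_{\rho^+}(a)+\tfrac{a}{n^\alpha}(\tfrac1{\rho^+}-\tfrac1a)^2=-(50-\log 50-1)+\tfrac{49^2}{500}\approx-40.3<0$, while $\tilde\lambda$ is inadmissible. Worse, the argument already degenerates strictly inside the admissible region: at $m=10$, $r=21$ the Chernoff exponent at your $\tilde\lambda$ is $\approx-12.9$ whereas the claimed right-hand side is $\approx-15.1$, because $-\log\bigl(1-\tfrac{a}{\rho^+}R_n\bigr)\to+\infty$ as $\tilde\lambda$ approaches the threshold while the allowed error $\tfrac{a}{n^\alpha}(\tfrac1{\rho^+}-\tfrac1a)^2$ stays bounded. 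So the upper-tail half cannot be completed as written. (The lower tail is safer since $\tilde\lambda\le 0$ is always admissible, but even there the absorption of the remainder into the stated error term requires a case analysis on the size of $a$ that you have not supplied.)

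The paper sidesteps all of this by optimizing the Chernoff bound exactly for the geometric law: this gives $\tfrac1\ell\log P(S_\ell^n\ge an^\alpha\ell)\le-\mc I_{\rho^+n^\alpha}(an^\alpha)$ with the optimal tilt automatically inside the domain of $\mc M$, and the lemma then reduces to the purely analytic inequality $\mc I_{\rho n^\alpha}(an^\alpha)\ge I_\rho(a)-\tfrac{(a-\rho)^2n^\alpha}{\rho(1+\rho n^\alpha)(1+an^\alpha)}$, obtained from $-\log(1+x)\ge-x$ and the mean value theorem, the error being dominated by $\tfrac{a}{n^\alpha}(\tfrac1\rho-\tfrac1a)^2$. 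If you want to rescue your computation, replace $\tilde\lambda$ by the exact optimizer $e^{\lambda^*}=\tfrac{an^\alpha(1+\rho^+n^\alpha)}{\rho^+n^\alpha(1+an^\alpha)}$; you will then be carrying out precisely the paper's comparison of rate functions.
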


\begin{proof}
Most of the proof was done above. The first bound stated on this lemma follows by observing that
$$
\mc I_{\rho n^\alpha}(a n^\alpha) \geq
 I_{\rho}(a)  - \frac{(a - \rho)^2 n^\alpha}{\rho {(1+\rho n^\alpha) (1+an^\alpha)}}   \\
$$
for $a \geq \rho^+$. This estimate is obtained by using  the inequality $-\log(1+x) \geq  -x$ in the first term of the definition of $\mathcal I_{\rho}(\cdot)$ and applying the mean value theorem to the second one. In the very same way we obtain
$$
\mc I_{\rho^{-} n^\alpha}(a n^\alpha)  \geq  I_{\rho^-}(a)  - \frac{  (a - \rho)^2}{a \rho (1+ \rho n^\alpha)}, 
$$
for $a \leq \rho^-$. 

\end{proof}

The following lemmas relate  rates $I_\rho(z)$ for different values of $\rho$ and $z$. 

\begin{lemma}
\label{Irho} 
Let  $0<\rho \leq \rho^+$ and   $K^+= K^+(\frac{\rho^+}{\rho})= (\frac{\rho^+}{\rho})^2$, then:
\begin{itemize}
\item[i)] $ I_\rho(z)\leq \big[\frac{z-\rho}{\rho}\big]^2  \ \ \text{for all} \ \ \frac{\rho}{2} < z. $
\item[ii)]
$ I_\rho(a) \leq 16 K^+ I_{\rho^+} (a)  \ \ \text{for all} \ \ a \geq K^+ \rho . $
\end{itemize}  
\end{lemma}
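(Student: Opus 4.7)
My plan is to reduce both parts, via elementary substitutions, to one-variable inequalities that I can handle by calculus. For item (i), set $x = z/\rho$ (with $x > 1/2$) and define $f(x) := (x-1)^2 - (x - 1 - \log x)$. A short computation gives
\[
f'(x) \;=\; 2(x-1) - 1 + \tfrac{1}{x} \;=\; \tfrac{(2x-1)(x-1)}{x},
\]
so on $(1/2, \infty)$ the function $f$ is decreasing on $(1/2, 1)$, increasing on $(1, \infty)$, and $f(1) = 0$; hence $f \geq 0$ throughout, which is the desired estimate.

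For item (ii), set $c = \rho^+/\rho \geq 1$ (so $K^+ = c^2$) and reparametrize by $u = a/\rho^+$; the hypothesis $a \geq K^+\rho$ is exactly $u \geq c$. The inequality to prove becomes
\[
h(u) \;:=\; \frac{I_\rho(u\rho^+)}{I_{\rho^+}(u\rho^+)} \;=\; \frac{cu - 1 - \log(cu)}{u - 1 - \log u} \;\leq\; 16 c^2, \qquad u \geq c.
\]
My strategy is (a) to show $h$ is non-increasing on $[c,\infty)$, so its supremum is attained at $u = c$, and then (b) to verify the endpoint estimate $h(c) \leq 16 c^2$.

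For (a), differentiating the quotient shows $\operatorname{sign}(h'(u)) = \operatorname{sign}(N(u))$ where
\[
N(u) \;:=\; (1-c)\log u + \log c \cdot \tfrac{u-1}{u}, \qquad N'(u) \;=\; \tfrac{(1-c)u + \log c}{u^2}.
\]
The zero $u^\ast = \log c/(c-1)$ of $N'$ satisfies $u^\ast \leq 1 \leq c$ (using $\log c \leq c-1$), so $N$ is decreasing on $[c,\infty)$, while $N(c) = -\log c\,(c-1)^2/c \leq 0$; hence $N \leq 0$ and $h$ is non-increasing. For (b), the inequality amounts to showing $g(c) := 16 c^2(c-1-\log c) - (c^2 - 1 - 2\log c) \geq 0$ for $c \geq 1$. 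One checks $g(1) = g'(1) = 0$, $g''(1) = 12$, and $g'''(c) = 96 - 32/c + 4/c^3 > 64$ on $[1,\infty)$; integrating three times with these initial conditions yields $g \geq 0$.

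The main technical subtlety lies in step (a): the argument crucially uses that the zero $u^\ast = \log c/(c-1)$ of $N'$ lies below $c$, and this is precisely ensured by the hypothesis $u \geq c$. The same hypothesis also rules out the singular behavior of the ratio $h$ near $u = 1$, where $I_{\rho^+}$ vanishes. This explains why the specific quadratic threshold $K^+ = (\rho^+/\rho)^2$ appears in the statement, rather than the naive $\rho^+/\rho$.
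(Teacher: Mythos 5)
Your proof is correct, but for part (ii) it follows a genuinely different route from the paper. For part (i) the two arguments coincide in substance: the paper invokes the elementary inequality $x + x^2 + \ln(1-x)\ge 0$ for $x<1/2$ (with $x = 1 - z/\rho$), which is exactly your statement $f\ge 0$; you simply verify it by the sign analysis of $f'$. For part (ii) the paper instead sandwiches the rate function, $\tfrac14\psi(x-1)\le I(x)\le \psi(x-1)$ for $x\ge 1$ with $\psi(x)=\min\{x,x^2\}$, and then uses the scaling inequality $\psi(x-1)\le \tfrac{4}{\lambda^2}\psi(\lambda x-1)$ for $x\ge \lambda^{-2}$ with $\lambda=\rho/\rho^+$; the three factors of $4$ produce the constant $16K^+$. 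Your approach analyzes the ratio $h(u)=I_\rho(u\rho^+)/I_{\rho^+}(u\rho^+)$ directly, proving monotonicity on $[c,\infty)$ via the sign of $N(u)=(1-c)\log u+\log c\,\tfrac{u-1}{u}$ and then checking the endpoint $u=c$ by a three-fold differentiation of $g$; I verified both the formula for $N$, the bounds $u^*\le 1\le c$ and $N(c)\le 0$, and the values $g(1)=g'(1)=0$, $g''(1)=12$, $g'''>64$, so the argument is sound (the degenerate case $c=1$, where $h\equiv 1$, is trivially covered). The paper's comparison-function argument is shorter and makes transparent where each factor of $4$ comes from, and the two-sided bound on $I$ by $\psi$ is reusable elsewhere; your argument is more self-contained and in fact shows the sharper endpoint bound $h(c)=\frac{c^2-1-2\log c}{c-1-\log c}\sim c$, revealing that the constant $16K^+$ is far from optimal at the threshold $a=K^+\rho$ — the loss in your method occurs only through the crude final estimate $g\ge 0$, whereas in the paper it is built into the equivalence constants for $\psi$.
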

\begin{proof}The first assertion follows by taking $x =1-z/\rho $ in the inequality $x + x^2 + \ln (1-x) \geq 0$, which is valid for $x< 1/2$. For the second assertion, denote $I_\rho(x) = I(\frac x\rho)$ and define $\psi(x) = \min\{x,x^2\}$. Observe that for any $x \geq 1$,
\[
\tfrac{1}{4} \psi(x-1) \leq I(x) \leq \psi(x-1)
\]
and for any $\lambda \in (0,1]$ and  $x \geq \frac{1}{\lambda^2}$
\[
\psi(x-1) \leq \tfrac{4}{\lambda^2} \psi(\lambda x-1). 
\] 
Then take $\lambda = {\rho}/{\rho^+}$ in the  previous inequalities.
  \end{proof}

\begin{lemma}
\label{Irhopsi2}
Fix  $\rho, \tilde \rho>0$ and $0 <   \kappa \leq \tilde \kappa \leq 1 $ such that $\tilde \kappa \rho > \kappa \tilde \rho $. Consider the function $ f_{\rho,\tilde \rho,\kappa \tilde \kappa}: (0,\infty) \to \bb R$ defined as
$$
 f_{\rho,\tilde \rho,\kappa \tilde \kappa} (z) 
 = \kappa  I_{\rho}( z) -    \tilde \kappa I_{\tilde \rho}(z) 
  = \kappa \Big\{ \Big[\frac{1}{\rho} - \frac{1}{\tilde \rho} \Big] z + \ln\Big[\frac{\rho}{\tilde \rho} \Big] \Big\} \ .  
$$
$ f_{\rho,\tilde \rho,\kappa, \tilde \kappa} $ 
attains its maximum at $z^*= (1- \kappa/ \tilde \kappa)(\rho - \kappa \tilde \rho/  \tilde \kappa)^{-1}   \rho \tilde \rho$. Moreover,
$$
f_{\rho,\tilde \rho, \kappa,\tilde \kappa }(z^*) \leq \kappa \tilde \kappa (\tilde \kappa - {\kappa}/{2})[\tilde \kappa \rho - \kappa \tilde \rho]^{-2}[\rho - \tilde \rho]^2\, .
$$
\end{lemma}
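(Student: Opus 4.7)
The plan is to find the critical point of $f$, verify it is a global maximum by concavity, and then reduce the bound on $f(z^*)$ to a single-variable inequality via a convenient reparametrization.

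Using $I_\rho'(z)=\tfrac{1}{\rho}-\tfrac{1}{z}$, I would compute
\[
f'(z)\;=\;\Bigl(\tfrac{\kappa}{\rho}-\tfrac{\tilde\kappa}{\tilde\rho}\Bigr)+\tfrac{\tilde\kappa-\kappa}{z},\qquad f''(z)\;=\;-\tfrac{\tilde\kappa-\kappa}{z^{2}}\;\leq\;0,
\]
so $f$ is concave on $(0,\infty)$. The hypothesis $\tilde\kappa\rho>\kappa\tilde\rho$ makes $\tilde\kappa/\tilde\rho>\kappa/\rho$, so $f'=0$ admits the unique positive root $z^{*}=(\tilde\kappa-\kappa)(\tilde\kappa/\tilde\rho-\kappa/\rho)^{-1}$, which simplifies to the stated expression and is therefore the global maximum.

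To evaluate $f(z^{*})$ I would use the critical-point identity $(\kappa/\rho-\tilde\kappa/\tilde\rho)z^{*}=-(\tilde\kappa-\kappa)$ to cancel the affine part of $f$, obtaining $f(z^{*})=(\tilde\kappa-\kappa)\log z^{*}+\kappa\log\rho-\tilde\kappa\log\tilde\rho$. Introducing $\mu=\kappa/\tilde\kappa\in(0,1]$ and $r=\rho/\tilde\rho$, a brief calculation yields $z^{*}/\rho=(1-\mu)/(r-\mu)$, and dividing by $\tilde\kappa$ collapses everything to
\[
\tilde\kappa^{-1}f(z^{*})\;=\;(1-\mu)\log\tfrac{1-\mu}{r-\mu}+\log r,
\]
while the target upper bound becomes $\mu(1-\mu/2)\,\tfrac{(r-1)^{2}}{(r-\mu)^{2}}$ under the same rescaling.

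The main obstacle is therefore the one-variable inequality
\[
(1-\mu)\log\tfrac{1-\mu}{r-\mu}+\log r\;\leq\;\mu\bigl(1-\tfrac{\mu}{2}\bigr)\tfrac{(r-1)^{2}}{(r-\mu)^{2}}.
\]
Both sides vanish at $r=1$ together with their first derivatives there, so this is intrinsically a second-order statement. A direct Taylor expansion at $r=1$ gives leading coefficients $\tfrac{\mu}{2(1-\mu)}$ on the left and $\tfrac{\mu(1-\mu/2)}{(1-\mu)^{2}}$ on the right, and the comparison reduces to the trivial $\tfrac{1-\mu}{2}\leq 1-\tfrac{\mu}{2}$. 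This already secures the bound in a neighborhood of $r=1$, which is the only regime in which the lemma is used in Section~\ref{s2} (by Lemma~\ref{l3}, the ratio $\rho/\rho^{-}$ differs from $1$ by $O(n^{(\delta-1)/2})$). For a fully global statement I would set $\Phi(r)=\text{RHS}-\text{LHS}$, differentiate and factor $\Phi'(r)=\mu(r-1)N(r)/D(r)$ with $D>0$ and $N$ a quadratic, then locate the roots of $N$ and use the monotonicity intervals together with $\Phi(1)=0$ to conclude wherever the factorisation gives $\Phi\geq 0$.
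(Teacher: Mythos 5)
Your computation of the critical point, the concavity argument, the identity $f(z^*)=(\tilde\kappa-\kappa)\log z^*+\kappa\log\rho-\tilde\kappa\log\tilde\rho$, and the reduction to a one-variable inequality in $(\mu,r)=(\kappa/\tilde\kappa,\rho/\tilde\rho)$ are all correct. The gap is in the final step. Matching Taylor coefficients of the two sides to second order at $r=1$ does not by itself prove the inequality on any neighbourhood of $r=1$ (you would need to control the remainder), and the deferred ``global'' argument cannot be completed because the inequality --- and hence the lemma as literally stated --- fails for $r$ away from $1$. Concretely, with $\tilde\kappa=1$, $\kappa=10^{-2}$, $\tilde\rho=1$, $\rho=5$ one gets $z^*\approx 0.992$ and $f(z^*)\approx 8.13\times 10^{-3}$, while the claimed bound is $\approx 6.39\times 10^{-3}$; equivalently, your inequality at first order in $\mu$ forces $\log r+\tfrac1r-1\le (r-1)^2/r^2$, which is false for $r\geq 5$. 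So ``conclude wherever the factorisation gives $\Phi\ge 0$'' cannot close the argument: a restriction on $\rho/\tilde\rho$ is genuinely needed, and your write-up neither supplies it nor proves the bound under it.

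The paper's proof takes a different and much shorter route which also reveals the hidden restriction. Set $x_1=1-z^*/\rho=\tilde\kappa(\rho-\tilde\rho)/(\tilde\kappa\rho-\kappa\tilde\rho)$ and $x_2=1-z^*/\tilde\rho=\kappa(\rho-\tilde\rho)/(\tilde\kappa\rho-\kappa\tilde\rho)$, and note $I_\rho(z)=-x-\ln(1-x)$ with $x=1-z/\rho$. The elementary inequalities $x+x^2+\ln(1-x)\ge 0$ for $x<1/2$ and $x+x^2/2+\ln(1-x)\le 0$ for $0\le x<1$ give $I_\rho(z^*)\le x_1^2$ and $I_{\tilde\rho}(z^*)\ge x_2^2/2$, whence
\begin{equation*}
f(z^*)\;\le\;\kappa x_1^2-\tfrac{\tilde\kappa}{2}\,x_2^2\;=\;\kappa\tilde\kappa\big(\tilde\kappa-\kappa/2\big)\,\frac{(\rho-\tilde\rho)^2}{(\tilde\kappa\rho-\kappa\tilde\rho)^2},
\end{equation*}
which is exactly the stated constant --- but only when $x_1<1/2$, i.e.\ $\rho/\tilde\rho<2-\kappa/\tilde\kappa$ (in the application $\tilde\rho=\rho^-\le\rho$ and $\rho/\rho^-\to 1$ by Lemma \ref{l3}, so this holds). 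To repair your proof, either adopt these two explicit bounds, or prove your one-variable inequality on the range $\mu<r<2-\mu$ with a quantified remainder; in either case the extra hypothesis on $\rho/\tilde\rho$ must be recorded, since without it the conclusion is false.
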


\begin{proof}
To obtain $z^*$ it is enough to note that 
$$
f'_{\rho,\tilde \rho,\kappa,\tilde \kappa}(z)=I'_{\rho}( z)\Big[\kappa -\tilde \kappa \frac{\rho(z-\tilde \rho)}{ \tilde \rho(z-\rho)}\Big]\,, 
$$
a fact that follows from the identity
$ \frac{ I'_{\tilde \rho}(z)}{I'_{\rho}( z)} =    \frac{\rho(z-\tilde \rho)}{\tilde \rho(z -\rho)}$. 
To prove the last assertion of the lemma, use the inequalities $x + x^2 + \ln (1-x) \geq 0$ for $x< 1/2$ and $x + x^2 / 2 + \ln (1-x) \leq 0$ for $x< 1$.
\end{proof}

\section{Discrete approximations of diffusion equations}
\label{s5}
In this section we prove Theorem \ref{t2}. Although our proof is completely analytic, we want to stress that most of the computations were guided by probabilistic arguments, like for example the {\em attractiveness} of the zero-range process, which is a probabilistic counterpart of the strong maximum principle. 

The structure of the proof is also borrowed from the usual way to prove convergence in distribution of stochastic processes: first we prove that the discrete approximations are tight in a convenient functional space, then we show that any limit point is a solution of the hydrodynamic equation \eqref{echid} and finally we obtain a uniqueness criterion for such solutions of \eqref{echid}, which implies uniqueness of the limit point and convergence of the discrete approximations to this unique point.

We start obtaining various properties of the discrete approximations. Then we will use the properties as building blocks for the proof of Theorem \ref{t2}.

Let us recall, for the convenience of the reader, that fixed an initial condition $u_0: \bb T \to [0,+\infty)$, we define $u_0^n: \bb T_n \to [0,\infty)$ by $u_0^n(x) = u_0(\frac{x}{n})$, $x \in \bb T_n$, and  the system of ODE's \eqref{ec1.4.3} is given by
\begin{equation*}
\tfrac{d}{dt} u_t^n(x) = \Delta_n \phi_n(u_t^n(x))
\end{equation*}
with initial data $u_0^n$, where $\phi_n(u) = n^\alpha\phi(n^\alpha u)$ and $\phi(\rho) = \rho/(1+\rho)$.

\subsection*{Uniqueness of solutions}

Notice that $0 \leq \phi_n'(u) \leq n^{2\alpha}$. Therefore, the right-hand side of \eqref{ec1.4.3} is Lipschitz as a function of $u_t^n$ and the conditions for existence and uniqueness of global solutions are fulfilled. For further reference, we state this as a lemma:

\begin{lemma}
\label{l5.1} For any initial condition $u_0^n: \bb T_n \to [0,\infty)$ there exists a unique local solution $\{u_t^n(x); t \in [0,\tau], x \in \bb T_n\}$ of \eqref{ec1.4.3}. This local solution can be extended to a global solution of \eqref{ec1.4.3} in a unique way. The resulting solution is continuous in $t$ and also continuous as a function of the initial condition $u_0^n$.
\end{lemma}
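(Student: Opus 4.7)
The approach is a straightforward application of the Picard--Lindel\"of theorem followed by a standard continuation argument. The key observation is that since $\bb T_n$ is a finite set, \eqref{ec1.4.3} is a finite-dimensional ODE system on $\bb R^{\bb T_n}$, so I just need to check Lipschitz continuity of the vector field and non-escape from the state space $[0,\infty)^{\bb T_n}$.

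First I would compute $\phi_n'(u) = n^{2\alpha}/(1 + n^{\alpha} u)^2$, which, for $u \geq 0$, lies in $[0, n^{2\alpha}]$. Hence $\phi_n$ is globally Lipschitz on $[0,\infty)$ with Lipschitz constant $n^{2\alpha}$, and in fact $\phi_n$ extends to a $\mc C^1$ function on all of $\bb R$ with bounded derivative. The vector field $F_n: \bb R^{\bb T_n} \to \bb R^{\bb T_n}$ defined by $F_n(u)(x) = \Delta_n \phi_n(u(x))$ is a composition of $\phi_n$ (applied componentwise) with the bounded linear operator $\Delta_n$, so $F_n$ is globally Lipschitz on $\bb R^{\bb T_n}$ with constant at most $4 n^{2 + 2\alpha}$. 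The Picard--Lindel\"of theorem then gives, for each initial condition $u_0^n$, a unique $\mc C^1$ local solution on some interval $[0, \tau]$, which by global Lipschitzness extends uniquely to a global solution $\{u_t^n; t \geq 0\}$ on $\bb R^{\bb T_n}$.

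Next I would verify that if $u_0^n(x) \geq 0$ for all $x \in \bb T_n$, then $u_t^n(x) \geq 0$ for all $t \geq 0$ and $x \in \bb T_n$. The cleanest way is to observe that $\phi_n(0) = 0$ and $\phi_n \geq 0$ on $[0,\infty)$, so whenever $u_t^n(x_0) = 0$ we have
\[
\tfrac{d}{dt} u_t^n(x_0) \;=\; n^2\bigl[\phi_n(u_t^n(x_0+1)) + \phi_n(u_t^n(x_0-1))\bigr] \;\geq\; 0,
\]
which prevents the trajectory from crossing into the region where some coordinate is negative. A slick variant is to work with $F_n$ modified by replacing $\phi_n$ by $\phi_n \circ (\,\cdot\,)_+$; uniqueness forces the two solutions to agree as long as they stay non-negative, and the modified field preserves $[0,\infty)^{\bb T_n}$ by the same boundary argument.

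Finally, $\mc C^1$-regularity in $t$ is immediate from the ODE, and continuous dependence on the initial condition follows from Gronwall: if $v_t^n$ solves \eqref{ec1.4.3} with initial data $v_0^n$, then
\[
\|u_t^n - v_t^n\|_\infty \;\leq\; \|u_0^n - v_0^n\|_\infty \, e^{4 n^{2+2\alpha} t}.
\]
I do not foresee any serious obstacle here; the only subtlety is the invariance of $[0,\infty)^{\bb T_n}$, which is handled either by the boundary computation above or by extending $\phi_n$ evenly/by truncation before invoking Picard--Lindel\"of. Everything else is textbook finite-dimensional ODE theory.
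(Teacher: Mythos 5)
Your proposal is correct and follows essentially the same route as the paper, which simply observes that $0 \leq \phi_n'(u) \leq n^{2\alpha}$ makes the right-hand side of \eqref{ec1.4.3} a globally Lipschitz vector field on the finite-dimensional space $\bb R^{\bb T_n}$ and invokes standard existence, uniqueness, and continuous-dependence theory. You supply more detail than the paper does --- in particular the invariance of $[0,\infty)^{\bb T_n}$, which the paper leaves implicit --- but the underlying argument is the same.
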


 \subsection*{Strong maximum principle}
 
Let $\{u_t^n(x); t \geq 0, x \in \bb T_n\}$ and $\{v_t^n(x); t \geq 0, x \in \bb T_n\}$  be two solutions of \eqref{ec1.4.3} with initial conditions $u_0^n$ and $v_0^n$, respectively. The following lemma is what is known as the {\em strong maximum principle} for equation \eqref{ec1.4.3}:

\begin{lemma}
\label{l5.2}
Assume that $u_0^n(x) \leq v_0^n(x)$ for any $x \in \bb T_n$. Then  
\[
u_t^n(x) \leq v_t^n(x)
\]
for any $t \geq 0$ and any $x \in \bb T_n$.
\end{lemma}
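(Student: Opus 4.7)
The plan is to reduce the strong maximum principle to a quasimonotonicity property of the right-hand side of \eqref{ec1.4.3}, then run a standard first-touching-time argument after slightly perturbing $v^n$. The key observation is that, for any $u, v : \bb T_n \to [0,\infty)$ with $u \leq v$ pointwise and $u(x_0) = v(x_0)$ at some site $x_0 \in \bb T_n$, one has $\Delta_n \phi_n(u(x_0)) \leq \Delta_n \phi_n(v(x_0))$. Indeed, the monotonicity of $\phi_n$ gives $\phi_n(u(x_0 \pm 1)) \leq \phi_n(v(x_0 \pm 1))$, while the central term is the same on both sides; writing out the definition of $\Delta_n$ yields the claim.

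Given this, I would introduce, for each $\epsilon > 0$, the perturbed solution $v_t^{n,\epsilon}$ of
\begin{equation*}
\tfrac{d}{dt} v_t^{n,\epsilon}(x) \;=\; \Delta_n \phi_n(v_t^{n,\epsilon}(x)) + \epsilon, \qquad v_0^{n,\epsilon}(x) \;=\; v_0^n(x) + \epsilon.
\end{equation*}
The same Lipschitz bound used in Lemma \ref{l5.1} yields existence, uniqueness, and continuous dependence on $\epsilon$, so $v_t^{n,\epsilon} \to v_t^n$ uniformly on compact time intervals as $\epsilon \to 0$. I would then argue by contradiction that $v_t^{n,\epsilon}(x) > u_t^n(x)$ for all $t \geq 0$ and $x \in \bb T_n$. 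Set $t^* := \inf\{t \geq 0 : \min_{x \in \bb T_n}[v_t^{n,\epsilon}(x) - u_t^n(x)] \leq 0\}$. Since $\bb T_n$ is finite and the trajectories are $\mc C^1$ in time, the map $t \mapsto \min_x[v_t^{n,\epsilon}(x) - u_t^n(x)]$ is Lipschitz; it equals $\epsilon$ at $t=0$, so $t^* > 0$, and if $t^* < \infty$, by continuity the infimum is attained at some $x^* \in \bb T_n$ with equality $v_{t^*}^{n,\epsilon}(x^*) = u_{t^*}^n(x^*)$ while $v_{t^*}^{n,\epsilon} \geq u_{t^*}^n$ everywhere. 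Applying the quasimonotonicity from the first step at $(t^*, x^*)$,
\begin{equation*}
\tfrac{d}{dt}\bigl(v_t^{n,\epsilon}(x^*) - u_t^n(x^*)\bigr)\Big|_{t = t^*} \;=\; \Delta_n \phi_n(v_{t^*}^{n,\epsilon}(x^*)) + \epsilon - \Delta_n \phi_n(u_{t^*}^n(x^*)) \;\geq\; \epsilon > 0,
\end{equation*}
contradicting the fact that $v_t^{n,\epsilon}(x^*) - u_t^n(x^*)$ is strictly positive on $[0, t^*)$ and vanishes at $t^*$. Hence $t^* = +\infty$, proving the claim.

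Finally, letting $\epsilon \to 0$ and invoking continuous dependence gives $v_t^n(x) \geq u_t^n(x)$ pointwise, as desired. The argument is essentially routine; no hard step is anticipated. The only delicate point is that the positive perturbation $\epsilon$ is indispensable: without it, quasimonotonicity only yields non-strict inequality for $\tfrac{d}{dt}(v^n - u^n)$ at $(t^*, x^*)$, which is not enough to rule out first contact. The strict positivity of $t^*$ and the existence of a first contact site $x^*$ are both guaranteed by the finiteness of $\bb T_n$ together with the $\mc C^1$ regularity in time ensured by Lemma \ref{l5.1}.
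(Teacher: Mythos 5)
Your proof is correct, but it reaches the conclusion by a genuinely different mechanism at the contact point. Both arguments are first-touching-time arguments regularized by an $\epsilon$-perturbation that is removed at the end via continuous dependence, but the paper perturbs only the initial datum: it sets $\delta_t^n = v_t^n - u_t^n$ and rewrites the difference of the two equations as the \emph{linear} equation $\tfrac{d}{dt}\delta_t^n(x) = \Delta_n\{H_t^n(x)\delta_t^n(x)\}$ with a strictly positive coefficient $H_t^n(x) = n^{2\alpha}(1+n^\alpha u_t^n(x))^{-1}(1+n^\alpha v_t^n(x))^{-1}$. At the first contact time $\tau$ this only yields the non-strict information $\Delta_n\{H_\tau^n \delta_\tau^n\}(x_0) \leq 0$, which combined with $\delta_\tau^n \geq 0$ forces $\delta_\tau^n$ to vanish at the neighbours of $x_0$; iterating propagates the zero over all of $\bb T_n$, and the uniqueness statement of Lemma \ref{l5.1} is then invoked to conclude $\delta_t^n \equiv 0$ for $t \geq \tau$. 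You avoid both the linearization and this zero-propagation/uniqueness step by perturbing the \emph{equation} as well, turning $v^{n,\epsilon}$ into a strict supersolution: your quasimonotonicity observation (monotonicity of $\phi_n$ together with the structure of $\Delta_n$) gives $\tfrac{d}{dt}\bigl(v^{n,\epsilon}-u^n\bigr)(x^*)\big|_{t^*} \geq \epsilon > 0$ at the contact point, which immediately contradicts the sign of the left derivative there. Your version is shorter and self-contained; the paper's version has the side benefit of exhibiting the linearized equation, which admits a probabilistic interpretation as the Fokker--Planck equation of an inhomogeneous random walk and makes the link with attractiveness transparent. One point you share with the paper and could make explicit is that global well-posedness of the perturbed system, and the fact that $v^{n,\epsilon}$ stays in the domain of $\phi_n$, follow from the same Lipschitz bound $0 \leq \phi_n' \leq n^{2\alpha}$ used in Lemma \ref{l5.1}, since the forcing $+\epsilon$ only pushes the solution upwards.
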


\begin{proof}
Let us define $\{\delta_t^n(x); t \geq 0, x \in \bb T_n\}$ as
\[
\delta_t^n(x) = v_t^n(x) -u_t^n(x)
\]
for any $t \geq 0$ and any $x \in \bb T_n$.
Notice that for any $u,v \in [0,\infty)$,
\[
\phi_n(v) - \phi_n(u) = \frac{n^{2\alpha}(v - u)}{(1+ n^{\alpha}u)(1+n^{\alpha}v)}.
\]
Therefore,
\begin{equation}
\label{ec5.4}
\tfrac{d}{dt} \delta_t^n(x) = \Delta_n \big\{ H_t^n(x) \delta_t^n(x)\big\},
\: \: \: \: \footnote{
At this point, the lemma has a simple probabilistic proof. Equation \eqref{ec5.4} is the forward Fokker-Planck equation of a inhomogeneous random walk in $\bb T_n$ with the following dynamics: if the walk rests at site $x$ at time $t$, it waits a Poissonian time of instantaneous rate $H_t^n(x)$, at the end of which it jumps to a neighboring site chosen with uniform probability, Since the solution of the Fokker-Planck equation is the space-time distribution of the random walk, $\delta_t^n(x)$ is always non-negative.
}
\end{equation}
where $\{H_t^n(x); t \geq 0, x \in \bb T_n\}$ is given by
\[
H_t^n(x) = \frac{n^{2\alpha}}{(1+ n^{\alpha}u_t^n(x))(1+n^{\alpha}v_t^n(x))}
\]
for any $t \geq 0$ and any $x \in \bb T_n$. Notice that equation \eqref{ec5.4} inherits uniqueness, continuity in time and continuity with respect to the initial condition from the corresponding properties of \eqref{ec1.4.3}, see Lemma \ref{l5.1}. The proof is concluded by showing that 
\begin{equation}
\label{ec5.6}
\delta_0^n(x) \geq 0 \text{ for any } x \in \bb T_n \implies \delta_t^n(x) \geq 0 \text{ for any } t \geq 0 \text{ and any } x \in \bb T_n.
\end{equation}
This is actually a consequence of the fact that $H_t^n(x) > 0$ for any $t \geq 0$ and any $x \in \bb T_n$. We start proving the (seemingly) weaker statement
\begin{equation}
\label{ec5.7}
\delta_0^n(x) \geq \epsilon > 0 \text{ for any } x \in \bb T_n \implies \delta_t^n(x) \geq 0 \text{ for any } t \geq 0 \text{ and any } x \in \bb T_n.
\end{equation}
In this case, define
\[
\tau = \inf\big\{t \geq 0; \min_{x \in \bb T_n} \delta_t^n(x) = 0\big\}.
\]
If $\tau = +\infty$ there is nothing to prove. Assume that $\tau <+\infty$. By continuity $\tau >0$ and there exists $x_0 \in \bb T_n$ such that $\delta_{\tau}^n(x_0) =0$, and $\delta_t^n(x) >0$ for any $0 \leq t< \tau$ and any $x \in \bb T_n$. Therefore $\frac{d}{dt} \delta_{\tau}^n(x_0) \leq 0$ and by \eqref{ec5.4} $\Delta_n\{ H_\tau^n(x_0) \delta_\tau^n(x_0)\} \leq0$. Since $H_\tau^n$ is strictly positive and $\delta_\tau^n(x_0)=0$, we conclude that $\delta_\tau^n(x)=0$ for any $x \sim x_0$. Iterating this argument we conclude that $\delta_\tau^n(x) =0 $ for any $x \in \bb T_n$. By uniqueness of solutions, we conclude that $\delta_t^n \equiv 0$ for any $t \geq \tau$, which proves the weaker property \eqref{ec5.7}. By continuity with respect to the initial condition, taking $\epsilon \to 0$ we see that \eqref{ec5.6} is actually equivalent to \eqref{ec5.7}, which proves the lemma.
\end{proof}

Notice that constant functions in space and time are solutions of \eqref{ec1.4.3}. Therefore, the strong maximum principle has the following corollary:

\begin{corollary}[Weak maximum principle]
\label{c5.1}
For any $t \geq 0$,
\[
\min_{x \in \bb T_n} u_0^n(x) \leq \min_{x \in \bb T_n} u_t^n(x) \leq \max_{x \in \bb T_n} u_t^n(x) \leq \max_{x \in \bb T_n} u_0^n(x)
\]
\end{corollary}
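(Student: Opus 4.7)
The plan is to deduce the weak maximum principle directly from the strong maximum principle (Lemma \ref{l5.2}) by comparing $u^n$ against suitably chosen constant-in-space-and-time solutions of \eqref{ec1.4.3}. The starting observation is that for every constant $c \geq 0$, the function $(t,x) \mapsto c$ solves \eqref{ec1.4.3}: the discrete Laplacian annihilates spatially constant functions, so $\Delta_n \phi_n(c) = 0$, which matches the vanishing time derivative. By the uniqueness clause of Lemma \ref{l5.1}, this constant function is the unique global solution with initial datum identically equal to $c$.

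With this in hand, set $M := \max_{x \in \bb T_n} u_0^n(x)$ and let $v_t^n(x) \equiv M$ be the corresponding constant solution. Since $u_0^n(x) \leq M = v_0^n(x)$ for every $x \in \bb T_n$, Lemma \ref{l5.2} yields $u_t^n(x) \leq v_t^n(x) = M$ for all $t \geq 0$ and all $x$, which gives the upper bound $\max_{x \in \bb T_n} u_t^n(x) \leq \max_{x \in \bb T_n} u_0^n(x)$. Symmetrically, setting $m := \min_{x \in \bb T_n} u_0^n(x)$ and letting $w_t^n(x) \equiv m$, we have $w_0^n(x) = m \leq u_0^n(x)$, so Lemma \ref{l5.2} applied with the roles of $u^n$ and $v^n$ exchanged gives $m \leq u_t^n(x)$, i.e.\ the lower bound $\min_{x \in \bb T_n} u_0^n(x) \leq \min_{x \in \bb T_n} u_t^n(x)$. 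The middle inequality $\min_{x \in \bb T_n} u_t^n(x) \leq \max_{x \in \bb T_n} u_t^n(x)$ is immediate.

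There is no real obstacle in this argument; all the work has been done in establishing Lemma \ref{l5.2}, and the only point worth emphasising is the existence and uniqueness of constant solutions, which is why Lemma \ref{l5.1} is invoked before applying the comparison principle.
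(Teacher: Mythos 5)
Your argument is correct and matches the paper's own reasoning: the paper derives the corollary precisely by noting that constants are solutions of \eqref{ec1.4.3} and invoking the strong maximum principle of Lemma \ref{l5.2}. Your write-up merely makes explicit the two comparisons (with $M$ and with $m$) that the paper leaves implicit.
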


\begin{remark}
Notice that the total mass
\[
\sum_{x \in \bb T_n} u_t^n(x)
\]
is preserved by the evolution. This fact rules out {\em a posteriori} the possibility $\tau <+\infty$, a fact which is not a consequence {\em a priori} of the strong maximum principle. 
\end{remark}

\subsection*{The energy estimate}

Define the {\em energy} of a function $u: \bb T_n \to \bb R$ as 
\[
\mc E_n(u) = \sum_{x \in \bb T_n} n \big\{ \phi_n\big(u(x+1)) - \phi_n\big(u(x)\big)\big\}^2.
\]
Notice that 
\[
\tfrac{d}{dt} \mc E_n(u_t^n) = -\frac{2}{n} \sum_{x \in \bb T_n} \phi_n'(u_t^n(x)) \big( \Delta_n \phi_t^n(x)\big)^2
\]
and the energy of a solution of \eqref{ec1.4.3} is decreasing in time. In particular, at any positive time $t$, the energy of $u_t^n$ is non-negative and bounded above by $\mc E_n(u_0^n)$. For further reference, we state this fact as a lemma:

\begin{lemma}
\label{l5.3}
Let $\{u_t^n(x); t \geq 0, x \in \bb T_n\}$ be a solution  of \eqref{ec1.4.3}. Then
\[
\sum_{x \in \bb T_n} n \big( \phi_t^n(x+1)-\phi_t^n(x)\big)^2 \leq \sum_{x \in \bb T_n} n \big( \phi_0^n(x+1)-\phi_0^n(x)\big)^2
\]
for any $t \geq 0$.
\end{lemma}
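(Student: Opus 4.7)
The strategy is to verify the identity for $\tfrac{d}{dt}\mc E_n(u_t^n)$ stated just before the lemma and then observe that it yields a non-positive derivative.

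First, because $\bb T_n$ is finite we can differentiate $\mc E_n(u_t^n)$ term by term. Writing $\phi_t^n(x) = \phi_n(u_t^n(x))$, this gives
\begin{equation*}
\tfrac{d}{dt} \mc E_n(u_t^n) = 2 \sum_{x \in \bb T_n} n \bigl(\phi_t^n(x+1) - \phi_t^n(x)\bigr)\bigl(\partial_t \phi_t^n(x+1) - \partial_t \phi_t^n(x)\bigr).
\end{equation*}
Next I would apply discrete summation by parts on the periodic torus. Setting $a_x = \phi_t^n(x)$ and $b_x = \partial_t \phi_t^n(x)$, the identity
\begin{equation*}
\sum_{x \in \bb T_n} (a_{x+1} - a_x)(b_{x+1} - b_x) = -\sum_{x \in \bb T_n} b_x\,(a_{x+1} - 2 a_x + a_{x-1})
\end{equation*}
holds without boundary terms (everything is periodic). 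Recognising $a_{x+1} - 2 a_x + a_{x-1} = n^{-2}\Delta_n a_x$ and absorbing the factor $n$ from the definition of $\mc E_n$, this transforms the derivative into
\begin{equation*}
\tfrac{d}{dt} \mc E_n(u_t^n) = -\tfrac{2}{n}\sum_{x \in \bb T_n} \partial_t \phi_t^n(x)\,\Delta_n \phi_t^n(x).
\end{equation*}

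Now I would invoke the chain rule combined with the ODE \eqref{ec1.4.3}: since $\phi_t^n(x) = \phi_n(u_t^n(x))$ and $\tfrac{d}{dt} u_t^n(x) = \Delta_n \phi_t^n(x)$, one has $\partial_t \phi_t^n(x) = \phi_n'(u_t^n(x))\,\Delta_n \phi_t^n(x)$. Substituting yields
\begin{equation*}
\tfrac{d}{dt} \mc E_n(u_t^n) = -\tfrac{2}{n}\sum_{x \in \bb T_n} \phi_n'(u_t^n(x))\,\bigl(\Delta_n \phi_t^n(x)\bigr)^2,
\end{equation*}
which is precisely the identity displayed before the lemma. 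Since $\phi_n'(\rho) = n^{2\alpha}/(1+n^\alpha \rho)^2 \geq 0$ for all $\rho \geq 0$ (and the weak maximum principle from Corollary \ref{c5.1} guarantees $u_t^n(x) \geq 0$ when $u_0^n \geq 0$), the right-hand side is non-positive. Hence $t \mapsto \mc E_n(u_t^n)$ is non-increasing, and integrating from $0$ to $t$ gives the claimed bound.

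There is no real obstacle here beyond careful bookkeeping of the discrete summation by parts and the chain rule; the only thing worth flagging is that everything stays finite because Lemma \ref{l5.1} provides a global smooth-in-$t$ solution, so the term-by-term differentiation and the summation by parts are both rigorously justified.
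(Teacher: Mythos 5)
Your proof is correct and follows exactly the route the paper takes: the paper simply asserts the identity $\tfrac{d}{dt}\mc E_n(u_t^n) = -\tfrac{2}{n}\sum_{x}\phi_n'(u_t^n(x))(\Delta_n\phi_t^n(x))^2$ and concludes monotonicity, while you supply the term-by-term differentiation, the periodic summation by parts, and the chain rule that justify it. No gaps.
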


The utility of the energy estimate comes from the following estimate, which is just a discrete version of Poincar\'e's inequality:

\begin{lemma}
\label{l5.4}
For any function $u: \bb T_n \to \bb R$ and any $x,y \in \bb T_n$,
\[
\big| \phi_n(u(y)) - \phi_n(u(x))\big| \leq \mc E_n(u)^{1/2} \Big| \frac{y-x}{n}\Big|^{1/2}.
\]
\end{lemma}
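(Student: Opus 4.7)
The plan is to prove Lemma \ref{l5.4} by a standard telescoping-plus-Cauchy-Schwarz argument; this is the discrete analogue of the one-dimensional Sobolev embedding $H^1 \hookrightarrow C^{1/2}$.

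First, fix $x,y \in \bb T_n$ and choose a shortest path along the circle from $x$ to $y$, say through sites $x = z_0, z_1, \dots, z_k = y$ where $k = |y-x|$ (interpreted as the graph distance on $\bb T_n$) and each $|z_{i+1}-z_i|=1$. Write the difference as a telescoping sum
\[
\phi_n(u(y)) - \phi_n(u(x)) \;=\; \sum_{i=0}^{k-1} \bigl(\phi_n(u(z_{i+1})) - \phi_n(u(z_i))\bigr).
\]
Then apply the Cauchy-Schwarz inequality to obtain
\[
\bigl|\phi_n(u(y)) - \phi_n(u(x))\bigr|^2 \;\leq\; k \sum_{i=0}^{k-1} \bigl(\phi_n(u(z_{i+1})) - \phi_n(u(z_i))\bigr)^2.
\]

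Next, bound the sum on the right by extending it to the full torus: since each term is nonnegative and the chosen path visits each directed edge at most once,
\[
\sum_{i=0}^{k-1} \bigl(\phi_n(u(z_{i+1})) - \phi_n(u(z_i))\bigr)^2 \;\leq\; \sum_{z \in \bb T_n} \bigl(\phi_n(u(z+1)) - \phi_n(u(z))\bigr)^2 \;=\; \frac{\mc E_n(u)}{n},
\]
by the definition of $\mc E_n(u)$. Combining these two bounds,
\[
\bigl|\phi_n(u(y)) - \phi_n(u(x))\bigr|^2 \;\leq\; \frac{k \, \mc E_n(u)}{n} \;=\; \mc E_n(u) \, \Bigl|\frac{y-x}{n}\Bigr|,
\]
and the desired estimate follows upon taking square roots.

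There is no genuine obstacle here: the only mild subtlety is the periodicity, which is handled by choosing the shorter of the two arcs connecting $x$ to $y$ on $\bb T_n$ (this only makes the bound sharper). Note that the argument uses nothing about the specific form of $\phi_n$; it is a general statement about discrete functions on $\bb T_n$ written for the composition $\phi_n \circ u$.
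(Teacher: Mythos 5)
Your proof is correct, and it is exactly the argument the paper has in mind: the paper's own remark after Lemma \ref{l5.4} says the proof is ``a direct application of Cauchy-Schwarz inequality'' and omits the details, which are precisely your telescoping-plus-Cauchy-Schwarz computation. No further comment is needed.
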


\begin{remark}
The proof of this lemma is a direct application of Cauchy-Schwarz inequality, and we omit it.
\end{remark}
\begin{remark}
\label{r5.7}
If we take $u_n(x) = u(\frac{x}{n})$, where $u: \bb T \to \bb  R$ is of class $\mc C^1(\bb T)$, then $\mc E_n(u_n) \to \int \phi'(u(x))^2 u'(x)^2 dx$ and the functions $u_n$ are H\"older continuous of index $\frac{1}{2}$, uniformly in $n$ as soon as this integral is finite.
\end{remark}

\subsection*{Time regularity}
Notice that Remark \ref{r5.7} and Lemma \ref{l5.4} can be combined to show that the discrete approximations $\{u^n_t(x); x \in \bb T_n, t \geq 0\}$ are equicontinuous in space, given that the initial data $u_0$ is of class $\mc C^1(\bb T)$ and $\int \phi'(u(x))^2 u(x)^2 dx <+\infty$. Therefore, it would be nice to show that the discrete approximations are also regular in time. Notice that the time derivative of $u_t^n$ is equal to $\Delta_n \phi_t^n(x)$. Therefore, if we are able to prove that $\Delta_n \phi_t^n(x)$ is uniformly bounded, that would imply that $u_t^n(x)$ is uniformly Lipschitz in time, giving the desired regularity.

Define $\{\psi_t^n(x); x \in \bb T_n; t \geq 0\}$ as
\[
\psi_t^n(x) = \phi_n'(u_t^n(x)) \Delta_n \phi_t^n(x) 
\]
for any $x \in \bb T_n$ and any $t \geq 0$. The function $\psi_t^n$ satisfies the equation
\begin{equation}\label{EL}
\tfrac{d}{dt} \psi_t^n(x) = \phi_n'(u_t^n(x)) \Delta_n \psi_t^n(x) + \frac{\phi_n''(u_t^n(x))}{\phi_n'(u_t^n(x))^2} \psi_t^n(x)^2.
\end{equation}
Notice that $\frac{\phi_n''(u)}{\phi_n'(u)^2} = -2(n^{-\alpha}+u)$. Define 
\[
\Psi_n(t) = \max_{x \in \bb T_n} \big|\psi_t^n(x)\big|, \quad \bar{u}_n= \max_{x \in \bb T_n} u_0^n(x).
\]
Recall that by Corollary \ref{c5.1}, $u_t^n(x) \leq \bar{u}_n$ for any $x \in \bb T_n$ and any $t \geq 0$.
We have that
\[
\tfrac{d}{dt} \Psi_n(t)  \leq 2(n^{-\alpha}+\bar{u}_n) \Psi_n(t)^2.
\]
Integrating the corresponding ODE we conclude that
\[
\Psi_n(t) \leq 2 \Psi_n(0)
\]
whenever
\[
t \leq \frac{1}{ 4 \Psi_n(0)(n^{-\alpha}+\bar{u}_n)}.
\]
Recall that $\phi_n'(u) = (n^{-\alpha}+u)^{-2}$ and recall that $\frac{d}{dt} u_t^n(x) = \phi_n'(u_t^n(x))^{-1} \psi_t^n(x)$. We have proved the following estimate for the time derivative of $u_t^n$:

\begin{lemma}
\label{l5.5}
Let $\{u_t^n(x); t \geq 0, x \in \bb T_n\}$ be a solution of \eqref{ec1.4.3}. Then
\[
\sup_{\substack{x \in \bb T_n \\ 0 \leq s \leq T}} \Big|\tfrac{d}{dt} u_s^n(x) \Big|
\leq 2 (n^{-\alpha}+\bar{u}_n)^2 \bar{\Psi}_n,
\]
where 
\[
\bar{\Psi}_n = \max_{x \in \bb T_n} \frac{\big|\Delta \phi_0^n(x)\big|}{\phi_n'(u_0^n(x))}, \quad \bar{u}_n = \max_{x \in \bb T_n} u_0^n(x)
\]
and
\[
T = \frac{1}{4 \bar{\Psi}_n(n^{-\alpha}+\bar{u}_n)}.
\]
\end{lemma}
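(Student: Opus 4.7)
The plan is a discrete maximum-principle argument applied to the auxiliary quantity $\psi_t^n(x) = \phi_n'(u_t^n(x))\Delta_n\phi_t^n(x)$, exploiting the fact that $\phi_n''(u)/\phi_n'(u)^2 = -2(n^{-\alpha}+u)$ is strictly negative, which makes the reaction term in the evolution equation \eqref{EL} for $\psi_t^n$ dissipative. The first step is to verify \eqref{EL} itself by differentiating the definition of $\psi_t^n$ in $t$, using the chain rule together with \eqref{ec1.4.3} and rearranging to expose the diffusive contribution $\phi_n'(u_t^n)\Delta_n\psi_t^n$ alongside the quadratic reaction term.

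Setting $\Psi_n(t) = \max_{x \in \bb T_n}|\psi_t^n(x)|$, I would next establish the Bernoulli-type inequality
\[
\tfrac{d}{dt}\Psi_n(t) \leq 2(n^{-\alpha}+\bar u_n)\Psi_n(t)^2.
\]
Since $\bb T_n$ is finite and $\psi_t^n$ is $\mc C^1$ in $t$, the envelope $\Psi_n$ is locally Lipschitz, and at almost every $t$ one may select an extremising site $x^\ast$. If $\psi_t^n(x^\ast) > 0$, then $x^\ast$ is a spatial maximum of $\psi_t^n$, so $\Delta_n\psi_t^n(x^\ast) \leq 0$ and both terms in \eqref{EL} are non-positive; if instead $\psi_t^n(x^\ast) < 0$, then $x^\ast$ is a spatial minimum of $\psi_t^n$ so $\Delta_n\psi_t^n(x^\ast) \geq 0$, which contributes favourably to $-\tfrac{d}{dt}\psi_t^n(x^\ast) = \tfrac{d}{dt}\Psi_n(t)$, and the surviving quadratic piece is bounded above by $2(n^{-\alpha}+\bar u_n)\Psi_n(t)^2$ using Corollary \ref{c5.1} to replace $u_t^n(x^\ast)$ by $\bar u_n$. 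Integrating via $\tfrac{d}{dt}(1/\Psi_n) \geq -2(n^{-\alpha}+\bar u_n)$ yields $\Psi_n(t) \leq 2\Psi_n(0)$ exactly on $[0,T]$.

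The lemma then follows by writing $\tfrac{d}{dt}u_t^n(x) = \psi_t^n(x)/\phi_n'(u_t^n(x))$, applying the pointwise estimate $\phi_n'(u)^{-1} = (n^{-\alpha}+u)^2 \leq (n^{-\alpha}+\bar u_n)^2$ (valid again by Corollary \ref{c5.1}), and identifying $\Psi_n(0)$ with $\bar\Psi_n$. The only delicate step is the rigorous treatment of $\tfrac{d}{dt}\Psi_n$ at instants where the extremising index jumps; this is handled by the standard observation that a finite maximum of $\mc C^1$ functions has upper Dini derivative dominated by the derivatives at the active indices, so the sign analysis above propagates to $\Psi_n$ itself and the Bernoulli inequality closes.
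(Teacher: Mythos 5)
Your proposal is correct and follows essentially the same route as the paper: derive the evolution equation \eqref{EL} for $\psi_t^n$, obtain the Riccati-type inequality $\tfrac{d}{dt}\Psi_n \leq 2(n^{-\alpha}+\bar u_n)\Psi_n^2$ by a discrete maximum principle, integrate to get $\Psi_n(t) \leq 2\Psi_n(0)$ on $[0,T]$, and conclude from $\tfrac{d}{dt}u_t^n = \psi_t^n/\phi_n'(u_t^n)$ together with the weak maximum principle. Your explicit sign analysis at the extremising site and the Dini-derivative remark merely fill in details that the paper asserts without elaboration.
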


\begin{remark}
The time window can be improved making the constant $2$ bigger. However, this procedure will only make approach the constant $4$ in the time window to $1$. Therefore, there is no much gain on it, and we have chosen the form of the lemma stated here for simplicity.
\end{remark}

\begin{remark}
\label{energy2}($\psi$ is $\frac{1}{2}$-H\"older Continuous)
In the same spirit as before, considering the new functional of energy defined by
\[ 
\tilde{\mc E_n}(u) =    \sum_{x \in \bb T_n} n \big( \psi^n_t(x+1) - \psi^n_t(x)\big)^2,
\]
and using \eqref{EL}, we obtain
\begin{eqnarray*}
\tfrac{d}{dt}\tilde{\mc E_n}(u_t^n)
\leq  \frac 1n\sum_{x\in\bb T_n} \Big( \frac{\Delta_n \phi_t^n(x) }{n^{-\alpha} + u_t^n(x)}\Big)^4. 
\end{eqnarray*}

In particular, in view of Lemma \ref{l5.5}, the Laplacian above is bounded for any positive time $0<t< T$, which implies the existence of a constant $K$ such that
\[
\big| \psi^n_t(y) - \psi^n_t(x)\big| \leq K \Big| \frac{y-x}{n}\Big|^{1/2},
\]
for all $0<t<T$ and  $x,y \in \bb T_n$.
\end{remark}

\begin{remark}
If $u_0^n$ is the discretization of a smooth function $u$, then $\bar{\Psi}_n$ is of the order of
\[
\sup_{x \in \bb T} \big| \Delta u(x)\big|.
\]
Therefore, what this lemma is telling us, is that the discrete approximations of the hydrodynamic equation \eqref{echid} remain regular for a small time interval, whenever the initial condition $u_0$ is regular. The size of this time interval is inversely proportional to the supremum of both $u_0$ and $\Delta u_0$.
\end{remark}

\subsection*{The regular case}
Now we have all the ingredients needed in order to prove Theorem \ref{t2} in the case of a regular initial condition $u_0$ bounded away from zero, up to a positive time $T$. To be more precise, we state this as a lemma:

\begin{lemma}
\label{l5.6}
Let $u_0:\bb T \to [0,\infty)$ be such that $\sup_x |\Delta u_0| <+\infty$. Assume as well that $\inf_x u_0(x) =\epsilon >0$. Then there exists a strictly positive time $T$ such that
\[
\lim_{n \to \infty} \sup_{x \in \bb T_n} \sup_{0 \leq t \leq T} \Big| u_t^n(x) - u(t,\tfrac{x}{n})\Big| =0,
\]
where $\{u(t,x); t \geq 0, x \in \bb T\}$ is the solution of the hydrodynamic equation \eqref{echid} with initial condition  $u_0$.
\end{lemma}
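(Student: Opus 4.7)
My plan is to follow the standard compactness/identification/uniqueness scheme. First I would verify that the hypothesis $\sup_x|\Delta u_0|<\infty$ plus $u_0\geq\epsilon$ makes the quantities $\bar u_n=\max_x u_0^n(x)$ and $\bar\Psi_n=\max_x|\Delta_n\phi_0^n(x)|/\phi_n'(u_0^n(x))$ appearing in Lemma \ref{l5.5} bounded \emph{uniformly} in $n$: the first is controlled by $\|u_0\|_\infty$; for the second, $\phi_n'(u_0^n(x))^{-1}=(n^{-\alpha}+u_0^n(x))^2\leq (1+\|u_0\|_\infty)^2$, while $\phi_n(u)-n^\alpha=-n^\alpha/(1+n^\alpha u)\to -1/u$ with bounded discrete second derivative (a direct Taylor estimate using $\Delta u_0$). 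Consequently Lemma \ref{l5.5} yields a time $T>0$, \emph{independent of $n$}, and a finite constant $C$ such that $|\partial_t u_t^n(x)|\leq C$ on $[0,T]\times\bb T_n$. Combining this with the spatial Hölder-$\tfrac12$ bound on $\phi_n(u_t^n(\cdot))$ coming from Lemma \ref{l5.4} and the energy estimate Lemma \ref{l5.3}, and using that $\phi_n$ is uniformly bi-Lipschitz on $[\epsilon/2,\|u_0\|_\infty+1]$ on the range of $u_t^n$ (guaranteed by Corollary \ref{c5.1} which forces $u_t^n\in[\epsilon,\|u_0\|_\infty]$), I obtain that the piecewise affine interpolants $\tilde u^n\in C([0,T]\times\bb T)$ form an equicontinuous, uniformly bounded family.

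By Arzelà--Ascoli some subsequence $\tilde u^{n_k}$ converges uniformly to a continuous limit $u:[0,T]\times\bb T\to[\epsilon,\|u_0\|_\infty]$. To identify $u$, I would test the discrete equation \eqref{ec1.4.3} against a smooth function $F\in\mc C^2(\bb T)$: summation by parts gives
\[
\tfrac{1}{n}\sum_{x\in\bb T_n}u_t^n(x)F(\tfrac{x}{n}) \;=\; \tfrac{1}{n}\sum_{x\in\bb T_n}u_0^n(x)F(\tfrac{x}{n}) \;+\; \int_0^t\tfrac{1}{n}\sum_{x\in\bb T_n}\bigl[\phi_n(u_s^n(x))-n^\alpha\bigr]\Delta_n F(\tfrac{x}{n})\,ds,
\]
where I inserted the harmless constant $n^\alpha$ using $\sum_x\Delta_n F(\tfrac{x}{n})=0$. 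On the set $\{u_s^n(x)\geq\epsilon\}$ (which holds everywhere by Corollary \ref{c5.1}) the quantity $\phi_n(u_s^n(x))-n^\alpha=-n^\alpha(1+n^\alpha u_s^n(x))^{-1}$ converges to $-1/u(s,x)$ uniformly, and $\Delta_n F\to\Delta F$ uniformly. Passing to the limit yields
\[
\int_{\bb T}u(t,x)F(x)\,dx \;=\; \int_{\bb T}u_0(x)F(x)\,dx \;-\; \int_0^t\!\!\int_{\bb T}\tfrac{1}{u(s,x)}\Delta F(x)\,dx\,ds,
\]
which is precisely the weak formulation of $\partial_t u=-\Delta(u^{-1})$, i.e.\ of \eqref{echid}.

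For uniqueness I would exploit the lower bound $u\geq\epsilon$: write $v=1/u$, which satisfies $\partial_t v=v^2\Delta v-2v|\nabla v|^2$ in a weak sense, a quasilinear parabolic equation with smooth, uniformly elliptic principal part on the set where $v\in[1/\|u_0\|_\infty,1/\epsilon]$. Standard energy estimates (subtracting two such weak solutions with the same initial datum, using the bi-Lipschitz character of $v\mapsto 1/v$ on this interval, and applying Gronwall) give uniqueness in the class of bounded weak solutions that are bounded below by a positive constant. Since every subsequential limit lies in this class and satisfies \eqref{echid} with initial datum $u_0$, uniqueness forces all limit points to coincide with the unique classical solution $u$, and the entire sequence $\tilde u^n$ converges uniformly on $[0,T]\times\bb T$, which is exactly the statement of the lemma.

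The step I expect to be most delicate is the identification of the limit, specifically handling the cancellation $\phi_n(u)-n^\alpha\to-1/u$: the quantities $\phi_n(u_s^n(x))$ themselves are of order $n^\alpha$ and blow up, so the summation-by-parts trick together with the uniform lower bound $u_t^n(x)\geq\epsilon$ from the weak maximum principle are what make the passage to the limit rigorous. Without the lower bound $\epsilon>0$ on $u_0$, the limit $-1/u$ would be singular and the whole argument would collapse; this is exactly why the statement cannot dispense with that hypothesis.
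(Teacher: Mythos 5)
Your compactness and identification steps track the paper's proof of Lemma \ref{l5.6} closely: uniform bounds on $\bar u_n$ and $\bar\Psi_n$ give a time window $T$ independent of $n$ via Lemma \ref{l5.5}, the energy estimate gives uniform spatial H\"older-$\tfrac{1}{2}$ regularity, and Arzel\`a--Ascoli plus passage to the limit in the weak formulation identifies any limit point as a weak solution of \eqref{echid}. Your handling of the divergent constant --- inserting $n^\alpha$ via $\sum_x\Delta_n F(\tfrac{x}{n})=0$ and using the uniform lower bound $u^n_t\geq\epsilon$ to get $\phi_n(u^n_s)-n^\alpha\to -1/u$ uniformly --- is in fact more explicit than the paper's, which simply asserts that $\phi^n_t$ converges to ``$\phi(u_t)$'' in $\mc H_1$.

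The gap is in the uniqueness step. The paper keeps the equation in divergence form and applies Oleinik's method: it first upgrades the limit point to a \emph{finite-energy} weak solution (an element of $\mc H_{1}$, using weak compactness of balls in $\mc H_{1}$ supplied by the energy estimate), and then tests the difference of two such solutions against $g(t,x)=\int_t^T\big(\phi(u^2(s,x))-\phi(u^1(s,x))\big)ds$, which yields $\int_0^T\int(u^2-u^1)^2\,dx\,dt+\tfrac{1}{2}\mc E\big(\int_0^T(\phi(u^2_t)-\phi(u^1_t))dt\big)=0$ and hence uniqueness, with no regularity beyond $\mc H_{1}$ required. Your substitute, $v=1/u$ followed by ``standard energy estimates and Gronwall,'' does not work at this level of regularity: the equation for $v$ is $\partial_t v=v^2\Delta v=\nabla\cdot(v^2\nabla v)-2v|\nabla v|^2$ (your displayed equation conflates the non-divergence and divergence forms and is not correct as written), and the quadratic gradient term --- equivalently the non-divergence structure --- means that subtracting two solutions and pairing with their difference produces terms involving $|\nabla v_i|^2(v_1-v_2)$ that cannot be closed by Gronwall for solutions known only to be bounded with $\nabla v\in L^2$; one would need $\nabla v\in L^\infty$, which your compactness argument does not provide. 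Either adopt the Oleinik dual test function (which also requires you to record explicitly that the limit point has finite energy, a fact you currently use only implicitly) or supply a genuinely complete uniqueness proof in the class of bounded, bounded-below, finite-energy weak solutions.
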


\begin{proof}
Since $\Delta u_0$ is bounded, $u_0'$ is Lipschitz and therefore continuous. Since $u_0$ is also bounded away from $0$,  the energies $\mc E_n(u_0^n)$ are bounded in $n$, which implies that the discrete functions $\{\phi_t^n; t \geq 0, n \in \bb N\}$ are uniformly $\frac{1}{2}$-H\"older in space. Since $u_0$ is bounded above, by the weak maximum principle $u_t$ is bounded above by the same constant. Since $\phi_n'$ is bounded below on finite intervals, we conclude that the functions $\{u_t^n; t \geq 0, n \in \bb N\}$ are also uniformly $\frac{1}{2}$-H\"older in space. In time these functions are uniformly Lipschitz over a non-degenerate time interval $[0,T]$, which depends on the upper bound for $\Delta u_0$ and $u_0$. In particular, the family of space-time functions $\{u_t^n; n \in \bb N\}$\footnote{
As usual in numerical analysis, we will identify $u_t^n$ with the corresponding linear interpolation defined in $\bb T$ and we will not make any distinction between them.
}
is equicontinuous in $n$. Therefore, we have just proved the relative compactness of $\{u_t^n; n \in \bb N\}$ with respect to the uniform topology on $[0,T] \times \bb T$.

Let $n'$ be a converging subsequence and let $u_t=\{u(t,x); t \in [0,T], x \in \bb T\}$ be the corresponding limit point. In $\mc C^1([0,T] \times \bb T )$ consider the Sobolev norm
\[
\|f\|_{1,T} = \Big( \int_0^{T} \int_{\bb T} \big(f(t,x)^2+\partial_x f(t,x)^2\big) dx dt \Big)^{1/2}
\]
and let $\mc H_{1}$ be the completion of $\mc C^1(\bb T)$ under this norm. The Sobolev space $\mc H_{1}$ is a Hilbert space and in particular the closed balls in $\mc H_{1}$ are compact with respect to the weak topology on $\mc H_{1}$. Notice that the energy estimate implies the relative compactness of $\{u_t^n; n \in \bb N\}$ in $\mc H_{1}$. Taking a further subsequence if needed, we can assume that $u_t^n$ and $\phi_t^n$ converge in $\mc H_1$ to $u_t$ and $\phi(u_t)$ respectively. Multiplying equation \eqref{ec1.4.3} by a test function $g$, performing the usual integration-by-parts trick and taking limits along the subsequence $n'$, we conclude that
\[
\int u(T,x) g(T,x) dx = \int u_0(x) g(0,x) dx + \int_0^{T} \int \big(u(t,x)\partial_t g(t,x) + \phi(u(t,x)) \Delta g(t,x) \big) dx dt
\]
for any test function $g$. Since $u_t$ belongs to $\mc H_1$, we can undo one of the spatial integration by parts in this expression to conclude that
\begin{multline*}
\int u(T,x) g(T,x) dx = \int u_0(x) g(0,x) dx + \int_0^{T} \int \big(u(t,x)\partial_t g(t,x) -  \\
		-\partial_x \phi(u(t,x)) \partial_x g(t,x) \big) dx dt
\end{multline*}
for any $g \in \mc H_1$. Up to this point what we have accomplished is to show that the limit point $u_t$ is a weak solution with {\em finite energy} of \eqref{echid}, that is, a weak solution that belongs to $\mc H_1$. Uniqueness of energy solutions follows from Oleinik's method. Let us briefly explain this method. Let $u^i=\{u^i(t,x); t \in [0,T], x \in \bb T\}$, $i=1,2$ be two weak solutions with finite energy of the hydrodynamic equation \eqref{echid} with the same initial condition $u_0$. Taking as a test function
\[
g(t,x) = \int_t^{T} \big( \phi(u^2(s,x)) - \phi(u^1(s,x)\big)ds,
\]
we see that 
\[
\int_0^{T} \int \big(u^2(t,x) -u^1(t,x) \big)^2 dx dt + \tfrac{1}{2} \mc E\Big( \int_0^{T} \big(\phi(u^2_t)-\phi(u^1_t)\big)dt \Big)=0,
\]
where we have used the notation $\mc E(f) = \int f'(x)^2 dx$. Uniqueness follows at once. 
\end{proof}
 
\section{Comments and generalizations}

\label{comments}

Our paper can be considered as an initial effort to solve the challenging problem of obtain the fast diffusion equations:
\begin{equation}
\label{A} 
\left\{
\begin{array}{l}
{\displaystyle \partial_t \rho \; =\; \Delta \rho^\gamma  \;\;\;in\;\;\; \mathbb R^n\times [0,T)} \\
{\displaystyle \rho(0,\cdot) \;=\; \rho_0 (\cdot) \;\;\;on\;\;\; \mathbb R^n},
\end{array}
\right.
\end{equation}
in the range of exponents $\gamma < 1$, as scaling limit of interacting particle systems. In particular, we are interested in understand the phenomena of instantaneous and finite time extinction from a microscopical point of view. Characterizations of the phenomena mentioned above for solutions of \eqref{A} in terms of growth conditions on $u_0$ can be found in \cite[and references therein]{dk, Pel,Vaz}. 

One of the main properties of the zero-range process used in the proof of Theorem \ref{t3} is a sharp bound for the largest eigenvalue of the dynamics restricted to a finite box. In \cite{Nag}, the spectral gap inequality for the zero-range process with interaction rate $g(k) = k^\gamma$ for $\gamma \in (0, 1)$ was derived. Therefore, it is reasonable to consider using techniques similar to those in the previous sections to obtain \eqref{A}, for $\gamma \in (0,1)$, as a scaling limits of zero-range processes.  
  Nevertheless, to obtain the equivalence of ensembles (which in the present setting reduce to the explicit formula \eqref{equivensem}) and concentration inequalities turns to be more demanding in this latter case.
  The new difficulties are due to the fact that, in contrast to the case $g(k) = \mathbf 1_{\{k>0\}}$, there exists no formula in terms of elementary functions to express  the partition function associated to the zero-range process with rate function $g(k) = k^\gamma$. The proof of this scaling limits, under periodic conditions, will be the subject of a forthcoming paper.

\section*{Acklowledgements}
F.H.~would like to thank the warm hospitality of the "Universidade Federal do Espirito Santo" where part of this work was done.
F.H. ~ and  F.V.~  thanks CNPq for its support through the grant 473742/2013-6.

\end{document}